\DeclareFontFamily{U}{rsfs}{%
\skewchar\font127}
\DeclareFontShape{U}{rsfs}{m}{n}{%
<-6>rsfs5<6-8.5>rsfs7<8.5->rsfs10}{}
\DeclareSymbolFont{rsfs}{U}{rsfs}{m}{n}
\DeclareRobustCommand*\rsfs{%
\@fontswitch\relax\mathrsfs}
\theoremstyle{plain}
\newtheorem{thm}{Theorem}[section]
\newtheorem{prop}[thm]{Proposition}
\newtheorem{lem}[thm]{Lemma}
\newtheorem{prop-defi}[thm]{Proposition-Definition}
\newtheorem{thm-defi}[thm]{Theorem-Definition}
\newtheorem{lem-defi}[thm]{Lemma-Definition}
\newtheorem{conj}[thm]{Conjecture}
\newtheorem{exam}[thm]{Example}
\newdimen\argwidth
\def\db[#1\db]{
 \setbox0=\hbox{$#1$}\argwidth=\wd0
 \setbox0=\hbox{$\left[\box0\right]$}
  \advance\argwidth by -\wd0
 \left[\kern.3\argwidth\box0 \kern.3\argwidth\right]}
\newcommand{\cC}{\mathcal{C}}
\newcommand{\eE}{\mathcal{E}}
\newcommand{\fF}{\mathcal{F}}
\newcommand{\hH}{\mathcal{H}}
\newcommand{\iI}{\mathcal{I}}
\newcommand{\lL}{\mathcal{L}}
\newcommand{\oO}{\mathcal{O}}
\newcommand{\dR}{\mathbf{R}}
\newcommand{\dL}{\mathbf{L}}
\newcommand{\Pic}{\mathop{\rm Pic}\nolimits}
\newcommand{\id}{\textrm{id}}
\newcommand{\ch}{\mathop{\rm ch}\nolimits}
\newcommand{\Ext}{\mathop{\rm Ext}\nolimits}
\newcommand{\Coh}{\mathop{\rm Coh}\nolimits}
\newcommand{\cneq}{\mathrel{\raise.095ex\hbox{:}\mkern-4.2mu=}}
\newcommand{\eqcn}{\mathrel{=\mkern-4.5mu\raise.095ex\hbox{:}}}
\newcommand{\DT}{\mathop{\rm DT}\nolimits}
\newcommand{\RHom}{\mathop{\dR\mathrm{Hom}}\nolimits}
\title[{GV type invariants on CY 4-folds via 
descendent insertions}]
{Gopakumar-Vafa type invariants on Calabi-Yau 4-folds \\
via descendent insertions}
\date{}
\author{Yalong Cao}
\address{Kavli Institute for the Physics and Mathematics of the Universe (WPI),The University of Tokyo Institutes for Advanced Study, The University of Tokyo, Kashiwa, Chiba 277-8583, Japan}
\email{yalong.cao@ipmu.jp}
\author{Yukinobu Toda}
\address{Kavli Institute for the Physics and Mathematics of the Universe (WPI),The University of Tokyo Institutes for Advanced Study, The University of Tokyo, Kashiwa, Chiba 277-8583, Japan}
\email{yukinobu.toda@ipmu.jp}
\begin{document}
\maketitle
\begin{abstract}
The Gopakumar-Vafa type invariants on Calabi-Yau 4-folds
(which are non-trivial only for genus zero and one)
are defined by Klemm-Pandharipande from 
Gromov-Witten theory,
and their integrality is conjectured. 
In a previous work of Cao-Maulik-Toda, $\DT_4$ invariants with primary insertions on moduli spaces of 
one dimensional stable sheaves are used to give a sheaf theoretical interpretation of the genus zero GV type invariants.  
In this paper, we propose a sheaf theoretical interpretation of the genus one 
GV type invariants using descendent insertions on the above moduli spaces. 
The conjectural formula in particular implies nontrivial constraints on genus zero GV type (equivalently GW) invariants of CY 4-folds 
which can be proved by the WDVV equation. 
\end{abstract}
%\tableofcontents

%${}$ \\
%\textbf{Keywords}: Gopakumar-Vafa type invariants, Calabi-Yau 4-folds, descendent insertions, WDVV equation

%${}$ \\
%\textbf{MSC 2010}: 14N35, 14J32

\section{Introduction}
\subsection{Background on GV invariants of CY 3-folds}\label{sect on GV on 3-folds}
Let $Y$ be a smooth projective Calabi-Yau 3-fold over $\mathbb{C}$.
For each $g \in \mathbb{Z}_{\geqslant 0}$ and $\beta \in H_2(Y, \mathbb{Z})$, 
the Gromov-Witten invariant 
\begin{align*}
\mathrm{GW}_{g, \beta} =\int_{[\overline{M}_g(Y, \beta)]^{\rm{vir}}}
1 \in \mathbb{Q}
\end{align*}
 enumerates 
stable maps $f \colon C \to Y$ with 
$f_{\ast}[C]=\beta$ and $g(C)=g$. 
Because of multiple cover phenomena,
the invariants $\mathrm{GW}_{g, \beta}$ are not necessarily integers, 
so may not be regarded as ideal curve counting. 
However if we write the generating series of GW invariants as 
\begin{align}\label{intro:GV3}
\sum_{\beta>0, g \geqslant 0}
\mathrm{GW}_{g, \beta}\lambda^{2g-2}t^{\beta}=
\sum_{\beta>0, g\geqslant 0, k\geqslant 1}\frac{n_{g, \beta}}{k}
\left( 2\sin\left( \frac{k\lambda}{2} \right) \right)^{2g-2} t^{k\beta},
\end{align}
for some $n_{g, \beta} \in \mathbb{Q}$
(called \textit{Gopakumar-Vafa invariants}),  
then $n_{g, \beta}$ are expected to be 
integers. 
The above integrality conjecture for $n_{g, \beta}$ 
is proved by Ionel-Parkar~\cite{IP} via symplectic geometry. 

Furthermore there should be a sheaf theoretical interpretation 
of the integrality of $n_{g,\beta}$. 
From the perspective 
of Gopakumar-Vafa~\cite{GV} on type IIA and M-theory duality, 
the invariants $n_{g, \beta}$ are 
expected to be related to characters of  
$\mathfrak{sl}_2 \times \mathfrak{sl}_2$-actions on 
some cohomology theories of 
moduli spaces $M_1(Y, \beta)$
of one dimensional stable sheaves $F$ on 
$Y$ with $[F]=\beta$ and $\chi(F)=1$. 
For genus zero, 
Katz~\cite{Katz} 
conjectured that the invariant $n_{0, \beta}$ 
equals to 
the degree of the virtual fundamental class of 
$M_1(Y, \beta)$, which is 
a special case 
of Donaldson-Thomas invariants. 
For higher genus, 
Maulik-Toda~\cite{MT} conjectured the following identity
(which itself modifies earlier approaches by Hosono-Saito-Takahashi~\cite{HST} and 
Kiem-Li~\cite{KL}):
\begin{align}\label{intro:MT}
\sum_{i \in \mathbb{Z}} \chi(^{p} \hH^i (\dR \pi_{\beta\ast}\phi_M))y^i
=\sum_{g\geqslant 0} n_{g, \beta}(y^{\frac{1}{2}}+y^{-\frac{1}{2}})^{2g}. 
\end{align}
Here $\phi_{M}$ is a perverse sheaf on $M_{1}(Y, \beta)$
locally isomorphic to vanishing cycle sheaves of local Chern-Simons 
functions, 
$\pi_{\beta} \colon M_1(Y, \beta) \to \mathrm{Chow}_{\beta}(Y)$ is 
the Hilbert-Chow map, and $^{p}\hH^i(-)$ is the $i$-th perverse cohomology.
See~\cite[Section~7.1]{MT} for
 the relationship of the formula (\ref{intro:GV3})
with $\mathfrak{sl}_2 \times \mathfrak{sl}_2$-action. 
The conjectural equality~\eqref{intro:MT} is widely open (see ~\cite[Section~9]{MT}).

\subsection{GV type invariants of CY 4-folds}
In what follows, we discuss similar stories 
for Calabi-Yau 4-folds. 
Let $X$ be a smooth projective Calabi-Yau 4-fold\footnote{In this paper, a Calabi-Yau 4-fold is a complex smooth projective 4-fold $X$ satisfying $K_X \cong \oO_X$.}. 
As an analogy of 
GV invariants on Calabi-Yau 3-folds defined by (\ref{intro:GV3}), 
Klemm-Pandharipande \cite{KP} defined \textit{Gopakumar-Vafa type invariants}
on $X$:
\begin{align}\label{intro GV type invs}n_{0,\beta}(\gamma)\in \mathbb{Q}, \quad n_{1,\beta}\in \mathbb{Q}, \end{align}
from corresponding primary GW invariants (see Section \ref{section on GV/GW} for details). 
Here an insertion $\gamma\in H^4(X,\mathbb{Z})$ is needed in the definition of genus zero invariants, and  
the $g\geqslant 2$ invariants are defined to be zero
because the virtual dimensions of corresponding GW moduli
spaces are negative.
The GV type invariants (\ref{intro GV type invs})
are conjectured to be integers and verified in many examples from computations of GW invariants \cite{KP}.
The genus zero integrality conjecture has been proved by Ionel-Parker \cite[Theorem 9.2]{IP} using symplectic geometry.

Motivated by the 
conjectural identity (\ref{intro:MT})
for CY 3-folds, it is desirable to
give a sheaf theoretical interpretation of GV type 
invariants (\ref{intro GV type invs}) for CY 4-folds 
in terms of moduli spaces $M_1(X, \beta)$
of one dimensional stable sheaves $F$ on $X$
with $[F]=\beta$ and $\chi(F)=1$. 

For $\gamma \in H^4(X, \mathbb{Z})$, let 
$\tau_0(\gamma)$ be the primary insertion:
\begin{align*}  
\tau_0(\gamma):=(\pi_{M})_{\ast}(\pi_X^{\ast}\alpha \cup\ch_{3}(\mathbb{F})) \in H^2(M_1(X, \beta), \mathbb{Z}).
\end{align*}
Here
$\mathbb{F}$ is a universal 
sheaf, and $\pi_X$, $\pi_M$ are the projections from 
$X \times M_1(X, \beta)$ onto corresponding factors. 
In~\cite{CMT1},  
Maulik and the authors  
proposed 
the following conjecture,
which gives a sheaf theoretical interpretation 
of genus zero GV type invariants on CY 4-folds
and could be regarded as an analogue of Katz conjecture~\cite{Katz}.
 
\begin{conj}\label{intro g=0 one dim sheaf conj}\emph{(\cite[Conjecture 0.2]{CMT1})}
For $\gamma \in H^4(X, \mathbb{Z})$, let 
$\langle\tau_0(\gamma) \rangle_{\beta}$
be the $\DT_4$ invariant with primary insertions on 
$M_1(X, \beta)$, i.e.
\begin{align*}
\langle \tau_0(\gamma)\rangle_{\beta} := 
\int_{[M_1(X, \beta)]^{\rm{vir}}}\tau_0(\gamma). 
\end{align*}
Then for a certain choice of an orientation on $M_1(X, \beta)$, we have 
\begin{align}\label{conj:g=0:GV} \langle\tau_0(\gamma) \rangle_{\beta}=n_{0,\beta}(\gamma). \end{align}
Here $n_{0,\beta}(\gamma)$ is the g=0 Gopakumar-Vafa type invariant \eqref{intro GV type invs}.
\end{conj}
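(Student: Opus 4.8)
The plan is to reduce the equality \eqref{conj:g=0:GV} to a comparison on explicitly understood geometries, using that both sides are deformation invariant. On the right, $n_{0,\beta}(\gamma)$ is unchanged under deformations of $X$ carrying $\gamma$ along as a flat section of $H^4$, because the genus-zero primary GW invariants from which Klemm--Pandharipande build it are. On the left, $[M_1(X,\beta)]^{\mathrm{vir}}$ and the class $\tau_0(\gamma)$ vary in families with $X$ provided one transports the orientation data coherently along the path; checking that such a coherent choice exists is precisely what should single out the ``certain choice of orientation'' in the statement, and I would treat this as a mandatory first step (building on the existence of orientations for $M_1(X,\beta)$). Granting it, it suffices to verify \eqref{conj:g=0:GV} for one convenient representative in each deformation class -- products $Y\times E$ with $Y$ a CY $3$-fold and $E$ elliptic, total spaces $\mathrm{Tot}_S(L_1\oplus L_2)$ over a surface $S$ with $L_1\otimes L_2\cong K_S$, cotangent bundles $T^\ast S$, and $K3\times K3$.

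I would then dispose of the ``primitive, irreducible'' case. Suppose $\beta$ is primitive and such that every effective curve in class $\beta$ is a smooth connected curve of genus $h$, moving in a smooth family $\mathcal{C}\to B$ with $B=\mathrm{Chow}_\beta(X)$; then a stable sheaf in $M_1(X,\beta)$ is a degree-$h$ line bundle on such a curve, so $M_1(X,\beta)=\mathrm{Pic}^h(\mathcal{C}/B)$ is a smooth abelian fibration over $B$. Here the $\DT_4$ obstruction theory is explicit -- at $[\iota_\ast L]$ it is governed by $H^\ast(C,\Lambda^\bullet N_{C/X})$, hence by $\mathbf{R}\pi_\ast$ of data built from the universal curve and its rank-$3$ normal bundle -- so $[M_1(X,\beta)]^{\mathrm{vir}}$ is a half-Euler class of the obstruction bundle (with its natural quadratic form), and $\langle\tau_0(\gamma)\rangle_\beta$ becomes an integral over $\mathrm{Pic}^h(\mathcal{C}/B)$ that, after pushing down along the Hilbert--Chow map $\pi_\beta$ just as in \eqref{intro:MT}, collapses to an integral over $B$ weighted by the $\gamma$-insertion. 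I would match this against the Klemm--Pandharipande definition of $n_{0,\beta}(\gamma)$, which under the same hypothesis localizes the genus-zero GW contribution of irreducible class-$\beta$ maps onto the same family of curves; agreement on this stratum is the CY $4$-fold analogue of the genus-zero case of \cite{Katz,MT}.

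\textbf{The main obstacle} is the contribution of imprimitive classes $d\beta$ with $d\geqslant 2$, and more generally of sheaves supported on reducible or non-reduced curves. On the GW side these are organized by the Klemm--Pandharipande multiple-cover formula, so the conjecture demands that $\langle\tau_0(\gamma)\rangle_{d\beta}$ split into local contributions of $M_1(X,d\beta)$ near thickenings of primitive curves reproducing exactly those coefficients. Extracting these requires controlling the local $(-2)$-shifted symplectic (Kuranishi-type) structure of $M_1(X,d\beta)$ along such loci and computing the resulting virtual contribution -- the same difficulty that keeps \eqref{intro:MT} open for CY $3$-folds -- and there is at present no general GW/$\DT_4$ correspondence for CY $4$-folds to appeal to. I expect this to force one of two fallbacks: restrict to geometries in which imprimitive classes either do not occur or are accessible by torus localization, or prove a new wall-crossing statement identifying the $\DT_4$ multiple-cover term with the Klemm--Pandharipande one.

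Finally I would test the resulting identity against the constraints it must satisfy: dimensional reduction for $X=Y\times E$, where $M_1(X,\beta)$ should relate back to $M_1(Y,\beta)$ and \eqref{conj:g=0:GV} degenerate to the genus-zero part of \eqref{intro:MT}; the WDVV relations among the genus-zero GW invariants of $X$ that the conjecture implies; and the explicit evaluations of \cite{KP} and \cite{CMT1}. Agreement across all of these is strong evidence for \eqref{conj:g=0:GV}, and in the cases where the multiple-cover contributions can actually be computed it yields a proof of the conjecture for those $X$.
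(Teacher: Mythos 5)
There is no proof to compare against here: the statement you were asked to prove is Conjecture~\ref{intro g=0 one dim sheaf conj}, which is exactly \cite[Conjecture 0.2]{CMT1} restated. The paper does not prove it --- it is taken as an open conjectural input (verified only in examples in \cite{CMT1,CaoFano}) on which the new genus-one Conjecture~\ref{intro main conj} is modelled. Your text is accordingly a research programme rather than a proof, and you say as much yourself in the paragraph labelled ``the main obstacle'': the contribution of imprimitive classes and of sheaves on reducible or non-reduced supports is precisely where any attempt currently stops, since there is no GW/$\DT_4$ correspondence or multiple-cover formula on the sheaf side to invoke. Naming that gap does not close it.

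Two further steps in your outline are also not available in the current state of the theory, and you should not present them as routine. First, deformation invariance of the Borisov--Joyce class $[M_1(X,\beta)]^{\rm{vir}}$, together with a coherent transport of orientations along a deformation of $X$, is not established in the generality you need; the class is a priori non-algebraic, and the orientation set is only known to be a torsor under $H^0(M_{\omega}(v),\mathbb{Z}_2)$, with no canonical trivialization in families. Reducing to ``one convenient representative in each deformation class'' also presumes that every polarized CY 4-fold deforms to one of your model geometries, which is false. Second, even in the primitive, everywhere-smooth-curve case, identifying the half-Euler-class integral over $\mathrm{Pic}^h(\mathcal{C}/B)$ with the Klemm--Pandharipande $n_{0,\beta}(\gamma)$ is exactly the kind of ideal-geometry computation carried out heuristically in \cite{CMT1} (and in Section~\ref{heuristic argue} of this paper for the genus-one analogue); it motivates the conjecture but is not a proof even on that stratum, because genuine moduli spaces need not satisfy the ideal hypotheses. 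What your plan would genuinely deliver, if executed, is further evidence in the specific geometries where everything is computable --- which is the status the conjecture already has.
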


Here $[M_1(X, \beta)]^{\rm{vir}}\in H_2(M_1(X, \beta),\mathbb{Z})$
is the $\DT_4$-virtual class of $M_1(X, \beta)$, determined by a choice of an orientation \cite{CGJ}. 
$\DT_4$-virtual classes on moduli spaces of stable sheaves 
on CY 4-folds are constructed in general by 
Borisov-Joyce~\cite{BJ} via derived
algebraic/$C^{\infty}$-geometry, and in some special cases by Cao-Leung~\cite{CL1}
via gauge theory and classical algebraic geometry (see Section \ref{subsec:DT4} for a brief review). 
We refer to~\cite{CGJ, CK1, CK2, CKM, CKM2, CL2, CMT2, CT19, CT20a, CT20b}
for some recent developments of such theory. 
As we discuss below, the main purpose of this paper is 
to use $\DT_4$-theory together with 
descendent insertions to give a conjectural sheaf theoretical
interpretation of genus one GV type invariants. 

%Let $M_1(X, \beta)$ be the moduli scheme of 
%one dimensional stable sheaves $F$ on $X$ with $[F]=\beta\in H_2(X,\mathbb{Z})$ %and $\chi(F)=1$. 
%For integral classes $\gamma \in H^{4}(X, \mathbb{Z})$, consider insertions: 
%\begin{align*} 
%\tau_0 \colon H^{4}(X, \mathbb{Z})\to H^{2}(M_1(X,\beta), \mathbb{Z}), \quad 
%\tau_0(\gamma):=(\pi_{M})_{\ast}(\pi_X^{\ast}\gamma \cup\ch_{3}(\mathbb{F}) ),
%\end{align*}
%where $\pi_X$, $\pi_M$ are projections from $X \times M_1(X,\beta)$
%to corresponding factors, $\mathbb{F}$ is a universal sheaf. 
%We define $\DT_4$ invariant with primary insertions on $M_1(X,\beta)$:
%\begin{align*}\langle\tau_0(\gamma) \rangle_{\beta}:=\int_{[M_{1}(X,\beta)]^{\r%m{vir}}}\tau_0(\gamma), \end{align*}
%which depends on the choice of orientations for the virtual class \cite{BJ, CGJ}.

\subsection{Main conjecture}
As we explained in Section~\ref{sect on GV on 3-folds}, 
in order to give a sheaf theoretical interpretation of 
higher genus GV invariants on CY 3-folds, 
the theory of perverse 
sheaves of vanishing cycles plays a key role.
However, such a theory is not feasible on CY 4-folds as moduli spaces of sheaves are no longer locally written as critical locus of functions on Zariski tangent spaces. 

Our proposal is to use descendent $\DT_4$ invariants to capture 
 $g=1$ GV type invariants on CY 4-folds.
%Firstly note that universal sheaf $\mathbb{F}$ of $M_1(X,\beta)$ is unique only% up to a twist of line bundles on $M_1(X,\beta)$.
%To fix this ambiguity, we define 
%\begin{align*}\mathbb{F}_{\mathrm{norm}}:=\mathbb{F}\otimes\pi_M^*(\det(\pi_{M*%}\mathbb{F}))^{-1}, \end{align*}
%which is independent of the choice of $\mathbb{F}$ and whose pushforward to $M_%1(X,\beta)$ has trivial determinant. 
%Let $\mathbb{F}_{\mathrm{norm}}$ be the normalized 
%universal sheaf (see~(\ref{norm univ sheaf}) for its definiton)
%\begin{align*}
%\mathbb{F}_{\mathrm{norm}} \in \Coh(X \times M_1(X, \beta)). 
%\end{align*}
%Let $\pi_X$, $\pi_M$ be the projections from 
%$X \times M_1(X, \beta)$ onto corresponding factors. 
For $\alpha\in H^2(X,\mathbb{Z})$, the descendent insertion is 
defined by 
\begin{align*}  
\tau_1(\alpha):=(\pi_{M})_{\ast}(\pi_X^{\ast}\alpha \cup\ch_{4}(\mathbb{F}) ) \in H^2(M_1(X, \beta), \mathbb{Q}).
\end{align*}
A subtlety here is that a universal sheaf 
$\mathbb{F}$ is not unique, and 
$\ch_4(\mathbb{F})$ may depend on its choice. 
We choose $\mathbb{F}$ to be the normalized one \eqref{norm univ sheaf}
in the sense that the determinant of 
$\dR \pi_{M\ast}\mathbb{F}$ is trivial. 
The descendent $\DT_4$ invariants
are defined by 
\begin{align*} %\label{intro dt4 for one dim stable}
\langle\tau_1(\alpha) \rangle_{\beta}:=\int_{[M_{1}(X,\beta)]^{\rm{vir}}}\tau_1(\alpha). \end{align*}
The following is our main conjecture of this paper: 
\begin{conj}\emph{(Conjecture \ref{main conj})} \label{intro main conj}
For a certain choice of an orientation on $M_1(X, \beta)$, 
there is an equality of functions  
of $\alpha \in H^2(X)$
\begin{align}\label{conj:GV:g=1}
\langle\tau_1(\alpha) \rangle_{\beta}=\frac{n_{0,\beta}(\alpha^2)}{2\,(\alpha\cdot\beta)}
-\sum_{\beta_1+\beta_2=\beta}\frac{(\alpha\cdot\beta_1)(\alpha\cdot\beta_2)}{4\,(\alpha\cdot\beta)}m_{\beta_1,\beta_2}
-\sum_{k\geqslant1,\,k|\beta}\frac{(\alpha\cdot\beta)}{k}\,n_{1,\beta/k}.
 \end{align}
Here $n_{0,\beta}(-)$ and $n_{1,\beta}$ are genus 0 and 1 GV type invariants \eqref{intro GV type invs} and $m_{\beta_1,\beta_2}$ are meeting invariants which can be inductively determined by genus 0 GV type invariants \emph{(see \eqref{meeting invs})}.
\end{conj}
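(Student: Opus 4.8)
A complete proof of Conjecture \ref{intro main conj} would require a Gromov-Witten/$\DT_4$ correspondence for one dimensional sheaves, which is not presently available, so the realistic plan is fourfold. First, I would extract from \eqref{conj:GV:g=1} the constraint on genus zero data that is \emph{forced} by the fact that its left hand side is a linear function of $\alpha$, and prove that constraint unconditionally via the WDVV equations for $X$. Second, granting Conjecture \ref{intro g=0 one dim sheaf conj}, I would show that \eqref{conj:GV:g=1} is equivalent to a single ``descendent-to-primary'' relation among $\DT_4$ invariants, thereby isolating the one genuinely new geometric input. Third, I would test \eqref{conj:GV:g=1} directly in families of examples where both sides are computable. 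Fourth, I would read off the sheaf-theoretic $n_{1,\beta}$ so produced and compare its expected integrality with Klemm-Pandharipande.

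\textbf{Step 1: the linearity constraint, and WDVV.} Since $\tau_1(\alpha)=(\pi_{M})_{\ast}(\pi_X^{\ast}\alpha \cup\ch_{4}(\mathbb{F}))$ is linear in $\alpha$, so is $\langle\tau_1(\alpha)\rangle_{\beta}$. On the right hand side of \eqref{conj:GV:g=1} the last sum is already linear in $\alpha$, while the first two terms are each a homogeneous quadratic in $\alpha$ divided by the linear form $\alpha\mapsto\alpha\cdot\beta$. Hence the conjecture entails that the quadratic form
\[
Q_{\beta}(\alpha)\;:=\;2\,n_{0,\beta}(\alpha^2)\;-\;\sum_{\beta_1+\beta_2=\beta}(\alpha\cdot\beta_1)(\alpha\cdot\beta_2)\,m_{\beta_1,\beta_2}
\]
on $H^2(X,\mathbb{Q})$ is divisible by $\alpha\cdot\beta$, equivalently that $Q_{\beta}$ vanishes on the hyperplane $\{\alpha\cdot\beta=0\}$. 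This is exactly the ``nontrivial constraint'' advertised in the abstract, and after substituting the Klemm-Pandharipande expressions for $n_{0,\beta}(-)$ and the inductive definition \eqref{meeting invs} of the meeting invariants it becomes a pure statement about genus zero Gromov-Witten invariants of $X$. The plan is to prove it by applying the four-point WDVV relation on $X$ with two divisor insertions dual to $\alpha$ together with point insertions: the divisor axiom produces the factors $(\alpha\cdot\beta_i)$ on splittings $\beta=\beta_1+\beta_2$, the fundamental class axiom eliminates the unwanted boundary strata, and the hypothesis $\alpha\cdot\beta=0$ is precisely what lets the remaining unsplit term $n_{0,\beta}(\alpha^2)$ reorganize into the meeting-invariant sum. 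The delicate part is the bookkeeping: unwinding the multiple-cover corrections hidden inside $n_{0,\beta}(-)$ and inside \eqref{meeting invs} so that the recursion closes.

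\textbf{Step 2: reduction, modulo Conjecture \ref{intro g=0 one dim sheaf conj}.} Multiplying \eqref{conj:GV:g=1} by $\alpha\cdot\beta$ and using $\langle\tau_0(\gamma)\rangle_{\beta}=n_{0,\beta}(\gamma)$ from Conjecture \ref{intro g=0 one dim sheaf conj}, the conjecture becomes equivalent to an identity of $\DT_4$ invariants of the schematic shape
\[
(\alpha\cdot\beta)\,\langle\tau_1(\alpha)\rangle_{\beta}\;=\;\tfrac12\,\langle\tau_0(\alpha^2)\rangle_{\beta}\;+\;(\text{meeting correction})\;-\;(\alpha\cdot\beta)^2\!\!\sum_{k\geqslant1,\,k\mid\beta}\tfrac1k\,(\text{genus one tail}),
\]
where the correction terms are themselves $\DT_4$-type integrals over the moduli spaces $M_1(X,\beta_i)$ attached to splittings and to sub-classes $\beta/k$. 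I would try to establish this one relation sheaf-theoretically: restrict the normalized universal sheaf to $X$ times a smooth very ample divisor $D$ with $[D]$ Poincar\'e dual to $\alpha$, which meets a curve in class $\beta$ in $\alpha\cdot\beta$ points and is the source of the prefactor $\frac1{\alpha\cdot\beta}$, and then analyze how $M_1(X,\beta)$ degenerates as these curves collide or split, which is where the meeting and genus one terms should enter. Isolating and proving this relation — the $\DT_4$ analogue of the divisor equation, but not a formal consequence of it — is, I expect, the principal obstacle.

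\textbf{Step 3: examples and integrality.} In parallel I would verify \eqref{conj:GV:g=1} on Calabi-Yau $4$-folds where $M_1(X,\beta)$ is explicit: total spaces $\mathrm{Tot}\big(N\to C\big)$ of rank three bundles $N$ on curves $C$ with $\det N\cong K_C$ (in particular local $\mathbb{P}^1=\mathrm{Tot}\big(\oO(-1)^{\oplus2}\oplus\oO\to\mathbb{P}^1\big)$), products $E\times Y$ with $Y$ a Calabi-Yau $3$-fold, and products $S_1\times S_2$ of surfaces; in these cases both the $\DT_4$ side and the Klemm-Pandharipande side are accessible and the comparison also fixes the correct orientation. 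Finally, once the right hand side of \eqref{conj:GV:g=1} is known to be linear in $\alpha$ (Step 1), solving for $n_{1,\beta}$ realizes it as a $\mathbb{Z}$-linear combination of $\DT_4$ invariants, which is the sheaf-theoretic refinement one wants of the conjectural integrality of $n_{1,\beta}$.
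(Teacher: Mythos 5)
The statement you are addressing is a conjecture, and the paper does not prove it; its support consists of (a) a heuristic derivation of the formula in an ideal $\mathrm{CY}_4$ geometry (Section \ref{heuristic argue}), (b) a proof that the right hand side is a well-defined linear function of $\alpha$ (Theorem \ref{constraint holds}), and (c) verifications in examples (Section \ref{sect on examples}). Your plan overlaps substantially with (b) and (c), but it omits (a), which is where the formula and its coefficients actually come from.

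Your Step 1 is exactly Theorem \ref{constraint holds}, and your description of the mechanism is close to the paper's: the paper first solves the recursion \eqref{meeting invs} for the meeting invariants (Lemma \ref{lem:GV0}, via a Euclidean-algorithm argument over coprime pairs $(k_1,k_2)$) so as to rewrite \eqref{intro constraint} purely in terms of $n_{0,\beta}(-)$, then uses the multiple cover formula to convert it into the GW identity of Proposition \ref{prop:GW}, and finally obtains that identity by taking the $(i,j,k,l)=(1,1,2,2)$ component of the WDVV equation for the genus zero potential and setting $\alpha=(S_1\cdot\beta)S_2-(S_2\cdot\beta)S_1$. The ``delicate bookkeeping'' you flag is precisely Lemma \ref{lem:GV0}, so your instinct about where the difficulty sits is correct.

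Your Step 2, however, has no counterpart in the paper and, as stated, cannot substitute for the missing ingredient. The coefficients $\tfrac{1}{2}$ and $\tfrac{1}{4}$ and the arithmetic tail $\sum_{k|\beta}\tfrac{1}{k}$ in \eqref{conj:GV:g=1} are produced in the paper by an explicit computation in the ideal geometry: for a one-dimensional family $\pi\colon D_\beta\to M_\beta$ of rational curves one has $\ch_4(j_*\oO_{D_\beta})=-\tfrac{1}{2}c_1(\omega_\pi)$ by Grothendieck--Riemann--Roch, and intersection theory on the blow-up of a $\mathbb{P}^1$-bundle together with $\psi^2=-\tfrac{1}{2}\sum_{\beta_1+\beta_2=\beta}m_{\beta_1,\beta_2}$ from \cite{KP} yields the first two terms; a super-rigid elliptic curve $E$ with $[E]=\beta/r$ contributes $(\alpha\cdot\beta)/r$ through $M_E(r,1)\cong E$ and the $K$-theory class of its normalized universal sheaf. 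Without this computation one cannot even write down the right hand side, let alone reduce the conjecture to a single ``descendent-to-primary'' $\DT_4$ relation. Note also that the paper explicitly cannot fix the sign of the genus one tail from the heuristic---it is determined experimentally from the examples---so any such reduction must also track orientations. A smaller but real issue: $\tau_1(\alpha)$ must be defined with the normalized universal sheaf \eqref{norm univ sheaf}, since $\ch_4(\mathbb{F})$, unlike $\ch_3(\mathbb{F})$, depends on the choice of $\mathbb{F}$; your Step 2 sketch ignores this. Finally, on Step 3: the paper's checks are the Weierstrass elliptic fibration over $\mathbb{P}^3$ in fiber classes, $Y\times E$, and the local surfaces and local $\mathbb{P}^3$ with the preferred orientation \eqref{prefer choice}; of your proposed list only $Y\times E$ coincides, and your $S_1\times S_2$ and local-curve families would be genuinely new verifications worth carrying out.
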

%\begin{YC}
%We should not say in the above conj that choice of ori is the same as on Conj 1%. Because in the ideal case, primary insertions did not see g=1 curves. 
%\end{YC}
Note that 
by the induction on the divisibility of $\beta$
together with the conjectural 
identity (\ref{conj:g=0:GV}), the
formula (\ref{conj:GV:g=1})
in principle 
determines $n_{1, \beta}$ 
in terms of 
primary $\DT_4$-invariants
$\langle \tau_0(\gamma) \rangle_{\beta}$
and descendent $\DT_4$-invariants 
$\langle \tau_1(\alpha)\rangle_{\beta}$. 
Therefore it gives a sheaf theoretical interpretation of 
$n_{1, \beta}$ from $M_1(X, \beta)$, in a different flavor
from (\ref{intro:MT}). 

Also note that the LHS of the formula (\ref{conj:GV:g=1}) is linear on 
$\alpha$, but the RHS is a priori rational function on $\alpha$. 
In order to make sense of the formula (\ref{conj:GV:g=1}), 
we need to show the absence of pole
in the RHS, which requires the following constraint 
\begin{align}\label{intro constraint}
n_{0,\beta}(\alpha^2)=\frac{1}{2}\sum_{\begin{subarray}{c}\beta_1+\beta_2=\beta  \\ \beta_1, \beta_2>0 \end{subarray}}(\alpha\cdot\beta_1)(\alpha\cdot\beta_2)\, m_{\beta_1,\beta_2}, 
\end{align}
for any $\alpha \in H^2(X)$ with $\alpha \cdot \beta=0$. This equality can be equivalently written 
using GW invariants (Proposition \ref{prop:GW}) and gives nontrivial constraints on GW invariants of Calabi-Yau 4-folds whose Picard numbers are bigger than one (Example \ref{exam on constrain}).
We prove that the above equality can be derived from the WDVV equation \cite{W,DVV}. 
\begin{thm}\emph{(Theorem \ref{constraint holds})}\label{intro constraint holds}
The identity \eqref{intro constraint} holds for any Calabi-Yau 4-fold.
In particular, the formula in Conjecture \ref{intro main conj} makes 
sense as an identity of linear functions on $\alpha\in H^2(X)$.
\end{thm}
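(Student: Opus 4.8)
The plan is to translate the constraint \eqref{intro constraint}, which is phrased in terms of the genus-zero GV type invariants $n_{0,\beta}(-)$ and the meeting invariants $m_{\beta_1,\beta_2}$, entirely into a statement about genus-zero Gromov-Witten invariants, and then to recognize the resulting identity as a consequence of the WDVV equations. First I would recall the definition of $n_{0,\beta}(\gamma)$ from the Klemm-Pandharipande construction in terms of $\mathrm{GW}_{0,\beta'}(\gamma)$ (the Section \ref{section on GV/GW} material), invert the multiple-cover relation to write $n_{0,\beta}(\gamma)$ as a sum over divisors $k \mid \beta$ of $\mathrm{GW}_{0,\beta/k}(\gamma)$ with the appropriate $\mu$-type coefficients, and likewise unwind the inductive definition \eqref{meeting invs} of $m_{\beta_1,\beta_2}$ in terms of genus-zero GW (or GV) data. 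Substituting these into \eqref{intro constraint} for a class $\alpha$ with $\alpha\cdot\beta=0$ should produce, after reorganizing the sums over partitions $\beta_1+\beta_2=\beta$ and over common divisors, an identity purely among genus-zero four-point GW invariants with two divisor insertions $\alpha$ and two point-like insertions — this is the content promised by Proposition \ref{prop:GW}, which I would state and prove as the first real step.

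The second step is the WDVV input. The genus-zero four-point function on a CY $4$-fold satisfies the associativity/WDVV relation $\sum \langle \gamma_1 \gamma_2 T_e\rangle g^{ef} \langle T_f \gamma_3 \gamma_4\rangle = \sum \langle \gamma_1 \gamma_3 T_e\rangle g^{ef}\langle T_f \gamma_2 \gamma_4\rangle$ for the quantum product, where $\{T_e\}$ runs over a basis of $H^*(X)$ and $g^{ef}$ is the inverse Poincaré pairing. I would apply this with a carefully chosen quadruple of insertions: taking $\gamma_1 = \gamma_2 = \alpha \in H^2(X)$ and $\gamma_3, \gamma_4$ point classes (or more precisely, insertions arranged so the virtual dimension count on the CY $4$-fold, where $\overline{M}_{0,n}(X,\beta)$ has virtual dimension $n+1$, is matched — note the divisor axiom will contract the two $\alpha$ insertions and produce the factors $(\alpha\cdot\beta_i)$ that appear on the right-hand side of \eqref{intro constraint}). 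Because $\alpha\cdot\beta=0$, the "diagonal" terms in which all of $\beta$ flows through one vertex drop out or combine, and the intermediate classes $T_e, T_f$ that survive the dimension constraint are exactly a point class and its dual, which is what converts the WDVV identity into the meeting-invariant sum. Matching the combinatorial coefficients $\tfrac{1}{2}$ and $(\alpha\cdot\beta_1)(\alpha\cdot\beta_2)$ is then a bookkeeping exercise using the divisor axiom and the symmetry $\beta_1\leftrightarrow\beta_2$.

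The last step is to feed Proposition \ref{prop:GW} and the WDVV identity together: the GW form of \eqref{intro constraint} is literally the specialization of WDVV just described, so the constraint holds for every CY $4$-fold; then, as noted in the statement, the absence of poles on the right-hand side of \eqref{conj:GV:g=1} follows because the potential pole along $\alpha\cdot\beta=0$ has vanishing residue, and hence \eqref{conj:GV:g=1} is an identity of honest linear functions of $\alpha\in H^2(X)$. I expect the main obstacle to be the second step — choosing the right quadruple of WDVV insertions and controlling which intermediate cohomology classes contribute. The dimension constraint on a fourfold is delicate: one must ensure that the only surviving splitting classes are the point class (paired with the fundamental class) so that the node-gluing reproduces the product $\mathrm{GW}_{0,\beta_1}(\cdot)\,\mathrm{GW}_{0,\beta_2}(\mathrm{pt})$ structure underlying $m_{\beta_1,\beta_2}$, and one must correctly handle the boundary partitions with $\beta_i = 0$ and the multiple-cover corrections relating GW to GV. A secondary but purely mechanical difficulty is carrying the Klemm-Pandharipande multiple-cover inversion and the recursion defining $m_{\beta_1,\beta_2}$ through the substitution without sign or factor errors; this is routine but must be done carefully to land exactly on \eqref{intro constraint}.
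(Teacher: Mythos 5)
Your overall architecture coincides with the paper's: first convert \eqref{intro constraint} into a pure genus-zero Gromov--Witten identity (this is Lemma~\ref{lem:GV0}, which unwinds the meeting-invariant recursion into a sum over coprime pairs via a Euclidean-algorithm argument, followed by Proposition~\ref{prop:GW}, which removes the multiple-cover factors), and then derive that GW identity from WDVV. The first and third steps of your plan are sound. The gap is in the WDVV specialization, precisely the point you flagged as delicate. Your proposed quadruple $\gamma_1=\gamma_2=\alpha$, $\gamma_3=\gamma_4=[\mathrm{pt}]$ cannot work: on a Calabi--Yau $4$-fold $\overline{M}_{0,3}(X,\beta)$ has virtual dimension $4$, so a three-point function containing a point insertion (codimension $4$) vanishes for $\beta\neq 0$ unless the remaining insertions are fundamental classes, which the fundamental class axiom forbids. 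Hence the factors $\Phi_{f\gamma_3\gamma_4}$ in your WDVV equation carry no quantum corrections and the identity degenerates. A second, related problem: the left-hand side of \eqref{constraint:2} is $\mathrm{GW}_{0,\beta}(\alpha^2)$, a \emph{one-pointed} invariant with the $H^4$-class $\alpha\cup\alpha$. The divisor axiom applied to two $\alpha$-insertions produces $(\alpha\cdot\beta)^2\,\mathrm{GW}_{0,\beta}(\,\cdot\,)$, which is identically zero in the regime $\alpha\cdot\beta=0$ you are working in, so this term cannot arise from contracting divisor insertions as you suggest.

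The paper's choice is four divisor insertions: take $i=j=1$, $k=l=2$ in the WDVV equation for the potential $\Phi=\frac{1}{6}\int_X\gamma^3+(\text{quantum part})$, with $S_1,S_2\in H^2(X)$. The term $\mathrm{GW}_{0,\beta}(\alpha^2)$ then appears from the \emph{cross terms} between the classical cubic piece $\int_X S_iS_jS_a$ and the quantum piece of the other factor, since $\sum_{a,b}\left(\int_X S_1^2S_a\right)g^{ab}\,\mathrm{GW}_{0,\beta}(S_b)=\mathrm{GW}_{0,\beta}(S_1^2)$; the quantum--quantum terms give the convolution over $\beta_1+\beta_2=\beta$ with intermediate classes constrained to $H^4$ by the dimension count, reproducing the meeting-invariant structure. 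The specific $\alpha$ is $(S_1\cdot\beta)S_2-(S_2\cdot\beta)S_1$, which is automatically orthogonal to $\beta$, and general $\alpha$ with $\alpha\cdot\beta=0$ are obtained by varying the divisor indices. If you replace your insertion choice by this one, the rest of your outline goes through.
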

Our Conjecture \ref{intro main conj} 
is written down using a heuristic argument in an ideal $\mathrm{CY}_4$ geometry discussed in Section \ref{heuristic argue}.
At the moment, we are not able to fix the signs in front of genus one GV type invariants solely from the heuristic argument.
The minus sign is in fact experimentally obtained by explicit computations of several examples in Section \ref{sect on examples}.

\subsection{Verifications of Conjecture~\ref{intro main conj} in examples}
We verify our main conjecture 
in several examples.  
%And in all these cases, we can see the choices of 
%orientations which make Conjecture \ref{intro main conj} hold coincide with tho%se choices for Conjecture \ref{intro g=0 one dim sheaf conj}.
The first example is an elliptic fibered CY 4-fold
given by a Weierstrass model. 
\begin{thm}\emph{(Theorem \ref{thm:elliptic})}
Let $\pi \colon  X \to \mathbb{P}^3$ be an elliptic fibered Calabi-Yau 4-fold \eqref{elliptic CY4}
given by a Weierstrass model,
 and $f=\pi^{-1}(p)$ be a generic fiber.
Then Conjecture \ref{intro main conj} holds for $\beta=r[f]$ with $r\geqslant1$.
\end{thm}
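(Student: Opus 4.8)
The plan is to establish Conjecture \ref{intro main conj} for $\beta = r[f]$ by computing both sides of \eqref{conj:GV:g=1} explicitly in this elliptic geometry and matching them. The starting point is to identify the moduli space $M_1(X, r[f])$: a one-dimensional stable sheaf $F$ with $[F] = r[f]$ and $\chi(F) = 1$ is supported on fibers of $\pi$, and since $f$ is a smooth elliptic curve for generic $p$, one expects (by work on relative moduli for Weierstrass fibrations, in the spirit of Friedman--Morgan--Witten and the spectral construction) that $M_1(X, r[f])$ fibers over $\mathbb{P}^3$ with fiber over $p$ being a moduli space of stable sheaves on the elliptic curve $f$; for $\chi = 1$ and degree $r$ this should be a single point when $\gcd$-type conditions hold, giving $M_1(X, r[f]) \cong \mathbb{P}^3$ (possibly after accounting for the singular fibers in the Weierstrass model, which may need a separate argument or a genericity hypothesis). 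First I would pin down this identification, describe the universal sheaf $\mathbb{F}$, normalize it so that $\det(\dR\pi_{M\ast}\mathbb{F})$ is trivial, and compute $\ch(\mathbb{F})$ via Grothendieck--Riemann--Roch on $X \times \mathbb{P}^3$.

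Next I would compute the $\DT_4$ virtual class and the descendent invariant $\langle \tau_1(\alpha) \rangle_{r[f]}$. Since $M_1(X, r[f])$ is smooth of the expected shape, the virtual class should be (up to the orientation sign) the Euler class of an obstruction bundle $\Obs$ with $\Obs \cong \Ext^2_{\pi_M}(\mathbb{F}, \mathbb{F})$, which by Serre duality on the CY$_4$ is self-dual, so one uses the real/half-Euler-class formalism of Cao--Leung or Oh--Thomas. I would then reduce $\int_{[M_1(X,r[f])]^{\rm vir}} \tau_1(\alpha)$ to an integral over $\mathbb{P}^3$ of a product of Chern classes, evaluate it, and compare with the right-hand side of \eqref{conj:GV:g=1}. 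For the right-hand side I need the genus $0$ and genus $1$ GV type invariants $n_{0, r[f]}(\gamma)$ and $n_{1, r[f]}$ for the Weierstrass model, together with the meeting invariants $m_{\beta_1, \beta_2}$; these are known from the Gromov--Witten side via the work of Klemm--Pandharipande and subsequent computations for elliptic CY$_4$'s (the fiber classes are governed by the elliptic fibration structure, e.g. $n_{1, r[f]}$ relates to $\chi(\mathbb{P}^3)$ or an Euler characteristic of the base, and $n_{0, r[f]}(\gamma)$ is likewise controlled by intersection numbers on $\mathbb{P}^3$). I would also invoke Theorem \ref{intro constraint holds} to ensure the right-hand side is a well-defined linear function of $\alpha$, which is needed since $\alpha \cdot [f]$ can vanish.

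The main obstacle, I expect, is twofold. First, getting the moduli space identification exactly right: the singular (nodal and cuspidal) fibers of the Weierstrass model mean $M_1(X, r[f])$ is not literally a moduli of sheaves on a smooth elliptic curve over all of $\mathbb{P}^3$, and one must check either that these loci do not contribute (e.g. because the relevant sheaves remain stable and the relative moduli space stays smooth and isomorphic to $\mathbb{P}^3$, as happens for rank-type reasons), or handle their contribution to the virtual class separately. Second, fixing the orientation/sign: as the authors note, the overall sign in front of $n_{1, \beta/k}$ is only determined experimentally, so the computation must be carried out carefully enough to confirm that the canonical (or a specified) orientation reproduces the minus sign in \eqref{conj:GV:g=1}, rather than its opposite. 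A subsidiary technical point is the GRR bookkeeping for the normalized universal sheaf — making sure $\ch_4(\mathbb{F})$ is computed with the correct normalization, since $\tau_1$ depends on this choice — but this is routine once the geometry is set up.

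Once both sides are expressed as explicit linear functions of $\alpha \in H^2(X)$ in terms of the intersection form on $X$ (with $X$'s cohomology generated over fiber classes and the hyperplane pullback from $\mathbb{P}^3$), the identity \eqref{conj:GV:g=1} becomes a finite check, and summing over divisors $k \mid r$ matches the $\sum_{k \geqslant 1,\, k \mid \beta}$ term on the right. I would organize the proof as: (i) identify $M_1(X, r[f])$ and its universal sheaf; (ii) compute the virtual class and $\langle \tau_1(\alpha) \rangle_{r[f]}$; (iii) recall/compute $n_{0, r[f]}(-)$, $n_{1, r[f]}$, and $m_{\beta_1, \beta_2}$; (iv) verify the equality and hence the conjecture for all $r \geqslant 1$.
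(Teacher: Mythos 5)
There is a genuine gap at the very first step, and it propagates through the whole plan. You identify the fiber of $M_1(X,r[f])\to\mathbb{P}^3$ over a point $p$ with smooth fiber $f_p$ as ``a single point when $\gcd$-type conditions hold,'' and conclude $M_1(X,r[f])\cong\mathbb{P}^3$. This is incorrect: a one-dimensional stable sheaf $F$ with $[F]=r[f]$ and $\chi(F)=1$ supported on $f_p$ is a rank $r$, degree $1$ stable bundle on the elliptic curve $f_p$, and the moduli space $M_{f_p}(r,1)$ is isomorphic to $f_p$ itself (via the determinant map to $\Pic^1(f_p)\cong f_p$), not a point --- it would be a point only after fixing the determinant. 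Consequently the correct identification, which the paper takes from \cite[Lemma~2.1]{CMT1} (Lemma~\ref{lem:fib:isom}), is $M_1(X,r[f])\cong X$, with virtual class $\pm\,\mathrm{PD}(c_3(X))\in H_2(X,\mathbb{Z})$. All of your subsequent integrals would be set up on the wrong space, so the computation of $\langle\tau_1(\alpha)\rangle_{r[f]}$ cannot go through as written.

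A second, related gap is that you treat the description of the normalized universal sheaf and the GRR computation of $\ch_4(\mathbb{F}_{\rm norm})$ as ``routine once the geometry is set up.'' For $r>1$ this is the technical heart of the paper's proof: the universal family is \emph{not} simply a structure sheaf of a universal curve, and the paper constructs it as the shifted derived dual of $(\Phi^{\times r}\boxtimes\id_X)(\oO_{\Delta_X})$, where $\Phi$ is the relative spherical twist along $\oO_X$ for the fibration (Lemmas~\ref{lem:spherical}--\ref{lem:Fnorm}); only then can one extract the $K$-theory class, normalize it, and compute $\ch_4$. Without some such explicit handle on $\mathbb{F}_{\rm norm}$ for higher rank, step (ii) of your outline has no content. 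The remaining ingredients of your plan --- invoking Theorem~\ref{intro constraint holds}, computing the meeting numbers $m_{r_1[f],r_2[f]}$, and pulling $n_{0,r[f]}(\gamma)$ and $n_{1,r[f]}$ from Klemm--Pandharipande --- do match what the paper does, but they only become a proof after the two issues above are repaired.
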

The proof of this case relies on a careful description of the normalized universal sheaf by Fourier-Mukai transforms 
(see Lemma \ref{lem:spherical}$-$\ref{lem:Ktheory}).

The next example is the product of a CY 3-fold and an elliptic curve. 
%(see Lemma \ref{lem:spherical}, \ref{lem:family}, \ref{unnorm univ}, \ref{lem:Ktheory}).
\begin{prop}\emph{(Proposition \ref{prop on product})}
Let $X=Y\times E$ be the product of a smooth projective Calabi-Yau 3-fold $Y$ with an elliptic curve $E$. Then 
Conjecture \ref{intro main conj} holds in the following cases
\begin{itemize}
\item $\beta=r[E]$ with $r\geqslant 1$, 
\item any $\beta\in H_2(Y,\mathbb{Z})\subset H_2(X,\mathbb{Z})$.
\end{itemize}
\end{prop}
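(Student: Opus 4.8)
The plan is to treat the two classes separately, in each case reducing everything to explicit geometry of $Y$ and of the elliptic curve $E$.

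\medskip
\noindent\textbf{The class $\beta=r[E]$.} A one dimensional stable sheaf $F$ on $X=Y\times E$ with $[F]=r[E]$ has connected support contracted by the projection to $Y$, hence contained set-theoretically in a single fibre $\{y\}\times E$; moreover $\ch(F)$ is then forced to be $r[\{y\}\times E]+[\{y\}\times\mathrm{pt}_E]$ (independent of $F$). Applying the Fourier--Mukai transform $\Phi=\mathrm{id}_Y\times\FM_E$ (as in the analysis of the Weierstrass model) turns such sheaves into one dimensional sheaves on $Y\times\widehat E$ supported on \emph{reduced} fibres of the primitive class $[\widehat E]$, which are just line bundles on those fibres; this yields an isomorphism $M_1(X,r[E])\cong Y\times\Pic^{d}(\widehat E)\cong Y\times E$ together with an explicit description of a universal sheaf $\mathbb F$ as the $\Phi^{-1}$--image of a (twisted) Poincar\'e bundle. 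Requiring $\det\dR\pi_{M\ast}\mathbb F$ trivial pins down the remaining twist. Since the normal bundle of a fibre $\{y\}\times E\subset X$ is trivial of rank $3$, Grothendieck--Riemann--Roch is free of Todd corrections, so $\ch_4(\mathbb F)$ is read off directly from $\ch(\mathbb F)$.

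\medskip
\noindent On the deformation side, using the Koszul resolution of the fibre one gets $\Ext^1_X(F,F)\cong T_yY\oplus\mathbb C$ and $\Ext^2_X(F,F)\cong T_yY\oplus\Lambda^2T_yY$, with the Serre pairing making the latter a hyperbolic quadratic space; globally $\mathcal Ext^2\cong p_Y^{\ast}(T_Y\oplus\Omega_Y)$, so for the appropriate orientation the $\DT_4$--virtual class is $[M_1(X,r[E])]^{\mathrm{vir}}=-\chi(Y)\,[\{y\}\times E]$. Combining this with the computation of $\tau_1(\alpha)$ from $\ch_4(\mathbb F)$ gives $\langle\tau_1(\alpha)\rangle_{r[E]}=-\chi(Y)\,(\alpha\cdot[E])\,e_r$, where $e_r$ is the degree of the normalised universal bundle of $M_E(r,1)$ restricted to $\{\mathrm{pt}\}\times M_E(r,1)$. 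For the right hand side: there are no non-constant genus zero stable maps into an elliptic fibre, so $n_{0,s[E]}(\gamma)=0$ for all $s$ and $\gamma$ (hence $n_{0,s[E]}(c_2(X))=0$), which also forces the meeting invariants $m_{s_1[E],s_2[E]}$ to vanish; thus the right hand side of Conjecture~\ref{intro main conj} collapses to $-(\alpha\cdot[E])\sum_{d\mid r}d\,n_{1,d[E]}$. Since $\overline M_{1,0}(Y\times E,r[E])\cong Y\times\overline M_{1,0}(E,r)$ and $\overline M_{1,0}(E,r)$ is the reduced, $0$--dimensional space of degree $r$ \'etale covers of $E$ (each with automorphisms of order $r$), a direct obstruction count — the $E$--directions contribute nothing and the $Y$--directions contribute $e(p_Y^{\ast}T_Y)$ — gives $\mathrm{GW}_{1,r[E]}=\chi(Y)\,\sigma_1(r)/r$, and the Klemm--Pandharipande genus one formula then yields $\sum_{d\mid r}d\,n_{1,d[E]}=\sigma_1(r)\,\chi(Y)$. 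Comparing the two sides, the conjecture for $\beta=r[E]$ is reduced to the single identity $e_r=\sigma_1(r)$, which I would establish by tracking $\ch(\mathbb F)$ through the Fourier--Mukai transform.

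\medskip
\noindent\textbf{The class $\beta\in H_2(Y)$.} Here $F$ is supported in a fibre $Y\times\{e\}$, so $M_1(X,\beta)\cong M_1(Y,\beta)\times E$; the relevant embedding has trivial codimension one normal bundle and $\Ext^\bullet_X(F,F)\cong\Ext^\bullet_Y(G,G)\oplus\Ext^{\bullet-1}_Y(G,G)$, the $\Ext^2$ pairing is again hyperbolic with isotropic half the $\mathrm{CY}_3$ obstruction sheaf, and $[M_1(X,\beta)]^{\mathrm{vir}}=[M_1(Y,\beta)]^{\mathrm{vir}}_{\DT_3}\times[E]$. Writing $\mathbb F=j_\ast\,\mathrm{pr}^{\ast}\mathbb G$ with $\mathbb G$ the normalised $\mathrm{CY}_3$ universal sheaf, one computes $\tau_1(\alpha)=\sigma(\alpha|_Y)\boxtimes1_E+(\alpha\cdot[E_{\mathrm{fib}}])\,(1\boxtimes[\mathrm{pt}_E])$ with $\sigma(\alpha|_Y)\in H^2(M_1(Y,\beta))$, whence $\langle\tau_1(\alpha)\rangle_\beta=(\alpha\cdot[E_{\mathrm{fib}}])\cdot\deg[M_1(Y,\beta)]^{\mathrm{vir}}$. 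On the other side $n_{1,\cdot}=0$ and $n_{0,\cdot}(c_2(X))=0$ in these classes, $n_{0,\beta}(\gamma)=(\gamma_{22}\cdot\beta)\,n_{0,\beta}^Y$ depends only on the $H^2(Y)\otimes H^2(E)$--part $\gamma_{22}$ of $\gamma$, and the meeting invariants vanish; the right hand side, made sense of as a linear function of $\alpha$ via Theorem~\ref{intro constraint holds}, equals $(\alpha\cdot[E_{\mathrm{fib}}])\,n_{0,\beta}^Y$. Thus the conjecture reduces to $\deg[M_1(Y,\beta)]^{\mathrm{vir}}=n_{0,\beta}^Y$, i.e. to Katz's conjecture (equivalently, to Conjecture~\ref{intro g=0 one dim sheaf conj}) for the $\mathrm{CY}_3$ $Y$, which we invoke.

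\medskip
\noindent\textbf{Main obstacle.} The delicate step is the explicit Fourier--Mukai description of the \emph{normalised} universal sheaf in the $r[E]$ case and the ensuing Chern class identity $e_r=\sigma_1(r)$, hand in hand with fixing the orientation so that the sign in front of the genus one invariants comes out exactly as in Conjecture~\ref{intro main conj}. By comparison, the identification of the moduli spaces, the factorisation of the $\DT_4$--virtual classes through the hyperbolic structure of $\Ext^2$, the computation of $\mathrm{GW}_{1,r[E]}$ via covers of $E$, and the vanishing of the meeting invariants are routine.
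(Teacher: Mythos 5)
Your part (2) follows the paper's route almost verbatim: identify $M_1(X,\beta)\cong M_1(Y,\beta)\times E$, factor the virtual class as $[M_1(Y,\beta)]^{\mathrm{vir}}\times[E]$, compute $\tau_1(\alpha)$ from $\ch_4(\mathbb{F}_Y\boxtimes\oO_{\Delta_E})=\ch_3(\mathbb{F}_Y)\cdot[\Delta_E]$ via Grothendieck--Riemann--Roch, and observe that the right hand side collapses to $(\int_E\alpha_2)\,n^Y_{0,\beta}$ because the genus one and meeting invariants vanish. Like you, the paper ends by matching $\deg[M_1(Y,\beta)]^{\mathrm{vir}}$ against the genus zero side; it routes this through the primary-insertion computation of \cite{CMT1} rather than naming Katz's conjecture explicitly, but the content of the final reduction is the same.

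For part (1) your architecture (moduli space $Y\times M_E(r,1)\cong Y\times E$ for every $r$, obstruction bundle $p_Y^{\ast}(T_Y\oplus\Omega_Y)$ with hyperbolic pairing, virtual class $\pm\chi(Y)\,[\{y\}\times E]$) is sound; indeed for $r>1$ it is the honest way to proceed, whereas the paper asserts $M_1(X,r[E])=\emptyset$ for $r>1$ and only computes the $r=1$ case directly. However, there is a concrete arithmetic error in your extraction of the genus one invariants. From $\mathrm{GW}_{1,r[E]}=\chi(Y)\sigma(r)/r$ and the Klemm--Pandharipande formula $\mathrm{GW}_{1,r[E]}=\sum_{d\mid r}n_{1,d[E]}\,\sigma(r/d)\,(r/d)^{-1}$ (all genus zero and meeting terms vanish here), one gets $n_{1,[E]}=\chi(Y)$ and $n_{1,d[E]}=0$ for $d>1$, hence
\begin{align*}
\sum_{d\mid r}d\,n_{1,d[E]}=\chi(Y),
\end{align*}
not $\sigma_1(r)\,\chi(Y)$. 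The right hand side of Conjecture \ref{main conj} is therefore $-(\alpha\cdot[E])\,\chi(Y)$ for every $r\geqslant 1$, and the identity you must establish is $e_r=1$, not $e_r=\sigma_1(r)$. Fortunately $e_r=1$ is exactly what the Fourier--Mukai description delivers: the $K$-theory class of the normalized universal sheaf over a fibre is $[j_{\ast}\oO_{E\times E}(\Delta)]+(r-1)[j_{\ast}\oO_{E\times E}]$ as in Section \ref{heuristic argue}, and $\ch_4(j_{\ast}\oO_{E\times E})=0$ because the normal bundle of the fibre has trivial determinant, so $\int_{M_E(r,1)}\tau_1(\alpha)=\alpha\cdot[E]$ independently of $r$. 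If you pursue the target $e_r=\sigma_1(r)$ you will run into a contradiction; once the multiple-cover bookkeeping is corrected, the two sides match with no further work.
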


The final examples are 
non-compact ones for small degree curve classes.
\begin{thm}\emph{(Proposition \ref{prop on local P2}, \ref{prop on local P1 product}, \ref{local Fano 3})}\label{intro:locurve}
Conjecture \ref{intro main conj} holds in the following cases: 
\begin{itemize}
\item $X=\mathrm{Tot}_{\mathbb{P}^2}(\oO(-1)\oplus \oO(-2))$ and $\beta=d\,[\mathbb{P}^1]\in H_2(X,\mathbb{Z})$ with $d\leqslant 3$.
\item $X=\mathrm{Tot}_{\mathbb{P}^1\times \mathbb{P}^1}(\oO(-1,-1)\oplus \oO(-1,-1))$ and $\beta=(d_1,d_2)\in H_2(X,\mathbb{Z})$
with $d_1,d_2\leqslant 2$.
\item $X=K_{\mathbb{P}^3}$ and $\beta=d\,[\mathbb{P}^1]\in H_2(X,\mathbb{Z})$ with $d\leqslant 3$.
\end{itemize}
\end{thm}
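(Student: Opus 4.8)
The plan is to verify \eqref{conj:GV:g=1} for each of the three non-compact toric Calabi-Yau $4$-folds $X$ and each curve class $\beta$ in the list by computing the two sides independently and matching them as linear functions of $\alpha\in H^2(X)$. The right-hand side is the more accessible one. The genus $0$ and genus $1$ Gopakumar-Vafa type invariants $n_{0,\beta}(-)$ and $n_{1,\beta}$ of these local geometries are extracted from the (equivariant) local Gromov-Witten invariants, which are available in closed form from \cite{KP} and are recalled in Section~\ref{section on GV/GW}; the meeting invariants $m_{\beta_1,\beta_2}$ are then produced from the $n_{0,\beta}(-)$ via the recursion \eqref{meeting invs}. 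For curve classes of divisibility one ($d=1$ on local $\mathbb{P}^2$ and on $K_{\mathbb{P}^3}$, and $(d_1,d_2)$ with $\gcd(d_1,d_2)=1$ on local $\mathbb{P}^1\times\mathbb{P}^1$) the last sum in \eqref{conj:GV:g=1} collapses to the single term $(\alpha\cdot\beta)\,n_{1,\beta}$, and the first two terms involve only strictly smaller effective classes; for the divisible classes in the list one additionally includes the $k>1$ terms, already known from the smaller degrees. Theorem~\ref{intro constraint holds} guarantees that the resulting expression has no pole, so the RHS is a well-defined explicit linear function of $\alpha$.

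For the left-hand side I would first describe $M_1(X,\beta)$, the moduli space of one-dimensional stable sheaves with $[F]=\beta$ and $\chi(F)=1$. For the minimal classes, a multiplicity-one stable sheaf is $\oO_C$ for $C$ a line in the zero section $\mathbb{P}^2$ (resp.\ $\mathbb{P}^3$), since the relevant normal directions restrict to $C\cong\mathbb{P}^1$ as sums of negative line bundles with no global sections, so $C$ cannot move off the zero section; hence $M_1(X,\beta)$ is $\mathbb{P}^2$, a $\mathbb{P}^1$, or the Grassmannian of lines in $\mathbb{P}^3$ --- a smooth projective variety --- and the $\DT_4$ virtual class, of virtual dimension one, is the square-root Euler class of the (self-dual) obstruction bundle, which can be written down from the normal bundle of $C$. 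For $d=2,3$ (and for $(1,1),(2,0),(0,2),(2,2)$) the moduli space is no longer smooth of the expected dimension; here I would use the Calabi-Yau torus on $X$ --- a subtorus of the toric torus preserving the holomorphic $4$-form --- and compute $\langle\tau_1(\alpha)\rangle_\beta$ by $\DT_4$ virtual localization \cite{CKM}. This means enumerating the torus-fixed loci (scheme-theoretic thickenings supported on the finitely many torus-fixed lines, which for these small degrees are isolated points or copies of $\mathbb{P}^1$), computing their virtual normal bundles and the associated square-root Euler classes, and integrating the restriction of the descendent class $\tau_1(\alpha)$ --- built from the normalized universal sheaf \eqref{norm univ sheaf} --- over each component.

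The main obstacle is the dependence of the $\DT_4$ virtual class, and hence of every localization contribution, on a choice of orientation: these signs must be fixed coherently across all torus-fixed loci so that the total agrees with \eqref{conj:GV:g=1}. This is exactly the point flagged after Theorem~\ref{intro constraint holds} --- the overall minus sign in front of the genus one invariants is only pinned down experimentally. In practice one normalizes the orientation data of \cite{CGJ} so that the $d=1$ computation reproduces $n_{1,\beta}$, and then propagates that choice; verifying that the propagated signs stay consistent, and carrying out the more involved localization integrals for $d=3$ and for $(2,2)$ on local $\mathbb{P}^1\times\mathbb{P}^1$ --- where fixed components can be positive-dimensional with non-trivial obstruction bundles, so that one must take equivariant square roots of $K$-theory classes with the correct sign and push forward along $\mathbb{P}^1$ --- is the technical heart of the computation. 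Once both sides are assembled for each $(X,\beta)$ in the list, the conjecture follows by a direct comparison of the two linear functions of $\alpha$, i.e.\ by evaluating on the natural generators of $H^2(X)$.
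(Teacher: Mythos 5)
Your treatment of the right-hand side (extracting $n_{0,\beta}$, $n_{1,\beta}$ from \cite{KP} and running the recursion for the meeting invariants) matches the paper. The left-hand side is where the proposal diverges and where it has a real gap. Your premise that for $d=2,3$ (and for $(2,2)$) the moduli space ``is no longer smooth of the expected dimension'' and that one must therefore localize is only half right: these moduli spaces fail to have the expected dimension but they \emph{are} smooth projective varieties. Indeed $M_1(\mathbb{P}^2,3)$ is the universal cubic $\cC\subset\mathbb{P}^9\times\mathbb{P}^2$, $M_1(\mathbb{P}^1\times\mathbb{P}^1,(2,2))$ is the universal $(1,2,2)$-divisor in $\mathbb{P}^8\times\mathbb{P}^1\times\mathbb{P}^1$, and $M_1(\mathbb{P}^3,2)$ is a projective bundle over $|\oO_{\mathbb{P}^3}(1)|$, each identified with $M_1(X,\beta)$ by pushforward along the zero section. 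The paper exploits this: the $\DT_4$ class is the Euler class of an explicit obstruction bundle on a smooth space, the $K$-theory class of the normalized universal sheaf is written down from a Beilinson-type resolution of the diagonal, and $\langle\tau_1(\alpha)\rangle_\beta$ becomes a classical intersection number. No equivariant machinery is needed, and the orientation is fixed globally by the single sign $(-1)^{c_1(Y)\cdot\beta-1}$ of \eqref{prefer choice} rather than by a fixed-point-by-fixed-point bookkeeping.

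The gap in your route is that a $\DT_4$ virtual localization formula for moduli spaces of one-dimensional stable sheaves is not available: the vertex formalism of \cite{CKM} that you invoke is set up for Hilbert schemes and stable pairs, not for $M_1(X,\beta)$, and the Borisov--Joyce class is not constructed in a way that currently admits a torus localization theorem with controlled signs. Your plan to ``normalize the orientation at $d=1$ and propagate'' also does not make sense as stated, since the $M_1(X,\beta)$ for different $\beta$ are disjoint spaces and the conjecture only asserts the existence of \emph{some} orientation for each $\beta$; the genuine sign problem is the relative sign between fixed loci within one localization computation, which is exactly what is unresolved. The one place the paper does use the stable-pair vertex is $d=3$ on $K_{\mathbb{P}^3}$, and there it first proves the nontrivial identification $P_1(X,3)\cong M_1(X,3)$ (every stable sheaf in class $3[\mathbb{P}^1]$ with $\chi=1$ admits a unique section with zero-dimensional cokernel, by \cite{FT}), which is the step that legitimizes transporting the descendent computation to a moduli space where localization is actually defined. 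Without that identification, or without restricting to the globally smooth models, your computation of the left-hand side cannot be carried out rigorously.
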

A common feature of the examples in Theorem~\ref{intro:locurve}
is that the CY 4-fold $X$ can be written as the total 
space of canonical bundle $K_Y$ of a (possibly non-compact) Fano 3-fold $Y$, and there is an isomorphism
\begin{align*}
i_{\ast} \colon M_1(Y, \beta) \stackrel{\cong}{\to} M_1(X,\beta),
\end{align*}
where $i \colon Y \hookrightarrow X$ is the zero section. 
%to the moduli space $M_1(Y, \beta)$ of one dimensional stable sheaves $F$ on $Y%$ with $[F]=\beta$ and $\chi(F)=1$.
Remarkably in this case, the $\DT_4$ virtual class has a preferred choice of orientations:
\begin{align}\label{prefer choice}[M_1(X,\beta)]^{\mathrm{vir}}=(-1)^{c_1(Y)\cdot\beta-1}\cdot [M_1(Y,\beta)]^{\mathrm{vir}},  \end{align}
where the sign in the RHS is conjectured to give the correct choice of orientations which makes Conjecture \ref{intro g=0 one dim sheaf conj} true (see \cite[Conjecture 0.2]{CaoFano}). We verify Conjecture \ref{intro main conj} in the above examples using this particular choice of orientations. 

Finally we remark that the proof of the $d=3$ case for $X=K_{\mathbb{P}^3}$ requires an identification of the moduli space $M_1(X,3)$ with
the moduli space $P_1(X,3)$ of PT stable pairs (see Proposition \ref{local Fano 3}).   
By using the 4-fold stable pair vertex \cite{CK2, CKM}, one can explicitly compute the invariant and obtain a matching. 
We thank Sergej Monavari for doing a computation for us using his Maple program. 
Other cases can be computed through explicit descriptions of moduli spaces and virtual classes, and are reduced 
to calculations using classical intersection theory.

\subsection{Acknowledgement}
We are grateful to Martijn Kool and Sergej Monavari for helpful discussions and 
warmly thank Sergej Monavari for his help in doing a computation using his program.
This work is partially supported by the World Premier International Research Center Initiative (WPI), MEXT, Japan.
Y. C. is partially supported by JSPS KAKENHI Grant Number JP19K23397 and Newton International Fellowships Alumni 2019.
Y. T. is partially supported by Grant-in Aid for Scientific Research grant (No. 26287002) from MEXT, Japan.

\section{Definitions and conjectures}

\subsection{GV type invariants of CY 4-folds}\label{section on GV/GW}
Let $X$ be a Calabi-Yau 4-fold.
The genus 0 Gromov-Witten invariants on $X$ are defined using
insertions: for $\gamma \in H^{4}(X, \mathbb{Z})$,
one defines
\begin{equation}
\mathrm{GW}_{0, \beta}(\gamma)
=\int_{[\overline{M}_{0, 1}(X, \beta)]^{\rm{vir}}}\mathrm{ev}^{\ast}(\gamma)\in\mathbb{Q},
\nonumber \end{equation}
where $\mathrm{ev} \colon \overline{M}_{0, 1}(X, \beta)\to X$
is the evaluation map.

The \textit{genus 0 Gopakumar-Vafa type invariants} 
\begin{align}\label{g=0 GV}
n_{0, \beta}(\gamma) \in \mathbb{Q}
\end{align}
are defined by Klemm-Pandharipande \cite{KP} from the identity
\begin{align*}
\sum_{\beta>0}\mathrm{GW}_{0, \beta}(\gamma)q^{\beta}=
\sum_{\beta>0}n_{0, \beta}(\gamma) \sum_{d=1}^{\infty}
d^{-2}q^{d\beta}.
\end{align*}
For the genus 1 case, virtual dimensions of moduli spaces of stable maps are zero, so Gromov-Witten invariants
\begin{align*}
\mathrm{GW}_{1, \beta}=\int_{[\overline{M}_{1, 0}(X, \beta)]^{\rm{vir}}}
1 \in \mathbb{Q}
\end{align*}
can be defined
without insertions.
The \textit{genus 1 Gopakumar-Vafa type invariants}
\begin{align}\label{g=1 GV}
n_{1, \beta} \in \mathbb{Q}
\end{align}
 are defined in~\cite{KP} by the identity
\begin{align*}
\sum_{\beta>0}
\mathrm{GW}_{1, \beta}q^{\beta}=
&\sum_{\beta>0} n_{1, \beta} \sum_{d=1}^{\infty}
\frac{\sigma(d)}{d}q^{d\beta}
+\frac{1}{24}\sum_{\beta>0} n_{0, \beta}(c_2(X))\log(1-q^{\beta}) \\
&-\frac{1}{24}\sum_{\beta_1, \beta_2}m_{\beta_1, \beta_2}
\log(1-q^{\beta_1+\beta_2}),
\end{align*}
where $\sigma(d)=\sum_{i|d}i$ and $m_{\beta_1, \beta_2}\in\mathbb{Z}$ are called meeting invariants defined as follows.

Take a basis $S_1, \cdots, S_k$ of the free part of $H^4(X, \mathbb{Z})$ and let
\begin{align}\label{kunneth decom}
\sum_{i, j} g^{ij} [S_i \otimes S_j] \in H^8(X \times X, \mathbb{Z})
\end{align}
be the K\"unneth
decomposition of the $(4,4)$ component of the diagonal class (mod torsion).
For $\beta_1, \beta_2 \in H_2(X, \mathbb{Z})$,
the \emph{meeting invariant}\,\footnote{They are integers because of Ionel-Parker's proof of genus zero integrality \cite[Theorem 9.2]{IP}.}
\begin{align}\label{meeting invs}
m_{\beta_1, \beta_2} \in \mathbb{Z}
\end{align}
is the virtual number of rational curves
of class $\beta_1$ meeting rational curves of
class $\beta_2$. They are uniquely determined by
the following rules: \\

(i) The meeting invariants are symmetric,
$m_{\beta_1, \beta_2}=m_{\beta_2, \beta_1}$. \\

(ii) If either $\deg(\beta_1)\leqslant 0$ or $\deg(\beta_2)\leqslant 0$,
we have $m_{\beta_1, \beta_2}=0$. \\

(iii)
If $\beta_1 \neq \beta_2$, then
\begin{align*}
m_{\beta_1, \beta_2}=
\sum_{i, j} n_{0, \beta_1}(S_i) g^{ij} n_{0, \beta_2}(S_j)
+m_{\beta_1, \beta_2-\beta_1}+m_{\beta_1-\beta_2, \beta_2}.
\end{align*}

(iv)
If $\beta_1=\beta_2=\beta$, we have
\begin{align*}
m_{\beta, \beta}=n_{0, \beta}(c_2(X))
+\sum_{i, j}n_{0, \beta}(S_i)g^{ij} n_{0, \beta}(S_j)
-\sum_{\beta_1+\beta_2=\beta}
m_{\beta_1, \beta_2}.
\end{align*}
In~\cite{KP}, both of the invariants (\ref{g=0 GV}), (\ref{g=1 GV})
are conjectured to be integers, and Gromov-Witten invariants on $X$ are computed to support the conjectures in many examples
by localization technique or mirror symmetry. Note that the genus zero integrality conjecture has been proved 
by Ionel-Parker \cite[Theorem 9.2]{IP} using symplectic geometry.

\subsection{Review of $\DT_4$ invariants}\label{subsec:DT4}

%We first introduce the set-up of Donaldson-Thomas invariants of smooth projective Calabi-Yau 4-folds $X$.
We fix an ample divisor $\omega$ on $X$
and take a cohomology class
$v \in H^{\ast}(X, \mathbb{Q})$.
The coarse moduli space $M_{\omega}(v)$
of $\omega$-Gieseker semistable sheaves
$E$ on $X$ with $\ch(E)=v$ exists as a projective scheme.
We always assume that
$M_{\omega}(v)$ is a fine moduli space, i.e.~any point $[E] \in M_{\omega}(v)$ is stable and
there is a universal family
\begin{align*}
\eE \in \Coh(X \times M_{\omega}(v))
\end{align*}
flat over $M_\omega(v)$.
For instance, the moduli space of 1-dimensional stable sheaves $E$ with $[E]=\beta$, $\chi(E)=1$ and Hilbert schemes of 
closed subschemes satisfy this assumption \cite{CK1, CMT1}.

In~\cite{BJ, CL1}, under certain hypotheses,
the authors construct 
a virtual
class
\begin{align}\label{virtual}
[M_{\omega}(v)]^{\rm{vir}} \in H_{2-\chi(v, v)}(M_{\omega}(v), \mathbb{Z}), \end{align}
where $\chi(-,-)$ denotes the Euler pairing.
Notice that this class could a priori be non-algebraic.

Roughly speaking, in order to construct such a class, one chooses at
every point $[E]\in M_{\omega}(v)$, a half-dimensional real subspace
\begin{align*}\Ext_{+}^2(E, E)\subseteq \Ext^2(E, E)\end{align*}
of the usual obstruction space $\Ext^2(E, E)$, on which the quadratic form $Q$ defined by Serre duality is real and positive definite. 
Then one glues local Kuranishi-type models of the form 
\begin{equation}\kappa_{+}=\pi_+\circ\kappa: \Ext^{1}(E,E)\to \Ext_{+}^{2}(E,E),  \nonumber \end{equation}
where $\kappa$ is the Kuranishi map for $M_{\omega}(v)$ at $[E]$ and $\pi_+$ denotes projection 
on the first factor of the decomposition $\Ext^{2}(E,E)=\Ext_{+}^{2}(E,E)\oplus\sqrt{-1}\cdot\Ext_{+}^{2}(E,E)$.  

In \cite{CL1}, local models are glued in three special cases: 
\begin{enumerate}
\item when $M_{\omega}(v)$ consists of locally free sheaves only, 
\item  when $M_{\omega}(v)$ is smooth,
\item when $M_{\omega}(v)$ is a shifted cotangent bundle of a quasi-smooth derived scheme. 
\end{enumerate}
In each case, the corresponding virtual classes are constructed using either gauge theory or algebraic-geometric perfect obstruction theory.

The general gluing construction is due to Borisov-Joyce \cite{BJ}, 
based on Pantev-T\"{o}en-Vaqui\'{e}-Vezzosi's theory of shifted symplectic geometry \cite{PTVV} and Joyce's theory of derived $C^{\infty}$-geometry.
The corresponding virtual class is constructed using Joyce's
D-manifold theory (a machinery similar to Fukaya-Oh-Ohta-Ono's theory of Kuranishi space structures used for defining Lagrangian Floer theory).

To construct the above virtual class (\ref{virtual}) with coefficients in $\mathbb{Z}$ (instead of $\mathbb{Z}_2$), we need an orientability result 
for $M_{\omega}(v)$, which can be stated as follows.
Let  
\begin{equation*}
 \lL:=\mathrm{det}(\dR \hH om_{\pi_M}(\eE, \eE))
 \in \Pic(M_{\omega}(v)), \quad  
\pi_M \colon X \times M_{\omega}(v)\to M_{\omega}(v)
\end{equation*}
be the determinant line bundle of $M_{\omega}(v)$, which is equipped with the nondegenerate symmetric pairing $Q$ induced by Serre duality.  An \textit{orientation} of 
$(\mathcal{L},Q)$ is a reduction of its structure group from $\mathrm{O}(1,\mathbb{C})$ to $\mathrm{SO}(1, \mathbb{C})=\{1\}$. In other words, we require a choice of square root of the isomorphism
\begin{equation*}Q: \lL\otimes \lL \to \oO_{M_{\omega}(v)}.  \end{equation*}
Existence of orientations is first proved when the Calabi-Yau 4-fold $X$ satisfies 
$\mathrm{Hol}(X)=\mathrm{SU}(4)$ and $H^{\rm{odd}}(X,\mathbb{Z})=0$ \cite{CL2}, 
and is recently generalized to arbitrary Calabi-Yau 4-folds \cite{CGJ}.
Notice that the collection of orientations forms a torsor for $H^{0}(M_{\omega}(v),\mathbb{Z}_2)$. 

Examples computed in Section \ref{sect on examples} only involve virtual class constructions in (2) and (3) mentioned above.
We briefly review them (modulo discussions on choices of orientations) as follows:  
\begin{itemize}
\item When $M_{\omega}(v)$ is smooth, the obstruction sheaf $\mathrm{Ob}\to M_{\omega}(v)$ is a vector bundle endowed with a quadratic form $Q$ via Serre duality. Then the virtual class is given by
\begin{equation}[M_{\omega}(v)]^{\rm{vir}}=\mathrm{PD}(e(\mathrm{Ob},Q)).   \nonumber \end{equation}
Here $e(\mathrm{Ob}, Q)$ is the half-Euler class of 
$(\mathrm{Ob},Q)$ (i.e.~the Euler class of its real form $\mathrm{Ob}_+$, the orientation of $(\mathcal{L},Q)$ in this case is equivalent to the
orientation of $\mathrm{Ob}_+$), 
and $\mathrm{PD}(-)$ denotes 
Poincar\'e dual. 
Note that the half-Euler class satisfies 
\begin{align*}
e(\mathrm{Ob},Q)^{2}&=(-1)^{\frac{\mathrm{rk}(\mathrm{Ob})}{2}}e(\mathrm{Ob}),  \qquad \textrm{ }\mathrm{if}\textrm{ } \mathrm{rk}(\mathrm{Ob})\textrm{ } \mathrm{is}\textrm{ } \mathrm{even}, \\
 e(\mathrm{Ob},Q)&=0, \qquad\qquad\qquad\qquad \ \,  \textrm{ }\mathrm{if}\textrm{ } \mathrm{rk}(\mathrm{Ob})\textrm{ } \mathrm{is}\textrm{ } \mathrm{odd}. 
\end{align*}
\item Suppose $M_{\omega}(v)$ is the classical truncation of the shifted cotangent bundle of a quasi-smooth derived scheme. Roughly speaking, this means that at any closed point $[E]\in M_{\omega}(v)$, we have a Kuranishi map of the form
\begin{equation}\kappa \colon
 \Ext^{1}(E,E)\to \Ext^{2}(E,E)=V_E\oplus V_E^{*},  \nonumber \end{equation}
where $\kappa$ factors through a maximal isotropic subspace $V_E$ of $(\Ext^{2}(E,E),Q)$. Then the virtual class of $M_{\omega}(v)$ is, 
roughly speaking, the 
virtual class of the perfect obstruction theory formed by $\{V_E\}_{E\in M_{\omega}(v)}$. 
When $M_{\omega}(v)$ is furthermore smooth as a scheme, 
then it is
simply the Euler class of the vector bundle 
$\{V_E\}_{E\in M_{\omega}(v)}$ over $M_{\omega}(v)$. 
\end{itemize}

\subsection{Sheaf theoretical interpretation with primary insertions}

In \cite{CMT1}, a sheaf theoretical interpretation of genus zero GV type invariants \eqref{g=0 GV} is proposed using $\DT_4$ invariants.

To be precise, let $M_1(X, \beta)$ be the moduli scheme of 
one dimensional stable sheaves $F$ on $X$ with $[F]=\beta\in H_2(X,\mathbb{Z})$ and $\chi(F)=1$. 
There exists a virtual class (see \cite[Section 3.2]{CT19}):
$$[M_1(X,\beta)]^{\mathrm{vir}}\in H_2(M_1(X,\beta),\mathbb{Z}), $$
in the sense of Borisov-Joyce \cite{BJ}, which depends on the choice of an orientation \cite{CGJ}.
For integral classes $\gamma \in H^{4}(X, \mathbb{Z})$, consider insertions: 
\begin{align}\label{insertion ch3 for one dim sheaves} 
\tau_0 \colon H^{4}(X, \mathbb{Z})\to H^{2}(M_1(X,\beta), \mathbb{Z}), \quad 
\tau_0(\gamma):=(\pi_{M})_{\ast}(\pi_X^{\ast}\gamma \cup\ch_{3}(\mathbb{F}) ),
\end{align}
where $\pi_X$, $\pi_M$ are projections from $X \times M_1(X,\beta)$
to corresponding factors, $\mathbb{F}$ is a universal sheaf unique up to a twist of a line bundle from $M_1(X,\beta)$. 

Note that $\ch_3(\mathbb{F})$ is the
Poincar\'e dual to the fundamental cycle of $\mathbb{F}$ which is independent of the choice of $\mathbb{F}$.
We can define $\DT_4$ invariants with primary insertions on $M_1(X,\beta)$:
\begin{align*}\langle\tau_0(\gamma) \rangle_{\beta}:=\int_{[M_{1}(X,\beta)]^{\rm{vir}}}\tau_0(\gamma). \end{align*}
In \cite{CMT1}, this invariant is proposed to recover genus zero GV type invariants. 
\begin{conj}\emph{(\cite[Conjecture 0.2]{CMT1})}\label{g=0 one dim sheaf conj}
For a certain choice of an orientation on $M_1(X, \beta)$, we have 
$$\langle\tau_0(\gamma) \rangle_{\beta}=n_{0,\beta}(\gamma), $$
where $n_{0,\beta}(\gamma)$ is the g=0 Gopakumar-Vafa type invariant (\ref{g=0 GV}).
\end{conj}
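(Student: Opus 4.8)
The plan is to prove the identity by reducing to an ideal $\mathrm{CY}_4$ geometry, matching the two sides class by class, and then arguing that the comparison is stable under the deformations that relate a general $X$ to such ideal geometries. The starting point is the observation already recorded above: since $\ch_3(\mathbb{F})$ is Poincar\'e dual to the fundamental cycle of the universal sheaf, it is independent of the choice of $\mathbb{F}$, and if $\mathcal{Z}\subset X\times M_1(X,\beta)$ denotes this support cycle, with projections $p\colon\mathcal{Z}\to X$ and $q\colon\mathcal{Z}\to M_1(X,\beta)$, then $\tau_0(\gamma)=q_{\ast}p^{\ast}\gamma$. Hence $\langle\tau_0(\gamma)\rangle_{\beta}=\int_{[M_1(X,\beta)]^{\mathrm{vir}}}q_{\ast}p^{\ast}\gamma$ is a purely geometric measurement of how the family of supporting curves meets the $4$-cycle $\mathrm{PD}(\gamma)$. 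On the Gromov--Witten side, after the $\sum_{d}d^{-2}q^{d\beta}$ multiple-cover contributions built into its definition are stripped away, $n_{0,\beta}(\gamma)$ is the analogous BPS count of rational curves of class $\beta$ incident to $\mathrm{PD}(\gamma)$. The goal is to show these two incidence counts agree.

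First I would treat the \emph{primitive, unobstructed} case: suppose the rational curves of class $\beta$ form a smooth family over a base $B$ with $M_1(X,\beta)\cong B$ via $b\mapsto\oO_{C_b}$, and that the moduli space is smooth with an obstruction bundle of the expected rank. Here the $\DT_4$ virtual class is the half-Euler class reviewed in Section~\ref{subsec:DT4}, $\mathcal{Z}$ is the universal curve, and $q_{\ast}p^{\ast}\gamma$ is literally the class counting members of the family meeting $\mathrm{PD}(\gamma)$. In parallel, for a primitive class with no multiple covers the genus-zero relation degenerates to $n_{0,\beta}(\gamma)=\mathrm{GW}_{0,\beta}(\gamma)$, which, via the identification of $\overline{M}_{0,1}(X,\beta)$ with the universal curve and of $\mathrm{ev}$ with $p$, equals the same incidence number. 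Matching the two then reduces to comparing a half-Euler-class computation against a genus-zero virtual stable-map count, which in the unobstructed case is classical intersection theory on $B$.

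The substantive step is the \emph{multiple-cover} analysis, passing from $\beta$ to $d\beta$ supported on a fixed local rational curve $C\cong\mathbb{P}^1$ with $N_{C/X}$ of rank three and $\det N_{C/X}\cong\oO(-2)$. On the sheaf side this forces one to understand $M_1(X,d\beta)$ along the thickenings of $C$ --- pure one-dimensional sheaves of multiplicity $d$ with $\chi=1$ --- together with its $\DT_4$ virtual class and the restriction of $\tau_0(\gamma)$ to this locus. The claim to be established is that this local $\DT_4$ contribution reproduces exactly the $d^{-2}$ weighting in the definition of $n_{0,\beta}(\gamma)$, with the insertion contributing a linear factor measuring the intersection of the moving curve with $\mathrm{PD}(\gamma)$. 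I would compute this through the local Fano model $X=K_Y$ and the isomorphism $i_{\ast}\colon M_1(Y,\beta)\xrightarrow{\cong}M_1(X,\beta)$ together with the preferred orientation \eqref{prefer choice}, exactly as in the examples of Theorem~\ref{intro:locurve}, and match it against the known multiple-cover contributions to genus-zero Gromov--Witten invariants of local curves.

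The hard part, and the reason this remains a conjecture rather than a theorem, is the final globalization step. Gromov--Witten invariants, and hence $n_{0,\beta}(\gamma)$, are deformation invariant, so one is free to degenerate $X$ to an ideal geometry; but the invariants $\langle\tau_0(\gamma)\rangle_{\beta}$ are produced by the Borisov--Joyce construction through derived $C^{\infty}$-geometry, where deformation invariance and the requisite degeneration and gluing formulas are not available in the needed generality, and the virtual class moreover depends on a choice of orientation. Absent such a general invariance statement, the realistic route is to prove the equality in the three structural families where both sides are computable --- elliptic fibrations via Fourier--Mukai descriptions of the normalized universal sheaf, products $Y\times E$, and local Fano geometries $K_Y$ --- and to extract from these a uniform local contribution that, together with the integrality of the genus-zero invariants established by Ionel--Parker, forces $\langle\tau_0(\gamma)\rangle_{\beta}=n_{0,\beta}(\gamma)$ on the effective cone in those geometries.
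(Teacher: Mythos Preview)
The statement you are addressing is Conjecture~\ref{g=0 one dim sheaf conj}, not a theorem. The paper does not prove it; it is quoted from~\cite{CMT1} as an open conjecture and serves only as motivation for the paper's own Conjecture~\ref{main conj}. So there is no ``paper's own proof'' to compare against, and your proposal cannot be evaluated as a correct alternative argument --- it is a heuristic strategy for an open problem, and you yourself concede as much in your final paragraph.

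Even viewed as a strategy, the proposal has genuine gaps beyond the deformation-invariance issue you flag. First, the ``ideal $\mathrm{CY}_4$ geometry'' is a heuristic device (exactly the one used in Section~\ref{heuristic argue} of the paper to \emph{motivate} Conjecture~\ref{main conj}), not something one can rigorously deform a general $X$ to; there is no algebraic or symplectic degeneration theorem producing such a geometry. Second, your multiple-cover step asserts that the local $\DT_4$ contribution on a thickened $\mathbb{P}^1$ reproduces the $d^{-2}$ weighting, but this is precisely the content of the conjecture in the local model and is not known: the examples in Theorem~\ref{intro:locurve} verify it only for small degrees by direct computation, not by a structural argument. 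Third, the ``primitive, unobstructed'' case you describe already assumes that $M_1(X,\beta)$ is smooth of the expected dimension with the obstruction bundle controlling the virtual class --- this is exactly the situation of Section~\ref{subsec:DT4}(2), but for a general CY 4-fold there is no reason the moduli space falls into this regime even for primitive $\beta$. In short, what you have written is a reasonable outline of \emph{why one believes} the conjecture, closely parallel to the paper's own heuristic in Section~\ref{heuristic argue}, but it is not a proof and should not be presented as one.
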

In \cite{CMT2}, the authors also proposed how to recover all genus GV type invariants via stable pair theory which is a CY 4-fold analogy of Pandharipande-Thomas' work \cite{PT} on CY 3-folds. 
The motivation of this work is to give a sheaf theoretical interpretation of 
genus one invariants via moduli spaces of one dimensional stable sheaves, which is in spirit 
an analogy of the work of Gopakumar-Vafa \cite{GV} and Maulik-Toda \cite{MT}.

\subsection{$\DT_4$ descendent invariants and conjecture}
Of course, the theory of sheaves of vanishing cycles is infeasible on Calabi-Yau 4-folds. To capture genus one GV type invariants,
we propose to use the insertion \eqref{insertion ch3 for one dim sheaves} involving higher Chern characters ($\ch_4$). 

To fix the ambiguity in choosing a universal sheaf, we define 
\begin{align}\label{norm univ sheaf}\mathbb{F}_{\mathrm{norm}}:=\mathbb{F}\otimes\pi_M^*(\det(\pi_{M*}\mathbb{F}))^{-1}. \end{align}
It is independent of the choice of $\mathbb{F}$ and its push-forward to $M_1(X,\beta)$ has trivial determinant. 
For $\alpha\in H^2(X,\mathbb{Z})$, we define (descendent) insertions 
\begin{align*}  
\tau_1 \colon H^{2}(X, \mathbb{Z})\to H^{2}(M_1(X,\beta), \mathbb{Q}), \quad 
\tau_1(\alpha):=(\pi_{M})_{\ast}(\pi_X^{\ast}\alpha \cup\ch_{4}(\mathbb{F}_{\mathrm{norm}}) ),
\end{align*}
and $\DT_4$ invariants with descendent insertions:
\begin{align}\label{des invs} %\label{intro dt4 for one dim stable}
\langle\tau_1(\alpha) \rangle_{\beta}:=\int_{[M_{1}(X,\beta)]^{\rm{vir}}}\tau_1(\alpha). \end{align}
\begin{conj}\label{main conj}
For a certain choice of an orientation on $M_1(X, \beta)$, there is an equality of functions of $\alpha \in H^2(X)$ 
\begin{align*}\langle\tau_1(\alpha) \rangle_{\beta}=\frac{n_{0,\beta}(\alpha^2)}{2\,(\alpha\cdot\beta)}
-\sum_{\beta_1+\beta_2=\beta}\frac{(\alpha\cdot\beta_1)(\alpha\cdot\beta_2)}{4\,(\alpha\cdot\beta)}m_{\beta_1,\beta_2}
-\sum_{k\geqslant1,\,k|\beta}\frac{(\alpha\cdot\beta)}{k}\,n_{1,\beta/k}.  \end{align*}
Here $n_{0,\beta}(-)$ and $n_{1,\beta}$ are genus 0 and 1 GV type invariants \eqref{g=0 GV}, \eqref{g=1 GV} and $m_{\beta_1,\beta_2}$ are meeting invariants \eqref{meeting invs} 
which can be inductively determined by genus 0 GV type invariants.
\end{conj}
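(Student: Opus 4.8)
The plan is to attack Conjecture~\ref{main conj} in two stages, because a proof valid for every Calabi--Yau $4$-fold is out of reach: unlike the $\mathrm{CY}_3$ situation, moduli of sheaves on a $\mathrm{CY}_4$ are not locally critical loci of functions, so the perverse-sheaf and vanishing-cycle machinery underlying~\eqref{intro:MT} has no counterpart here and cannot produce the right-hand side directly. Instead I would first \emph{derive} the shape of the formula by a heuristic computation in an idealized geometry (Section~\ref{heuristic argue}), and then \emph{verify} the resulting identity rigorously in the families of examples where $M_1(X,\beta)$ admits an explicit description (Section~\ref{sect on examples}).

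For the derivation I would work in an ideal $\mathrm{CY}_4$ geometry in which every one-dimensional stable sheaf $F$ with $\chi(F)=1$ is the pushforward $\iota_{C\ast}L$ of a rank-one torsion-free sheaf $L$ on an integral curve $C\hookrightarrow X$ of class $\beta$; since $\chi(\iota_{C\ast}L)=\deg L+1-g=1$ one has $\deg L=g$, so $M_1(X,\beta)$ fibers over a moduli of such curves with fibers the compactified Jacobians $\overline{\mathrm{Jac}}^{g}(C)$. The descendent $\tau_1(\alpha)$ is governed by the $H^6(X)\otimes H^2(M_1(X,\beta))$-component of $\ch_4(\mathbb{F}_{\mathrm{norm}})$, which on a Jacobian fiber is controlled by the first Chern class of the (normalized) Poincar\'e sheaf; pairing with $\alpha$ produces the factor $\alpha\cdot\beta$ recording the intersection of $\alpha$ with the moving curve. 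I would then organize $\langle\tau_1(\alpha)\rangle_\beta$ by the geometric genus and reducibility of $C$: smooth rational curves, whose Jacobian fibers are points and which move in a virtually one-dimensional family, give the genus-zero term $n_{0,\beta}(\alpha^2)/2(\alpha\cdot\beta)$; smooth genus-one curves, together with their $k$-fold covers for $k\mid\beta$, give $\sum_{k\mid\beta}\frac{\alpha\cdot\beta}{k}\,n_{1,\beta/k}$ through integration over the elliptic Jacobian fibers; and reducible configurations $C=C_1\cup C_2$ with $[C_i]=\beta_i$ give the meeting-invariant term, the factor $(\alpha\cdot\beta_1)(\alpha\cdot\beta_2)$ recording the two intersection degrees. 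Matching the precise rational prefactors (and the quadratic insertion $\alpha^2$ in the genus-zero term) to the universal Poincar\'e--Jacobian geometry is exactly the content of the heuristic, and the signs in front of the genus-one invariants are left undetermined by it.

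In the second stage I would verify the identity example by example using the explicit models available: Fourier--Mukai transforms to pin down $\mathbb{F}_{\mathrm{norm}}$, hence $\ch_4(\mathbb{F}_{\mathrm{norm}})$, for the Weierstrass elliptic fibration (Theorem~\ref{thm:elliptic}); the product structure of $M_1(X,\beta)$ for $X=Y\times E$; and the isomorphism $M_1(Y,\beta)\cong M_1(X,\beta)$ together with the preferred orientation~\eqref{prefer choice} for the local Fano cases, reducing the left-hand side to a classical intersection-theoretic or stable-pair computation. The hardest part is twofold. First, already the \emph{definition} of the left-hand side requires controlling the normalization $\mathbb{F}_{\mathrm{norm}}=\mathbb{F}\otimes\pi_M^\ast(\det\pi_{M\ast}\mathbb{F})^{-1}$, so that computing $\ch_4(\mathbb{F}_{\mathrm{norm}})$ forces one to understand $\det(\pi_{M\ast}\mathbb{F})$ explicitly, alongside the $\DT_4$ virtual class and the correct orientation. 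Second, the heuristic alone does not fix the signs of the genus-one terms; these must be extracted experimentally from the examples, so I would treat the sign as an output of the computation rather than an input. The consistency of the formula as a genuine linear function of $\alpha$ --- the absence of the apparent pole at $\alpha\cdot\beta=0$ --- is guaranteed independently by Theorem~\ref{constraint holds}.
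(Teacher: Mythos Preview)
Your two-stage plan (heuristic derivation, then verification in explicit examples) is exactly the structure the paper adopts, and your list of examples and techniques for the second stage matches the paper's Section~\ref{sect on examples} closely. So at the level of strategy you are aligned with the paper.

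Where you diverge is in the mechanism of the heuristic itself. You picture $M_1(X,\beta)$ as a compactified-Jacobian fibration over a moduli of curves and try to read off the three terms from the fiberwise Poincar\'e geometry. The paper's heuristic is more concrete and rather different in detail. For the rational contribution, the relevant object is not a Jacobian fiber (which for $\mathbb{P}^1$ is a point and contributes nothing) but the total space $D_\beta$ of the one-parameter \emph{family} of embedded rational curves, modelled as a blow-up of a $\mathbb{P}^1$-bundle. Grothendieck--Riemann--Roch gives $\ch_4(j_\ast\oO_{D_\beta})=-\tfrac12 c_1(\omega_\pi)$, and the computation is pure intersection theory on $D_\beta$ in the basis $\{\beta,\psi,\beta_1^{(i)}\}$; the meeting-invariant term arises from the nodal \emph{fibers} of this same family (the exceptional curves of the blow-up), not from separately parametrized reducible curves, and one uses the Klemm--Pandharipande identity $\psi^2=-\tfrac12\sum m_{\beta_1,\beta_2}$ to close the computation. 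For the elliptic contribution, the multiple-cover phenomenon is realized not by $k$-fold covering maps of curves but by the moduli $M_E(r,1)$ of rank-$r$ degree-one stable bundles on a single super-rigid elliptic curve $E$ with $r[E]=\beta$; the determinant map identifies this with $E$ and the normalized universal sheaf has an explicit $K$-theory class from which $\int\tau_1(\alpha)=(\alpha\cdot\beta)/r$ follows directly.

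None of this invalidates your plan, but your Jacobian picture as stated would not produce the rational prefactors: the genus-zero term and the meeting-invariant correction both live on the base of the rational family, not on Jacobian fibers, and the $\psi$-class calculus is what pins down the coefficients $\tfrac{1}{2(\alpha\cdot\beta)}$ and $\tfrac{1}{4(\alpha\cdot\beta)}$. You are right that the sign in front of the genus-one term is fixed only experimentally; the paper says this explicitly.
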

We believe the choices of orientations in Conjecture \ref{g=0 one dim sheaf conj} and Conjecture \ref{main conj} are the same when moduli spaces are connected. This will be checked in examples considered in Section \ref{sect on examples}.

Combining Conjecture \ref{g=0 one dim sheaf conj} and Conjecture \ref{main conj},  
all genus GV type invariants can be interpreted in terms of $\DT_4$ counting invariants of one dimensional stable sheaves.

\subsection{Heuristic argument}\label{heuristic argue}
In this section, we justify Conjecture \ref{main conj} in an ideal situation: let $X$ be an `ideal' $\mathrm{CY_{4}}$
in the sense that all curves of $X$ deform in families of expected dimensions, and have expected generic properties, i.e.
\begin{enumerate}
\item
any rational curve in $X$ is a chain of smooth $\mathbb{P}^1$'s with normal bundle $\mathcal{O}_{\mathbb{P}^{1}}(-1,-1,0)$, and
moves in a compact 1-dimensional smooth family of embedded rational curves, whose general member is smooth with 
normal bundle $\mathcal{O}_{\mathbb{P}^{1}}(-1,-1,0)$. 
\item
any elliptic curve $E$ in $X$ is smooth, super-rigid, i.e. 
the normal bundle is 
$L_1 \oplus L_2 \oplus L_3$
for general degree zero line bundle $L_i$ on $E$
satisfying $L_1 \otimes L_2 \otimes L_3=\oO_E$. 
Furthermore any two elliptic curves are 
disjoint and disjoint from all families of rational curves on $X$. 
\item
there is no curve in $X$ with genus $g\geqslant 2$.
\end{enumerate}
For a fixed curve class $\beta$, let $D_{\beta}$ be the total space of one dimensional family of rational curves in 
class $\beta$. It has a fibration 
$$\pi \colon D_\beta \to M_\beta $$
to the (one dimensional) moduli space $M_\beta$ of rational curves in class $\beta$. In the ideal situation, most fibers 
of $\pi$ are smooth $\mathbb{P}^1$ with normal bundle $\mathcal{O}_{\mathbb{P}^{1}}(-1,-1,0)$ 
except finitely many reducible fibers. As in \cite[Sect. 2.3]{KP},
 we may assume 
$D_\beta$ is modelled on the blow-up of a $\mathbb{P}^1$-bundle over finitely many general points.
We first determine its contribution to our $\DT_4$ descendent invariant.
The universal family of $M_1(X,\beta)$ is $j_*\oO_{D_\beta}$ for 
the embedding 
\begin{align*}
j \colon 
D_\beta \hookrightarrow X\times M_{\beta}.
\end{align*}
It is obvious that its push-forward to the moduli space has trivial determinant.
By the Grothendieck-Riemann-Roch formula, we have 
$$\ch_4(j_{\ast}\oO_{D_\beta})=-\frac{1}{2}c_1(\omega_{\pi}). $$
Therefore for the family of rational curves (with respect to the orientation in \cite{CMT1}, i.e. taking the usual fundamental class 
of the moduli space), we have 
$$\int_{M_{\beta}}\tau_1(\alpha)=-\frac{1}{2}\int_{D_\beta}c_1(\omega_{\pi})\cdot \alpha|_{D_\beta}. $$
Below we explicitly compute this intersection number.
Suppose that $D_{\beta}$ is the blow-up 
of general $k$-points of a $\mathbb{P}^1$-bundle.
Then 
$$H^2(D_\beta,\mathbb{Q})=\mathbb{Q}\langle \beta,\psi,\beta_1^{(1)},
\ldots, \beta_1^{(k)}\rangle
$$ 
is generated by $k+2$ elements, where $\beta$ is the fiber class, 
$\beta_1^{(i)}$ are the classes of exceptional curves, 
$\psi=c_1(\omega_{\pi})$ is the first Chern class of the relative cotangent bundle of $\pi$. 

We have the following intersection numbers
\begin{align*}
\beta\cdot \beta=\beta_1^{(i)} \cdot \beta=0,\quad \beta\cdot \psi=-2,\quad \beta_1^{(i)}\cdot\psi=(\beta_1^{(i)})^2=-1. 
\end{align*}
The restriction of $\alpha$ to $D_{\beta}$ satisfies  
\begin{align*}
\alpha|_{D_{\beta}}=a \beta+b\psi+
\sum_{i=1}^k d_i \beta_1^{(i)},
\end{align*}
for some $a, b, d_i \in \mathbb{Q}$. 
We have the following equations
\begin{align}
\label{eqn1}
&(\alpha|_{D_{\beta}})^2=b^2\psi^2-\sum_{i=1}^k d_i^2-4ab-2b \sum_{i=1}^k d_i, \\
\label{eqn2}
&-\frac{1}{2}\psi\cdot \alpha|_{D_{\beta}}=a-\frac{b}{2}\psi^2+
\frac{1}{2} \sum_{i=1}^k d_i, \\
\label{eqn3}
& \alpha \cdot \beta=-2b, \quad  \alpha \cdot \beta_1^{(i)}=-b-d_i. 
\end{align}
%From equations 
%$$(\alpha|_{D_{\beta}})^2=b^2\psi^2-d^2-e^2-4ab-2bd-2be, $$
%$$-\frac{1}{2}\psi\cdot \alpha|_{D_{\beta}}=a-\frac{b}{2}\psi^2+\frac{d}{2}+\fr%ac{e}{2}, $$
%$$\alpha\cdot \beta=-2b, \quad \alpha\cdot \beta_1=-b-d, \quad \alpha\cdot \bet%a_1'=-b-e, $$
We set $\beta_2^{(i)}=\beta-\beta_1^{(i)}$. 
From equation (\ref{eqn3}), we 
can solve $b, d_i$ and obtain
\begin{align*}
b=-\frac{1}{2} \alpha \cdot \beta, \quad d_i=\frac{1}{2} \alpha \cdot (\beta_2^{(i)}-\beta_1^{(i)}).
\end{align*}
By substituting into (\ref{eqn2}), 
we can solve $a$ and obtain
\begin{align*}
a=-\frac{1}{2} \psi \cdot \alpha|_{D_{\beta}}-\frac{\psi^2}{4}
(\alpha \cdot \beta)+\frac{1}{4}\sum_{i=1}^k \alpha \cdot (\beta_2^{(i)}-\beta_1^{(i)}). 
\end{align*}
By substituting these 
into (\ref{eqn1}),
we obtain 
\begin{align*}
-\frac{1}{2}\psi\cdot \alpha|_{D_{\beta}}=\frac{(\alpha|_{D_{\beta}})^2}{2(\alpha\cdot \beta)}+\frac{(\alpha\cdot \beta)}{8}\psi^2+
\sum_{i=1}^k \frac{(\alpha \cdot \beta_1^{(i)}-\alpha \cdot \beta_2^{(i)})^2}{8(\alpha \cdot \beta)}.
\end{align*}
%$$-\frac{1}{2}\psi\cdot \alpha|_{D_{\beta}}=\frac{(\alpha|_{D_{\beta}})^2}{2(\a%lpha\cdot \beta)}+\frac{(\alpha\cdot \beta)}{8}\psi^2+
%\frac{(\alpha\cdot \beta_1-\alpha\cdot \beta_2)^2}{8(\alpha\cdot \beta)}+\frac{%(\alpha\cdot \beta'_1-\alpha\cdot \beta'_2)^2}{8(\alpha\cdot \beta)},$$
%where $\beta_2=\beta-\beta_1$, $\beta_2'=\beta-\beta_1'$. 
However some $\beta_1^{(i)}$ may be equal to $\beta_1^{(j)}$ in the 
ambient space $X$, and the number of times this could happen is given by the meeting invariant $m_{\beta_1,\beta_2}$ 
for such $\beta_1=\beta_1^{(i)}$ and $\beta_2=\beta_2^{(i)}$
(ref. \cite[pp. 10]{KP}). 
We also need to 
divide it by two in order to modulo the symmetry between $\beta_1$ and $\beta_2=\beta-\beta_1$. Therefore, the above formula should be adjusted to be
$$-\frac{1}{2}\psi\cdot \alpha|_{D_{\beta}}=\frac{(\alpha|_{D_{\beta}})^2}{2(\alpha\cdot \beta)}+\frac{(\alpha\cdot \beta)}{8}\psi^2+
\sum_{\beta_1+\beta_2=\beta}\frac{(\alpha\cdot \beta_1-\alpha\cdot \beta_2)^2}{8(\alpha\cdot \beta)}\cdot \frac{m_{\beta_1,\beta_2}}{2}.$$
From \cite[pp. 11]{KP}, we have 
$$\psi^2=-\frac{1}{2}\sum_{\beta_1+\beta_2=\beta}m_{\beta_1,\beta_2}. $$
Therefore we obtain  
$$-\frac{1}{2}\psi\cdot \alpha|_{D_{\beta}}=\frac{(\alpha|_{D_{\beta}})^2}{2(\alpha\cdot \beta)}-\sum_{\beta_1+\beta_2=\beta}\frac{(\alpha\cdot\beta_1)(\alpha\cdot\beta_2)}{4\,(\alpha\cdot\beta)}m_{\beta_1,\beta_2}.$$
Thus considering only contribution from the rational curves family, we have 
\begin{align*}\langle\tau_1(\alpha) \rangle_{\beta}=\frac{n_{0,\beta}(\alpha^2)}{2\,(\alpha\cdot\beta)}
-\sum_{\beta_1+\beta_2=\beta}\frac{(\alpha\cdot\beta_1)(\alpha\cdot\beta_2)}{4\,(\alpha\cdot\beta)}m_{\beta_1,\beta_2}. 
\end{align*}
Next, we determine the contribution from a super-rigid elliptic curve $E$.  
The moduli space $M_1(X,\beta)$ of one dimensional stable sheaves which are supported on $E$ with $[E]=\beta$ is $\Pic^1(E)$, 
which is identified with $E$ by 
the map $x \mapsto \oO_E(x)$ for $x \in E$.  
It has a normalized universal sheaf  
\begin{align*}
\mathbb{F}=j_{\ast}\oO_{E\times E}(\Delta) \in \Coh(E \times E), 
\end{align*}
where $\Delta\subset E\times E$ is the diagonal, 
$j \colon E\times E\to X\times E$ is the product of the natural inclusion $E\hookrightarrow X$ and 
the identity map on $E$. 
%It is easy to check that 
%the pushforward of $\mathbb{F}$ to the moduli space has trivial determinant. 
For $\alpha \in H^2(X)$, a 
direct calculation gives 
\begin{align*}
\int_{\Pic^1(E)}\tau_1(\alpha)=\alpha\cdot [E]=\alpha\cdot \beta. 
\end{align*}
As for $r\geqslant1$ and $r|\beta$ (i.e. $\beta/r\in H_2(X,\mathbb{Z})$) and a super-rigid elliptic curve $E$ with $[E]=\beta/r$, 
the moduli space of one dimensional stable sheaves supported on $E$ is the moduli space $M_{E}(r,1)$ of rank $r$ degree one stable bundles 
on $E$. The determinant map defines an isomorphism 
\begin{align*}
\det \colon M_{E}(r,1)\stackrel{\cong}{\to}  M_{E}(1,1)=\Pic^1(E) \stackrel{\cong}{\leftarrow} E.
\end{align*}
Under the above isomorphism, 
the normalized universal sheaf $\mathbb{F}$ has the following $K$-theory 
class
\begin{align*}
[\mathbb{F}]=[j_{\ast}\oO_{E\times E}(\Delta)]+(r-1)[j_{\ast}\oO_{E \times E}].
\end{align*}
For example, the above identity follows from a simpler version of the argument 
in Lemma~\ref{lem:Fnorm}.
Therefore we similarly get 
\begin{align*}
\int_{M_{E}(r,1)}\tau_1(\alpha)=\frac{\alpha\cdot \beta}{r}. 
\end{align*}
Therefore all super-rigid elliptic curves contribute 
\begin{align*}
\sum_{r|\beta}\frac{(\alpha\cdot \beta)}{r}\cdot n_{1,\beta/r} 
\end{align*}
to our descendent invariant $\langle\tau_1(\alpha) \rangle_{\beta}$ for some choice of orientations.
Here we did not explain why minus sign in front of genus one GV type invariants comes as the right choice of orientations. In fact, 
this choice emerges from the experimental study of all examples in Section \ref{sect on examples}. It will be an interesting 
question to give an explanation of this choice in the above ideal geometry.

%Therefore modulo the choices of orientations, we justify Conjecture \ref{main conj} in this ideal situation.

\subsection{Constraints on genus zero GV type invariants}
In order to make sense of the formula 
in Conjecture~\ref{main conj}, 
the right hand side has to be a linear function on $\alpha$.
A priori, the right hand side is a rational function with 
simple pole along the hyperplane $\alpha \cdot \beta=0$, 
so we need to show the absence of pole of this 
rational function. 
It requires the following constraint 
\begin{align}\label{constraint}
n_{0,\beta}(\alpha^2)=\frac{1}{2}\sum_{\begin{subarray}{c}\beta_1+\beta_2=\beta  \\ \beta_1, \beta_2>0 \end{subarray}}(\alpha\cdot\beta_1)(\alpha\cdot\beta_2)\, m_{\beta_1,\beta_2},
\end{align}
for any $\alpha \in H^2(X)$ with $\alpha \cdot \beta=0$. 
Below we show that the above formula can be derived from 
the WDVV equation. 
%In fact, Conjecture \ref{main conj} implicitly implies the following nontrivial% constrains on genus zero GV type invariants of Calabi-Yau 4-folds (with Picard% number at least two).
%\begin{conj}\emph{(Consequence of Conjecture \ref{main conj})}\label{g=0 GV con%strains} 
%For $\alpha\in H^2(X)$ with $(\alpha\cdot\beta)=0$, we have 
%\begin{align*}n_{0,\beta}(\alpha^2)=\frac{1}{2}\sum_{\begin{subarray}{c}\beta_1%+\beta_2=\beta  \\ \beta_1, \beta_2>0 \end{subarray}}(\alpha\cdot\beta_1)(\alph%a\cdot\beta_2)\, m_{\beta_1,\beta_2}.  \end{align*}
%\end{conj} 
We first solve the recursion for the meeting numbers and 
express the identity (\ref{constraint}) in terms of
genus zero GV type invariants. 
\begin{lem}\label{lem:GV0}
The identity (\ref{constraint}) is equivalent to the 
identity
\begin{align}\label{constraint:equiv}
n_{0, \beta}(\alpha^2)=\frac{1}{2} 
\sum_{\begin{subarray}{c}k_1,k_2 \in \mathbb{Z}_{>0} \\
\mathrm{g.c.d.}(k_1, k_2)=1\end{subarray}
}\sum_{\begin{subarray}{c}
\beta_1+\beta_2=\beta \\
k_1|\beta_1, k_2|\beta_2 \end{subarray}}
\sum_{a, b}\frac{(\alpha \cdot \beta_1)(\alpha \cdot \beta_2)}{k_1^2 k_2^2}
n_{0, \beta_1/k_1}(S_a)g^{ab}n_{0, \beta_2/k_2}(S_b). 
\end{align}
Here $\mathrm{g.c.d.}(k_1, k_2)$ denotes the greatest common divisor of $k_1$ and $k_2$, $\{S_*\}$ is a basis of the free part of $H^4(X,\mathbb{Z})$
and $(g^{ab})$ is the inverse matrix of $(g_{ab})$
given in \eqref{kunneth decom}. 
\end{lem}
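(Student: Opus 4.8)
The plan is to unwind the defining recursion (i)--(iv) for the meeting invariants $m_{\beta_1,\beta_2}$ and express each of them as an explicit sum over products $n_{0,\cdot}(S_a)g^{ab}n_{0,\cdot}(S_b)$, then substitute this expression into the left side of \eqref{constraint} and observe that it collapses into the double sum on the right side of \eqref{constraint:equiv}. Concretely, I would first treat the ``primitive'' case: fix $\beta_1,\beta_2$ with $\beta_1\neq\beta_2$ and both positive. Rule (iii) reads $m_{\beta_1,\beta_2}=\sum_{a,b}n_{0,\beta_1}(S_a)g^{ab}n_{0,\beta_2}(S_b)+m_{\beta_1,\beta_2-\beta_1}+m_{\beta_1-\beta_2,\beta_2}$, and iterating it along the Euclidean-algorithm tree for the pair $(\beta_1,\beta_2)$ (using rule (ii) to kill the terms that fall out of the positive cone, and rule (i) for symmetry) resolves $m_{\beta_1,\beta_2}$ into a sum of ``diagonal'' meeting invariants $m_{\gamma,\gamma}$ plus a sum of products of genus-zero GV invariants. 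Rule (iv) then resolves each $m_{\gamma,\gamma}$; the key bookkeeping point, exactly as in \cite[\S2.3]{KP}, is that the $n_{0,\beta}(c_2(X))$ term that appears in (iv) is cancelled — in the present setting we only ever care about the combination appearing in \eqref{constraint}, where $\alpha\cdot\beta=0$, so I expect the $c_2(X)$ contributions to drop out or to recombine; I should double-check which.

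The cleanest route is probably to \emph{guess} the closed form and verify it satisfies (i)--(iv), rather than to iterate by hand. So I would posit
\begin{align*}
m_{\beta_1,\beta_2}=\sum_{\begin{subarray}{c}k_1,k_2\in\mathbb{Z}_{>0}\\ \mathrm{g.c.d.}(k_1,k_2)=1\end{subarray}}\ \sum_{\begin{subarray}{c}\gamma_1+\gamma_2=\beta_1+\beta_2\\ k_i\mid\gamma_i,\ \gamma_i\ \text{``aligned''}\end{subarray}}\sum_{a,b}\frac{1}{k_1^2k_2^2}\,n_{0,\gamma_1/k_1}(S_a)\,g^{ab}\,n_{0,\gamma_2/k_2}(S_b),
\end{align*}
with the precise index set dictated by matching against \eqref{constraint:equiv} — this is essentially the ``counting pairs of multiple covers that meet'' interpretation of $m$. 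Then: check rule (i) (manifest by symmetry of $g^{ab}$ and the sum), rule (ii) (vanishing when a degree is nonpositive, since then one of the $n_{0,\gamma_i/k_i}$ factors is zero), and rules (iii), (iv) by a direct though slightly fiddly index manipulation — splitting the sum according to whether $k_1<k_2$, $k_1>k_2$ or $k_1=k_2=1$ reproduces the three terms of (iii), and the $k_1=k_2$ (non-coprime-unless-$1$) boundary contributes the diagonal correction in (iv). Uniqueness of the solution to (i)--(iv) (which holds by induction on $\deg(\beta_1)+\deg(\beta_2)$, each rule strictly decreasing it on the right) then forces $m$ to equal this closed form.

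Having the closed form for $m_{\beta_1,\beta_2}$, I substitute into the right side of \eqref{constraint}:
\begin{align*}
\frac12\sum_{\begin{subarray}{c}\beta_1+\beta_2=\beta\\ \beta_1,\beta_2>0\end{subarray}}(\alpha\cdot\beta_1)(\alpha\cdot\beta_2)\,m_{\beta_1,\beta_2},
\end{align*}
reindex by setting $\beta_i=k_i\delta_i$ with $\delta_i=\gamma_i/k_i$, use $(\alpha\cdot\beta_i)=k_i(\alpha\cdot\delta_i)$ — so the factor $k_1k_2/(k_1^2k_2^2)=1/(k_1k_2)$ combines with the two $k_i$'s to give precisely the $1/(k_1^2k_2^2)$ of the target after re-expressing in terms of $(\alpha\cdot\beta_1)(\alpha\cdot\beta_2)$ as written in \eqref{constraint:equiv} — and check the outer combinatorial factor (the $\tfrac12$, together with the overcounting from ordered versus unordered $(\beta_1,\beta_2)$ and from the $(k_1,k_2)\leftrightarrow(k_2,k_1)$ swap) matches. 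This yields \eqref{constraint:equiv}. The main obstacle I anticipate is purely combinatorial: getting the index set in the closed form for $m$ exactly right (which divisibility constraints, whether $\gamma_1$ and $\gamma_2$ are independently summed or constrained) so that rules (iii) and (iv) check out on the nose, including the correct treatment of the $n_{0,\beta}(c_2(X))$ term — this is where \cite[pp.~10--11]{KP} should be followed closely. Once the closed form is pinned down, the passage to \eqref{constraint:equiv} is a mechanical substitution.
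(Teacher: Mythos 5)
Your overall strategy (unwind the Euclidean recursion for the meeting invariants, expect coprime pairs and a Stern--Brocot-type count to emerge, and use $\alpha\cdot\beta=0$ to dispose of the $c_2(X)$ term) points in the right direction, but the proposal leaves its central object undetermined, and the candidate you do write down is wrong. Your guessed closed form for $m_{\beta_1,\beta_2}$ is a sum over decompositions $\gamma_1+\gamma_2=\beta_1+\beta_2$, so as written it depends only on $\beta_1+\beta_2$; the paper's own computations refute this (for local $\mathbb{P}^1\times\mathbb{P}^1$ one has $m_{(0,1),(2,1)}=2$ while $m_{(0,2),(2,0)}=0$, both with sum $(2,2)$). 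The correct closed form for an individual non-proportional pair is indexed by factorizations $\beta_1=p\gamma_1+r\gamma_2$, $\beta_2=q\gamma_1+s\gamma_2$ with $ps-qr=1$ and $p,q,r,s\geqslant 0$, i.e.\ by more data than a decomposition of $\beta$; the summation over decompositions of $\beta$ with coprime weights $k_1=p+q$, $k_2=r+s$ only appears \emph{after} one forms the weighted sum $\sum(\alpha\cdot\beta_1)(\alpha\cdot\beta_2)m_{\beta_1,\beta_2}$ and uses $\alpha\cdot\beta=0$ to show that the weight collapses to $(\alpha\cdot\beta_1')(\alpha\cdot\beta_2')/(k_1^2k_2^2)$ independently of $(p,q,r,s)$. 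That collapse is the key computation, and it is absent from your plan (you flag the index set as ``dictated by matching against the target,'' which is circular). Likewise, rule (iv) and its $n_{0,\beta}(c_2(X))$ term cannot be reproduced by your ansatz at all; the resolution is not that it ``cancels'' inside $m$, but that diagonal cases only arise from proportional pairs $\beta_1\propto\beta_2$, which are annihilated by the weight $(\alpha\cdot\beta_1)(\alpha\cdot\beta_2)$ when $\alpha\cdot\beta=0$ --- so rule (iv) should never be invoked in the first place.

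The paper avoids all of this by never computing individual $m$'s: it introduces the weighted sums $a_{k_1,k_2}=\sum_{\beta_1+\beta_2=\beta,\,k_i|\beta_i}\tfrac{1}{k_1^2k_2^2}(\alpha\cdot\beta_1)(\alpha\cdot\beta_2)m_{\beta_1/k_1,\beta_2/k_2}$ and the analogous $b_{k_1,k_2}$ with the bilinear $n_0$-products, observes that proportional pairs drop out so only rule (iii) applies, and a change of variables yields the clean recursion $a_{k_1,k_2}=b_{k_1,k_2}+a_{k_1,k_1+k_2}+a_{k_1+k_2,k_2}$; iterating and using that every coprime pair is reached exactly once gives $a_{1,1}=\sum_{\gcd(k_1,k_2)=1}b_{k_1,k_2}$, which is the lemma. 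I would recommend either adopting this formulation or, if you insist on a closed form for $m$, actually proving the $SL_2$-indexed one and carrying out the weight computation above; as it stands the proposal defers exactly the steps where the content lies.
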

\begin{proof}
For each $k_1, k_2$ with $\mathrm{g.c.d.}(k_1, k_2)=1$, we set 
\begin{align*}
a_{k_1, k_2}&:=\sum_{\begin{subarray}{c}
\beta_1+\beta_2=\beta \\
k_1|\beta_1, k_2|\beta_2 \end{subarray}}
\frac{1}{k_1^2 k_2^2}(\alpha \cdot \beta_1)(\alpha \cdot \beta_2)\,m_{\beta_1/k_1, \beta_2/k_2}, \\
b_{k_1, k_2}&:=\sum_{\begin{subarray}{c}
\beta_1+\beta_2=\beta \\
k_1|\beta_1, k_2|\beta_2 \end{subarray}}
\sum_{a, b}
\frac{1}{k_1^2 k_2^2}(\alpha \cdot \beta_1)(\alpha \cdot \beta_2)\,n_{0, \beta_1/k_1}(S_a)g^{ab}n_{0, \beta_2/k_2}(S_b).
\end{align*}
Since $\alpha \cdot \beta=0$, the only terms of
$\beta_1, \beta_2$ which are non-proportional contribute 
to the above sums. 
Therefore by the recursion for the meeting numbers \eqref{meeting invs}, 
we obtain 
\begin{align}\notag
a_{k_1, k_2}-b_{k_1, k_2}
&=\sum_{\begin{subarray}{c}
\beta_1+\beta_2=\beta \\
k_1|\beta_1, k_2|\beta_2 \end{subarray}}
\frac{1}{k_1^2 k_2^2}(\alpha \cdot \beta_1)(\alpha \cdot \beta_2)\,m_{\beta_1/k_1-\beta_2/k_2, \beta_2/k_2} \\
&+\sum_{\begin{subarray}{c}
\beta_1+\beta_2=\beta \\
\label{ab}
k_1|\beta_1, k_2|\beta_2 \end{subarray}}
\frac{1}{k_1^2 k_2^2}(\alpha \cdot \beta_1)(\alpha \cdot \beta_2)\,m_{\beta_1/k_1, \beta_2/k_2-\beta_1/k_1}.
\end{align}
In the above first sum, 
we set 
\begin{align*}
\beta_1':=k_1 \left(\frac{\beta_1}{k_1}- \frac{\beta_2}{k_2} \right), \quad 
\beta_2':=(k_1+k_2) \frac{\beta_2}{k_2}.
\end{align*}
Then 
we have 
$k_1|\beta_1'$, $(k_1+k_2)|\beta_2'$, and 
\begin{align*}
\beta_1'+\beta_2'=\beta, \quad 
m_{\beta_1/k_1-\beta_2/k_2, \beta_2/k_2}=m_{\beta_1'/k_1, \beta_2'/(k_1+k_2)}.
\end{align*}
Conversely, suppose that $\beta_1'+\beta_2'=\beta$
is given such that $k_1|\beta_1'$ and $(k_1+k_2)|\beta_2'$.
Then we have $\beta_1+\beta_2=\beta$, 
where $\beta_1$, $\beta_2$ are given by   
\begin{align}\label{rel:beta}
\beta_1=k_1\left(\frac{\beta_1'}{k_1}+
\frac{\beta_2'}{k_1+k_2}\right), \quad 
\beta_2=k_2 \frac{\beta_2'}{k_1+k_2},
\end{align}
which satisfy $k_1|\beta_1$, $k_2|\beta_2$.
Therefore we can write the first sum in (\ref{ab})
as a sum over $\beta_1'+\beta_2'=\beta$
with $k_1|\beta_1'$ and $(k_1+k_2)|\beta_2'$. 
Moreover using the relation $\alpha \cdot \beta=0$
and (\ref{rel:beta}), we have 
\begin{align*}
\frac{1}{k_1^2 k_2^2}(\alpha \cdot \beta_1)(\alpha \cdot \beta_2)
=\frac{1}{k_1^2 (k_1+k_2)^2}(\alpha \cdot \beta_1')(\alpha \cdot \beta_2'). 
\end{align*}
Therefore the first sum of (\ref{ab})
is 
\begin{align*}
\sum_{\begin{subarray}{c}
\beta_1'+\beta_2'=\beta \\
k_1|\beta_1', (k_1+k_2)|\beta_2' \end{subarray}}
\frac{1}{k_1^2 (k_1+k_2)^2}(\alpha \cdot \beta_1')(\alpha \cdot \beta_2')\,m_{\beta_1'/k_1, \beta_2'/(k_1+k_2)}
=a_{k_1, k_1+k_2}. 
\end{align*}
Similarly the second sum in (\ref{ab}) 
equals to $a_{k_1+k_2, k_2}$. 
Therefore we obtain the relation
\begin{align*}
a_{k_1, k_2}=b_{k_1, k_2}+a_{k_1, k_1+k_2}+a_{k_1+k_2, k_2}. 
\end{align*} 
Since the right hand side of (\ref{constraint}) 
is $a_{1, 1}/2$, 
the identity (\ref{constraint}) is equivalent to 
%\begin{align*}
%n_{0, \beta}(\alpha^2)=\frac{1}{2} 
%\sum_{k_1,k_2 \in \mathbb{Z}_{>0} }
%\sum_{\begin{subarray}{c}
%\beta_1+\beta_2=\beta \\
%k_1|\beta_1, k_2|\beta_2 \end{subarray}}
%b_{k_1, k_2}.
%\end{align*}
%\begin{YC}
%It should be the below equality, not the above one ?
%\end{YC}
\begin{align*}
n_{0, \beta}(\alpha^2)=\frac{1}{2} 
\sum_{k_1,k_2 \in \mathbb{Z}_{>0} }
b_{k_1, k_2}.
\end{align*}
Here in the above sum, $(k_1, k_2)$ are those 
which appear by iterated operations 
of either $(k_1, k_2) \mapsto (k_1, k_1+k_2)$
or $(k_1, k_2) \mapsto (k_1+k_2, k_2)$. 
By an elementary Euclidean algorithm
argument, every coprime pair $(k_1, k_2)$ 
appears exactly once by the above operations. 
Therefore the identity (\ref{constraint}) is equivalent to 
(\ref{constraint:equiv}). 
\end{proof}

By the above lemma, 
we can give an equivalent form of (\ref{constraint}) in terms of 
genus zero Gromov-Witten invariants. 
\begin{prop}\label{prop:GW}
The identity (\ref{constraint}) for all $(\alpha, \beta)$
with $\alpha \cdot \beta=0$ is equivalent to 
the following identity for all $(\alpha, \beta)$
with $\alpha \cdot \beta=0$,
\begin{align}\label{constraint:2}
\mathrm{GW}_{0, \beta}(\alpha^2)
=\frac{1}{2}\sum_{\beta_1+\beta_2=\beta}\sum_{a, b}
(\alpha \cdot \beta_1)(\alpha \cdot \beta_2)\,\mathrm{GW}_{0, \beta_1}(S_a)g^{ab}\mathrm{GW}_{0, \beta_2}(S_b), 
\end{align}
where $\{S_*\}$ is a basis of the free part of $H^4(X,\mathbb{Z})$.
\end{prop}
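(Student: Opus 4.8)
The plan is to reduce the statement, via Lemma~\ref{lem:GV0}, to showing that \eqref{constraint:equiv} holds for all $(\alpha,\beta)$ with $\alpha\cdot\beta=0$ precisely when \eqref{constraint:2} does, and then to pass between the two identities by a divisor-sum manipulation built on the defining relation $\mathrm{GW}_{0,\gamma}(\delta)=\sum_{d\geqslant1,\ d\mid\gamma}d^{-2}\,n_{0,\gamma/d}(\delta)$ between genus zero GW and GV type invariants (Section~\ref{section on GV/GW}). First I would record its inverse $n_{0,\gamma}(\delta)=\sum_{d\mid\gamma}\mu(d)d^{-2}\,\mathrm{GW}_{0,\gamma/d}(\delta)$, valid because $d\mapsto d^{-2}$ is completely multiplicative, and note that $\overline{M}_{0,1}(X,0)=\emptyset$ makes all degree-zero terms vanish, so the sums over $\beta_1+\beta_2=\beta$ are effectively over effective $\beta_i>0$.

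The heart of the argument is to substitute $\mathrm{GW}=\sum n$ into both sides of \eqref{constraint:2}. The left-hand side becomes simply $\sum_{m\mid\beta}m^{-2}\,n_{0,\beta/m}(\alpha^2)$. For the right-hand side, writing $\beta_i=m_i\gamma_i$ with $m_i\geqslant1$ and $\gamma_i>0$ effective turns it into
\[
\frac12\sum_{\substack{m_1,m_2\geqslant1,\ \gamma_1,\gamma_2>0\\ m_1\gamma_1+m_2\gamma_2=\beta}}\ \sum_{a,b}\frac{(\alpha\cdot\gamma_1)(\alpha\cdot\gamma_2)}{m_1 m_2}\,n_{0,\gamma_1}(S_a)\,g^{ab}\,n_{0,\gamma_2}(S_b);
\]
I would then reindex by $m:=\gcd(m_1,m_2)$, $k_i:=m_i/m$ (so that $\gcd(k_1,k_2)=1$) and $\delta_i:=k_i\gamma_i$, which is a bijection onto the data $\{\,m\mid\beta,\ \gcd(k_1,k_2)=1,\ \delta_1+\delta_2=\beta/m,\ k_i\mid\delta_i\,\}$, under which $m_1m_2=m^2k_1k_2$ and $(\alpha\cdot\gamma_i)=(\alpha\cdot\delta_i)/k_i$. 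Collecting the terms shows the right-hand side equals $\sum_{m\mid\beta}m^{-2}\,R(\beta/m)$, where $R(\gamma)$ denotes the right-hand side of \eqref{constraint:equiv} at $(\alpha,\gamma)$, with the overall factor $\tfrac12$ matching.

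Putting these together, \eqref{constraint:2} at $(\alpha,\beta)$ is equivalent to $\sum_{m\mid\beta}m^{-2}E(\beta/m)=0$, where $E(\gamma):=n_{0,\gamma}(\alpha^2)-R(\gamma)$, so that \eqref{constraint:equiv} at $(\alpha,\gamma)$ says exactly $E(\gamma)=0$. If \eqref{constraint:equiv} holds at every $(\alpha,\gamma)$ with $\alpha\cdot\gamma=0$, then $E(\beta/m)=0$ for all $m\mid\beta$ (as $\alpha\cdot(\beta/m)=0$), and \eqref{constraint:2} follows; conversely, if \eqref{constraint:2} holds at every such pair, then $\sum_{m\mid\gamma}m^{-2}E(\gamma/m)=0$ for every $\gamma$ of the form $\beta/d$, and inverting the arithmetic function $n\mapsto n^{-2}$ (equivalently, by induction on the divisibility of $\gamma$) forces $E(\beta)=0$, which is \eqref{constraint:equiv}. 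The argument is formal once set up; the only step requiring real care is the reindexing in the middle paragraph — keeping the divisor conditions $k_i\mid\delta_i$, the intersection factors $\alpha\cdot\beta_i$, and the rational weights aligned through the two successive rescalings $\beta_i=m_i\gamma_i$ and $\delta_i=k_i\gamma_i$.
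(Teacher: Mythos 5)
Your proof is correct and follows essentially the same route as the paper: both reduce to Lemma~\ref{lem:GV0} and then pass between \eqref{constraint:equiv} and \eqref{constraint:2} via the multiple cover formula, with your reindexing by $m=\mathrm{g.c.d.}(m_1,m_2)$, $k_i=m_i/m$ being exactly the paper's substitution $\beta_i'=k\beta_i$, $k_i'=kk_i$ read in the opposite direction. Your explicit formulation of both directions through the relation $\sum_{m\mid\beta}m^{-2}E(\beta/m)=0$ and its inversion is a slightly cleaner packaging of the paper's ``apply the computation backwards and induct on divisibility,'' but it is the same argument.
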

\begin{proof}
Suppose that (\ref{constraint}) holds for all $(\alpha, \beta)$
with $\alpha \cdot \beta=0$.
Then by Lemma~\ref{lem:GV0}, we have 
\begin{align*}
\mathrm{GW}_{0, \beta}(\alpha^2)
&=
\sum_{k\geqslant 1, k|\beta}\frac{1}{k^2} n_{0, \beta/k}(\alpha^2) \\
&=\frac{1}{2}\sum_{k\geqslant 1, k|\beta}
\sum_{\begin{subarray}{c}k_1 \in \mathbb{Z}_{>0}, k_2 \in \mathbb{Z}_{>0} \\
\mathrm{g.c.d.}(k_1, k_2)=1\end{subarray}
}\sum_{\begin{subarray}{c}
\beta_1+\beta_2=\beta/k \\
k_1|\beta_1, k_2|\beta_2 \end{subarray}}
\sum_{a, b}\frac{1}{k^2k_1^2 k_2^2}(\alpha \cdot \beta_1)(\alpha \cdot \beta_2)
n_{0, \beta_1/k_1}(S_a)g^{ab}n_{0, \beta_2/k_2}(S_b) \\
&=\frac{1}{2}
\sum_{k_1' \in \mathbb{Z}_{>0}, k_2' \in \mathbb{Z}_{>0}}
\sum_{\begin{subarray}{c}
\beta_1'+\beta_2'=\beta \\
k_1'|\beta_1', k_2'|\beta_2' \end{subarray}}
\sum_{a, b}\frac{1}{k_1^{'2} k_2^{'2}}(\alpha \cdot \beta_1')(\alpha \cdot \beta_2')
n_{0, \beta_1'/k_1'}(S_a)g^{ab}n_{0, \beta_2'/k_2'}(S_b).
 \end{align*}
Here we have set $\beta_1'=k\beta_1$, $\beta_2'=k\beta_2'$
and $k_1'=k k_1$, $k_2'=k k_2$ in the third identity, 
where $\mathrm{g.c.d.}(k_1', k_2')=k$ so that there is no constraint 
on $\mathrm{g.c.d.}(k_1', k_2')$ in the third identity. 
By the multiple cover formula, 
the last identity gives the desired identity (\ref{constraint:2}).

Conversely suppose (\ref{constraint:2}) holds for all 
$(\alpha, \beta)$ with $\alpha \cdot \beta=0$. 
Then by applying the above computation backwards, 
we can conclude (\ref{constraint}) 
by an induction on the divisibility of $\beta$.  
\end{proof}
\begin{thm}\label{constraint holds}
The identity \eqref{constraint} holds for any Calabi-Yau 4-fold, i.e. the formula in Conjecture \ref{main conj} is 
an identity of linear functions on $\alpha\in H^2(X)$.
\end{thm}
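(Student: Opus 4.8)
The plan is to reduce, via Proposition~\ref{prop:GW}, to the Gromov--Witten form~\eqref{constraint:2} of the constraint, and then to extract~\eqref{constraint:2} from one well-chosen instance of the genus-zero WDVV equation on $X$. We may assume $\beta$ is effective and nonzero: otherwise $\mathrm{GW}_{0,\beta}(\alpha^2)=0$, and since a sum of effective classes is effective, every summand on the right-hand side of~\eqref{constraint:2} vanishes as well. Throughout we work modulo torsion, on which all Gromov--Witten invariants depend. Write $\langle\gamma_1,\dots,\gamma_n\rangle_{0,\beta}$ for the descendent-free genus-zero $n$-point invariant, fix a homogeneous $\mathbb{Q}$-basis $\{T_e\}$ of $H^{\ast}(X,\mathbb{Q})$ with the $H^4$-part spanned by the classes $\{S_a\}$, and let $(g^{ef})$ be the inverse of the matrix $g_{ef}=\int_X T_e\cup T_f$. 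Note that $\overline{M}_{0,1}(X,\beta')$ has virtual dimension $2$, so $\mathrm{GW}_{0,\beta'}(T_e)$ vanishes unless $T_e\in H^4(X)$. The WDVV equations assert that the expression
\[
\sum_{\beta_1+\beta_2=\beta}\ \sum_{e,f}\langle\gamma_i,\gamma_j,T_e\rangle_{0,\beta_1}\,g^{ef}\,\langle T_f,\gamma_k,\gamma_l\rangle_{0,\beta_2}
\]
is independent of the way $\{1,2,3,4\}$ is split into two pairs $\{i,j\}\sqcup\{k,l\}$, where by convention the $\beta_i=0$ terms use classical triple products $\langle a,b,c\rangle_{0,0}=\int_X a\cup b\cup c$.

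I would apply this with $\gamma_1=\gamma_2=\alpha$ and $\gamma_3=\gamma_4=\mu$, where $\mu$ is an ample divisor class, so that $\mu\cdot\beta>0$. The point of this choice is that $\mathrm{GW}_{0,\beta}(\alpha^2)$ should emerge from the boundary term having a degree-zero component that carries the two marked points labelled by the $\alpha$'s (giving the classical factor $\int_X\alpha\cup\alpha\cup T_e$), together with the divisor factor $(\mu\cdot\beta)^2$ coming from the degree-$\beta$ component carrying the two $\mu$'s. Since $X$ is a Calabi--Yau fourfold, $\overline{M}_{0,3}(X,\beta')$ has virtual dimension $4$, and dimension counting forces the node class $T_e$ into $H^4(X)$ in every surviving term; the two external insertions on each side are then divisors, and the divisor axiom evaluates the three-point invariants, e.g.\ $\langle T_e,\mu,\mu\rangle_{0,\beta_2}=(\mu\cdot\beta_2)^2\,\mathrm{GW}_{0,\beta_2}(T_e)$ for $\beta_2>0$, and likewise on the $\alpha$-side. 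The $\beta_i=0$ contributions reduce, via the elementary identity $\sum_{e,f}\bigl(\int_X a\cup b\cup T_e\bigr)g^{ef}\,\mathrm{GW}_{0,\beta}(T_f)=\mathrm{GW}_{0,\beta}(a\cup b)$, to classical terms of the shapes $(\mu\cdot\beta)^2\,\mathrm{GW}_{0,\beta}(\alpha^2)$, $(\alpha\cdot\beta)^2\,\mathrm{GW}_{0,\beta}(\mu^2)$ and $(\alpha\cdot\beta)(\mu\cdot\beta)\,\mathrm{GW}_{0,\beta}(\alpha\cup\mu)$.

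Now impose $\alpha\cdot\beta=0$. This kills all the boundary terms except $(\mu\cdot\beta)^2\,\mathrm{GW}_{0,\beta}(\alpha^2)$, and it also lets me substitute $\alpha\cdot\beta_2=-(\alpha\cdot\beta_1)$ inside the remaining double sums; replacing the internal $\sum_{e,f}\mathrm{GW}_{0,\beta_1}(T_e)g^{ef}\mathrm{GW}_{0,\beta_2}(T_f)$ by $\sum_{a,b}\mathrm{GW}_{0,\beta_1}(S_a)g^{ab}\mathrm{GW}_{0,\beta_2}(S_b)$ (harmless, since one-point invariants vanish outside $H^4$), the two sides of WDVV collapse to
\[
(\mu\cdot\beta)\,\mathrm{GW}_{0,\beta}(\alpha^2)=-\sum_{\begin{subarray}{c}\beta_1+\beta_2=\beta\\ \beta_1,\beta_2>0\end{subarray}}(\alpha\cdot\beta_1)^2\,(\mu\cdot\beta_2)\sum_{a,b}\mathrm{GW}_{0,\beta_1}(S_a)\,g^{ab}\,\mathrm{GW}_{0,\beta_2}(S_b).
\]
Symmetrizing the right-hand side under $\beta_1\leftrightarrow\beta_2$ — legitimate because both $(\alpha\cdot\beta_i)^2$ and $\sum_{a,b}\mathrm{GW}_{0,\beta_1}(S_a)g^{ab}\mathrm{GW}_{0,\beta_2}(S_b)$ are symmetric — replaces $(\mu\cdot\beta_2)$ by $\tfrac12(\mu\cdot\beta_1+\mu\cdot\beta_2)=\tfrac12(\mu\cdot\beta)$, so the factor $\mu\cdot\beta$ cancels from both sides; rewriting $-(\alpha\cdot\beta_1)^2=(\alpha\cdot\beta_1)(\alpha\cdot\beta_2)$ then gives exactly~\eqref{constraint:2}. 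In particular the resulting identity is independent of the auxiliary class $\mu$, as it must be. By Proposition~\ref{prop:GW} this proves~\eqref{constraint}; and, as observed just before the theorem, \eqref{constraint} is precisely the condition that the right-hand side of the formula in Conjecture~\ref{main conj} has no pole along $\alpha\cdot\beta=0$, so that formula is an identity of linear functions of $\alpha\in H^2(X)$.

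I expect the one genuinely delicate point to be the careful treatment of the $\beta_i=0$ boundary terms of WDVV — tracking the correct combinatorial coefficients of the classical triple products and verifying that, after imposing $\alpha\cdot\beta=0$, it is precisely the term $(\mu\cdot\beta)^2\mathrm{GW}_{0,\beta}(\alpha^2)$ that survives — together with the dimension count that pins the node class to $H^4$. The remaining ingredients (the choice $(\alpha,\alpha,\mu,\mu)$, the divisor-axiom reductions, the existence of an ample $\mu$, and the symmetrization trick that removes $\mu$) are routine.
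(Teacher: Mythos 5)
Your proposal is correct and follows essentially the same route as the paper: reduce to the Gromov--Witten form \eqref{constraint:2} via Proposition~\ref{prop:GW}, then extract that identity from the genus-zero WDVV equation with four divisor insertions, using the dimension count that pins the node class to $H^4(X)$ and the divisor axiom. The only (minor) difference is that you insert $(\alpha,\alpha,\mu,\mu)$ with $\mu$ ample and cancel the factor $\mu\cdot\beta$ at the end, whereas the paper inserts pairs of basis divisors $(S_i,S_i,S_j,S_j)$, reads off the identity for $\alpha=(S_i\cdot\beta)S_j-(S_j\cdot\beta)S_i$, and then varies over all such pairs to cover the whole hyperplane $\alpha\cdot\beta=0$.
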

\begin{proof}
We assume $H^{\mathrm{odd}}(X,\mathbb{Q})=0$ for simplicity and the same argument works in the general case.
Let $S_0=1,S_1,\cdots,S_{m-1},S_{m}=[\mathrm{pt}]$ be a basis of $H^*(X,\mathbb{Q})$, where $S_1,\cdots, S_r\in H^2(X,\mathbb{Q})$, 
$S_{r+1},\cdots, S_l\in H^4(X,\mathbb{Q})$ and $S_{l+1},\cdots, S_{m-1}\in H^6(X,\mathbb{Q})$.
Write $\gamma:=\sum_{i=0}^m t_iS_i$ where $t_i$ are formal variables.
The Gromov-Witten potential (ref. \cite[(8.29)]{CoxKatz}) is 
\begin{align*}
\Phi&:=\sum_{n=0}^{\infty}\sum_{\beta\in H_2(X,\mathbb{Z})}\frac{1}{n!}\mathrm{GW}_{0, \beta}(\gamma^{\otimes n})\,q^{\beta} \\
&=\frac{1}{6}\int_X\gamma^3+\sum_{b\geqslant 0}\sum_{0\neq\beta\in H_2(X,\mathbb{Z})}\frac{1}{b!}\mathrm{GW}_{0,\beta}\left(\left(\sum_{i=r+1}^m t_iS_i\right)^{\otimes b}\right)e^{\int_\beta\left(\sum_{i=1}^r t_iS_i\right)}\,q^{\beta} \\
&=\frac{1}{6}\int_X\gamma^3+\sum_{i=r+1}^{l}t_i
\sum_{0\neq\beta\in H_2(X,\mathbb{Z})}\mathrm{GW}_{0,\beta}(S_i)e^{\int_\beta\left(\sum_{i=1}^r t_iS_i\right)}\,q^{\beta},
\end{align*}
where we use CY4 condition in the last equality. 

The WDVV equation (ref. \cite{W,DVV}, \cite[Theorem 8.2.4]{CoxKatz}) is 
$$\sum_{a,b}\Phi_{ija}g^{ab}\Phi_{bkl}=\sum_{a,b}\Phi_{jka}g^{ab}\Phi_{bil}, $$
where $\Phi_{abc}=\frac{\partial\Phi}{\partial t_a\partial t_b\partial t_c}$ denotes the third partial derivative of $\Phi$ with respect to $t_a, t_b, t_c$.

Note that
\begin{align*}
&\Phi_{11a}=\int_XS_1^2S_a+\sum_{0\neq\beta\in H_2(X,\mathbb{Z})}\mathrm{GW}_{0,\beta}(S_a)(S_1\cdot\beta)^2e^{\int_\beta(\sum_{i=1}^r t_iS_i)}\,q^{\beta}, \\
&\Phi_{22b}=\int_XS_2^2S_b+\sum_{0\neq\beta\in H_2(X,\mathbb{Z})}\mathrm{GW}_{0,\beta}(S_b)(S_2\cdot\beta)^2e^{\int_\beta(\sum_{i=1}^r t_iS_i)}\,q^{\beta}, \\
&\Phi_{12a}=\int_XS_1S_2S_a+\sum_{0\neq\beta\in H_2(X,\mathbb{Z})}\mathrm{GW}_{0,\beta}(S_a)(S_1\cdot\beta)(S_2\cdot\beta)e^{\int_\beta(\sum_{i=1}^r t_iS_i)}\,q^{\beta}. 
\end{align*}
By taking $i=j=1$, $k=l=2$ in the WDVV equation, we obtain  
\begin{align*}&\quad \mathrm{GW}_{0,\beta}\Big(\big((S_1\cdot\beta)S_2-(S_2\cdot\beta)S_1\big)^2\Big) \\
&=\sum_{r+1\leqslant a,b\leqslant l}\sum_{\beta_1+\beta_2=\beta}\mathrm{GW}_{0,\beta_1}(S_a)g^{ab}\mathrm{GW}_{0,\beta_2}(S_b)(S_1\cdot\beta_1)(S_2\cdot\beta_1)(S_1\cdot\beta_2)(S_2\cdot\beta_2) \\
&\quad  -\sum_{r+1\leqslant a,b\leqslant l}\sum_{\beta_1+\beta_2=\beta}\mathrm{GW}_{0,\beta_1}(S_a)g^{ab}\mathrm{GW}_{0,\beta_2}(S_b)(S_1\cdot\beta_1)^2(S_2\cdot\beta_2)^2 \\
&=\sum_{r+1\leqslant a,b\leqslant l}\sum_{\beta_1+\beta_2=\beta}\mathrm{GW}_{0,\beta_1}(S_a)g^{ab}\mathrm{GW}_{0,\beta_2}(S_b)(S_1\cdot\beta_1)(S_2\cdot\beta_1)(S_1\cdot\beta_2)(S_2\cdot\beta_2) \\
&\quad  -\sum_{r+1\leqslant a,b\leqslant l}\sum_{\beta_1+\beta_2=\beta}\mathrm{GW}_{0,\beta_1}(S_a)g^{ab}\mathrm{GW}_{0,\beta_2}(S_b)(S_1\cdot\beta_2)^2(S_2\cdot\beta_1)^2.
\end{align*}
Let $\alpha:=(S_1\cdot\beta)S_2-(S_2\cdot\beta)S_1\in H^2(X)$. By taking sum of the above two equations, we obtain
\begin{align*}\quad \mathrm{GW}_{0,\beta}(\alpha^2)=\frac{1}{2}\sum_{\beta_1+\beta_2=\beta}\sum_{a,b}(\alpha\cdot\beta_1)(\alpha\cdot\beta_2)\mathrm{GW}_{0,\beta_1}(S_a)g^{ab}\mathrm{GW}_{0,\beta_2}(S_b). \end{align*}
By considering other $i,j,k=l\in\{1,2,\cdots,r\}$, we can conclude for any $\alpha\in H^2(X)$ with $\alpha\cdot \beta=0$, the above equality holds.
Combining with Proposition \ref{prop:GW}, we are done.
\end{proof}
The following example shows that \eqref{constraint} gives nontrivial constraints on GW invariants of Calabi-Yau 4-folds whose Picard numbers are bigger than one.
\begin{exam}\label{exam on constrain}
When $X=\mathrm{Tot}_{\mathbb{P}^1\times \mathbb{P}^1}(\oO(-1,-1)\oplus \oO(-1,-1))$. The formula \eqref{constraint} (equivalently \eqref{constraint:2} by Proposition \ref{prop:GW}) gives a recursion formula for genus zero GW invariants of $X$:
\begin{align*}
\mathrm{GW}_{0,(d_1,d_2)}([\mathrm{pt}])=\frac{1}{2}\sum_{\begin{subarray}{c}d_1'+d_1''=d_1\\  d_2'+d_2''=d_2 \end{subarray}}
\frac{(d_1'd_2''-d_2'd_1'')^2}{d_1d_2}\mathrm{GW}_{0,(d_1',d_2')}([\mathrm{pt}])
\cdot \mathrm{GW}_{0,(d_1'',d_2'')}([\mathrm{pt}]).
\end{align*}
In \cite[Proposition 3, pp. 23]{KP}, GW invariants are explicitly computed by torus localization: 
\begin{align*}
\mathrm{GW}_{0,(d_1,d_2)}([\mathrm{pt}])=\frac{1}{(d_1+d_2)^2}
\dbinom{d_1+d_2}{d_1}^2.
\end{align*}
Combining them, we obtain a nontrivial identity 
of binomial coefficients for any $d_1,d_2\in \mathbb{Z}_{\geqslant 0}$,
\begin{align*}\frac{2d_1d_2}{(d_1+d_2)^2}
\dbinom{d_1+d_2}{d_1}^2=
\sum_{\begin{subarray}{c}d_i'+d_i''=d_i\\  d_i',d_i''>0, \,\, i=1,2 \end{subarray}}
\frac{(d_1'd_2''-d_2'd_1'')^2}{(d_1'+d_2')^2(d_1''+d_2'')^2}
\dbinom{d_1'+d_2'}{d_1'}
\dbinom{d_1''+d_2''}{d_1''}.
\end{align*}
\end{exam}

\subsection{Other $\DT_4$ descendent invariants}
In Conjecture \ref{g=0 one dim sheaf conj}, \ref{main conj}, we use $\DT_4$ invariants with $\ch_3$ and $\ch_4$ insertions to give 
sheaf theoretical interpretations of all genus GV type invariants of Calabi-Yau 4-folds. 
One can also consider other descendent invariants, though they do not seem to contain more information. 

For instance, with the normalized universal sheaf $\mathbb{F}_{\mathrm{norm}}$ \eqref{norm univ sheaf}, 
we can consider the $\ch_5$-descendent insertions
\begin{align*}
\tau_2 \colon H^{0}(X, \mathbb{Z})\to H^{2}(M_1(X,\beta), \mathbb{Q}), \quad 
\tau_2(r):=(\pi_{M})_{\ast}(\pi_X^{\ast}r \cup\ch_{5}(\mathbb{F}_{\mathrm{norm}}) ),
\end{align*}
and their $\DT_4$ invariants
\begin{align*} %\label{intro dt4 for one dim stable}
\langle\tau_2(r) \rangle_{\beta}:=\int_{[M_{1}(X,\beta)]^{\rm{vir}}}\tau_2(r). \end{align*}
Using an argument in the ideal geometry (as in Section \ref{heuristic argue}), we obtain the following conjectural formula
\begin{align*}
\langle\tau_2(1) \rangle_{\beta}=-\frac{1}{12}n_{0,\beta}(c_2(X)),
\end{align*}
for the same choice of orientations as in Conjecture \ref{g=0 one dim sheaf conj}. One can easily 
check the formula holds in examples considered in Section \ref{sect on examples} and 
the corresponding choice of orientations is unique and coincides with those in Conjecture \ref{g=0 one dim sheaf conj}, \ref{main conj}.

\section{Computations of Examples}\label{sect on examples}

\subsection{Elliptic fibrations}\label{sect on elliptic fib}
For $Y=\mathbb{P}^3$, we take general elements
\begin{align*}
u \in H^0(Y, \oO_Y(-4K_Y)), \quad 
v \in H^0(Y, \oO_Y(-6K_Y)).
\end{align*}
Let $X$
be a Calabi-Yau 4-fold with an elliptic fibration
\begin{align}\label{elliptic CY4}
\pi \colon X \to Y
\end{align}
given by the equation
\begin{align*}
zy^2=x^3 +uxz^2+vz^3
\end{align*}
in the $\mathbb{P}^2$-bundle
\begin{align*}
\mathbb{P}(\oO_Y(-2K_Y) \oplus \oO_Y(-3K_Y) \oplus \oO_Y) \to Y,
\end{align*}
where $[x:y:z]$ are homogeneous coordinates for the above
projective bundle. A general fiber of
$\pi$ is a smooth elliptic curve, and any singular
fiber is either a nodal or cuspidal plane curve.
Moreover, $\pi$ admits a section $\iota$ whose image
corresponds to the fiber point $[0: 1: 0]$.

Let $h$ be a hyperplane in $\mathbb{P}^3$, $f$ be a general fiber of $\pi \colon X\rightarrow Y$ and set
\begin{align}\label{div:BE}
B=:\pi^{\ast}h, \ D:=\iota(\mathbb{P}^3)\in H_{6}(X,\mathbb{Z}).
\end{align}
We consider moduli spaces $M_{1}(X, r[f])$ of 
one dimensional stable sheaves on $X$ in the multiple fiber classes $r[f]$ ($r\geqslant 1$).
\begin{lem}$($\cite[Lemma 2.1]{CMT1}$)$\label{lem:fib:isom}
For any $r \in \mathbb{Z}_{\geqslant 1}$, there is an isomorphism
$$M_{1}(X,r[f]) \cong X, $$
under which the virtual
class of $M_{1}(X,r[f])$ is given by
\begin{align}\label{Mvir}
[M_{1}(X,r[f])]^{\rm{vir}}=\pm \mathrm{PD}(c_3(X)) \in H_2(X, \mathbb{Z}),
\end{align}
where the sign corresponds to a choice of orientations in defining the LHS.
\end{lem}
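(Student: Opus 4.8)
The plan is to establish the isomorphism $M_1(X, r[f]) \cong X$ first, and then compute the virtual class by identifying the obstruction theory. For the isomorphism, I would argue that any stable sheaf $F$ with $[F] = r[f]$ and $\chi(F) = 1$ must be scheme-theoretically supported on a single fiber of $\pi$. Indeed, since $f^2 = 0$ in an appropriate sense and the fibers are the only connected curves in class $r[f]$ up to algebraic equivalence, the support of $F$ lies in $\pi^{-1}(p)$ for some $p \in Y$. On a fiber $X_p = \pi^{-1}(p)$, which is an irreducible plane cubic (smooth elliptic, nodal, or cuspidal), a one-dimensional stable sheaf $F$ with $\chi(F) = 1$ of multiplicity $r$ along the fiber is, by the classification of stable sheaves on integral curves of arithmetic genus one (work of Burban--Kreussler, or Friedman--Morgan--Witten for the smooth case), a rank-$r$ degree-one stable vector bundle on $X_p$ when $X_p$ is smooth, and on the singular fibers one still gets a single stable sheaf. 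The determinant-type construction (Atiyah's description on smooth elliptic curves, spectral/Fourier--Mukai description in general) gives that the moduli of such sheaves on a fixed integral genus-one curve is a single point, identified with $\Pic^1$ of that curve, hence with the fiber itself via the section $\iota$. Letting $p$ vary over $Y$ then yields $M_1(X, r[f]) \cong X$, with the universal sheaf given by a suitable Fourier--Mukai-type kernel. This is essentially \cite[Lemma 2.1]{CMT1}, so I would cite it and only sketch the argument.

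For the virtual class, the key point is that $M_1(X, r[f])$ should be cut out inside a smooth ambient space with an explicitly identifiable obstruction bundle, so that one of the two computational recipes reviewed in Section~\ref{subsec:DT4} applies. Concretely, I would compute $\Ext^i_X(F, F)$ for $F$ a stable sheaf supported on a fiber. Using that $F = \iota_{X_p \ast} G$ for $G$ a stable sheaf on $X_p$ (where $\iota_{X_p}\colon X_p \hookrightarrow X$), and the fact that the normal bundle of $X_p$ in $X$ is trivial of rank $3$ (since $X_p$ moves in the $3$-dimensional family $Y = \mathbb{P}^3$ with trivial normal bundle, as $f \cdot B = 0$), a local-to-global spectral sequence / the formula for $\Ext$ groups of pushforwards along a regular embedding with trivial normal bundle gives
\[
\Ext^i_X(F, F) \cong \bigoplus_{j} \Ext^{i-j}_{X_p}(G, G) \otimes \wedge^j N_{X_p/X}
\cong \bigoplus_j \Ext^{i-j}_{X_p}(G, G) \otimes \wedge^j \mathbb{C}^3.
\]
Since $G$ is a stable sheaf of rank $r$ and $\chi=1$ on an integral genus-one curve, $\Ext^0_{X_p}(G,G) = \mathbb{C}$ and $\Ext^1_{X_p}(G,G) = \mathbb{C}$ (one-dimensional by stability and Riemann--Roch on the genus-one curve). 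Plugging in, $\Ext^1_X(F,F)$ is $4$-dimensional (matching $\dim X = 4$, confirming smoothness of the moduli space) and $\Ext^2_X(F,F)$ is $6$-dimensional. Then I would identify the obstruction bundle $\mathrm{Ob}$, which has a natural maximal isotropic half, and show that $[M_1(X, r[f])]^{\mathrm{vir}}$ equals (up to the sign coming from orientation) the half-Euler class, which works out to $\mathrm{PD}(c_3(X))$ — the third Chern class appearing because, after accounting for the trivial summand coming from $\wedge^3 N$, the relevant bundle is $TX$ itself (or $\Omega_X$), whose top relevant Chern class restricted appropriately is $c_3$. The relevant self-extension group $\Ext^2_{+}$ or equivalently the maximal isotropic $V_F$ will turn out to be (a twist of) $T_X|_{M_1}$ under the identification $M_1 \cong X$, and the half-Euler-class recipe then gives $e(\mathrm{Ob}, Q) = \pm c_3(X)$.

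The main obstacle is the precise bookkeeping in the second step: getting the correct identification of the obstruction bundle as a specific bundle on $X$ (not merely its rank), keeping track of which $\Ext^2_{X_p}(G,G)$ contributes and with what twist, and verifying that the $Q$-isotropic half is the natural geometric one so that the half-Euler class is genuinely $c_3(X)$ rather than some other degree-$3$ characteristic class. This requires care with the Serre duality pairing on $\Ext^2_X(F,F) = V \oplus V^*$ and with the variation of $G$ as the fiber moves, i.e.~understanding the universal sheaf $\mathbb{F}$ on $X \times M_1(X, r[f]) = X \times X$ well enough to compute $\dR\mathcal{H}om_{\pi_M}(\mathbb{F}, \mathbb{F})$. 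Since this is exactly \cite[Lemma 2.1]{CMT1}, I would lean on that reference: state that the isomorphism and the identification of the obstruction theory are established there, reproduce the computation of $\Ext$ groups to confirm smoothness, and conclude that the virtual class is $\pm \mathrm{PD}(c_3(X))$, with the sign ambiguity being precisely the choice of orientation of $(\mathcal{L}, Q)$ as discussed in Section~\ref{subsec:DT4}.
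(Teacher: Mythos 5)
First, note that the paper does not actually prove this lemma; it is quoted verbatim from \cite[Lemma~2.1]{CMT1}, so your instinct to cite that reference is consistent with what the authors do. Your sketch of the isomorphism $M_1(X,r[f])\cong X$ is essentially the standard argument (sheaves in class $r[f]$ are supported on fibers, and rank-$r$, $\chi=1$ stable sheaves on an integral arithmetic-genus-one curve are parametrized by the curve itself via the determinant/compactified Jacobian), apart from the garbled phrase ``the moduli \ldots is a single point, identified with $\Pic^1$'' --- what is a single point is the fiber of the determinant map, not the moduli space. Your dimension counts $\ext^1=4$, $\ext^2=6$ are also correct.

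The genuine gap is in the identification of the obstruction bundle. Your fiberwise computation via the normal bundle $N_{X_p/X}\cong\oO_{X_p}^{\oplus 3}$ produces, over the locus of smooth fibers, an obstruction space
$\Ext^1_{X_p}(G,G)\otimes N\oplus\Hom(G,G)\otimes\wedge^2N$, and since $\Ext^1_{X_p}(G,G)\cong H^1(\oO_{X_p})=\omega_Y|_p$ and $N\cong T_pY\otimes\oO$, every summand is pulled back from $Y=\mathbb{P}^3$. Any degree-three characteristic class of a pullback from $\mathbb{P}^3$ pairs to zero with $B=\pi^{\ast}h$ (because $B\cdot\pi^{\ast}(h^3)=\pi^{\ast}(h^4)=0$), whereas $B\cdot c_3(X)=-960\neq 0$ (this number is used in Proposition~\ref{prop:elliptic}). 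Concretely, the naive globalization would give $\pm\pi^{\ast}c_3(T_Y\otimes\omega_Y)=\mp 20B^3\neq\pm\mathrm{PD}(c_3(X))$. So the half-Euler class cannot be computed over the smooth fibers alone: the contribution of the singular fibers over the discriminant --- equivalently, the global structure of the obstruction bundle --- is exactly where $c_3(X)$ differs from a pullback. The correct mechanism (and the reason the answer is a Chern class of $X$ rather than of $\pi^{\ast}T_Y$) is the global identification $\Ext^i_X(F,F)\cong\wedge^iT_xX$ coming from the fact that the universal sheaf is a relative Fourier--Mukai transform of $\oO_{\Delta_X}$ (this is precisely the structure developed in Lemmas~\ref{lem:spherical}--\ref{lem:Ktheory} of the present paper); the obstruction bundle is then $\wedge^2T_X$ with the wedge pairing into $\wedge^4T_X\cong\oO_X$, and for a rank-four bundle $E$ with $c_1(E)=0$ the half-Euler class of $(\wedge^2E,Q)$ equals $\pm c_3(E)$ by the Chern-root identity $(x_1+x_2)(x_1+x_3)(x_1+x_4)=e_3$ when $e_1=0$. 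Your alternative guess that the maximal isotropic is ``(a twist of) $T_X$'' cannot be right on rank grounds either: the isotropic half has rank $3$ while $T_X$ has rank $4$.
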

By the isomorphism in Lemma~\ref{lem:fib:isom}, 
we have the equivalence
\begin{align*}
\Coh(X \times M_1(X, r[f])) \stackrel{\sim}{\to}
\Coh(X \times X). 
\end{align*}
Under the above equivalence, the 
normalized universal sheaf 
$\mathbb{F}_{\rm{norm}}$ 
is a coherent sheaf on $X \times X$. 
In order to compute $\tau_1(\alpha)_{r[f]}$, 
we need to give an explicit description of 
the $K$-theory class of $\mathbb{F}_{\rm{norm}}$. 
Let $\Phi$ be the functor defined by 
\begin{align*}
\Phi(-)=\mathrm{Cone}(\pi^{\ast} \dR \pi_{\ast}(-) \to (-)): \ 
D^b(\Coh(X)) \to D^b(\Coh(X)). 
\end{align*}
\begin{lem}\label{lem:spherical}
The functor $\Phi$ is an equivalence such that, 
for any point $y \in Y$, the 
following diagram commutes
\begin{align}\label{diagram:twist}
\xymatrix{
D^b(\Coh(f_y)) \ar[r]^-{i_{\ast}} \ar[d]_-{\Phi_{f_y}} & 
D^b(\Coh(X)) \ar[r]^-{\dL i^{\ast}} \ar[d]_-{\Phi} & D^b(\Coh(f_y)) 
\ar[d]_-{\Phi_{f_y}} \\
D^b(\Coh(f_y)) \ar[r]^-{i_{\ast}} & D^b(\Coh(X)) \ar[r]^-{\dL i^{\ast}} &
D^b(\Coh(f_y)).
}
\end{align}
Here 
$f_y=\pi^{-1}(y)$, 
$i \colon f_y \hookrightarrow X$ is the inclusion 
 and 
$\Phi_{f_y}$ is the spherical twist 
with respect to $\oO_{f_y}$:
\begin{align*}
\Phi_{f_y}(-)=\mathrm{Cone}(\dR \Gamma(-) \otimes \oO_{f_y} \to (-)). 
\end{align*}
\end{lem}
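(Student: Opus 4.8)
The plan is to first put $\Phi$ in Fourier--Mukai form. Let $j\colon X\times_Y X\hookrightarrow X\times X$ be the fibre product and $\delta\colon X\hookrightarrow X\times X$ the diagonal, which factors through $j$ as the relative diagonal. Since $\pi$ is flat, $j_{\ast}\oO_{X\times_Y X}$ and $\delta_{\ast}\oO_X$ are flat over each $X$-factor, and by the projection formula and flat base change one gets $\FM_{j_{\ast}\oO_{X\times_Y X}}\cong\pi^{\ast}\dR\pi_{\ast}(-)$ and $\FM_{\delta_{\ast}\oO_X}\cong\id$, while the surjection $j_{\ast}\oO_{X\times_Y X}\to\delta_{\ast}\oO_X$ (restriction to the relative diagonal) induces the counit $\pi^{\ast}\dR\pi_{\ast}\to\id$ (this last identification is the only mildly delicate point of Step 1 and is provable by the universal property of the counit or a local computation). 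Hence $\Phi=\FM_{\mathcal{P}}$ with
\begin{align*}
\mathcal{P}:=\Cone\left(j_{\ast}\oO_{X\times_Y X}\longrightarrow\delta_{\ast}\oO_X\right)\in D^b(\Coh(X\times X)),
\end{align*}
a relative kernel over $Y$ supported on $X\times_Y X$; equivalently $\Phi$ is the twist functor $T_{\pi^{\ast}}$ of $\pi^{\ast}\colon D^b(\Coh(Y))\to D^b(\Coh(X))$.

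\textbf{Step 2: $\Phi$ is an equivalence.}
Every fibre $f_y$ is an integral Gorenstein curve of arithmetic genus one (a smooth, nodal, or cuspidal Weierstrass cubic), so $\omega_{f_y}\cong\oO_{f_y}$ and $\RHom_{f_y}(\oO_{f_y},\oO_{f_y})\cong\mathbb{C}\oplus\mathbb{C}[-1]$; hence $T_{\oO_{f_y}}$ is an autoequivalence of $D^b(\Coh(f_y))$, with kernel $\Cone(\oO_{f_y\times f_y}\to\oO_{\Delta_{f_y}})$. The restriction of $\mathcal{P}$ to $f_y\times f_y$ is exactly this kernel: the intersections $(X\times_Y X)\cap(f_y\times f_y)$ and $\Delta\cap(f_y\times f_y)$ have the expected dimension and the subschemes involved are Cohen--Macaulay, so the relevant restrictions are Tor-independent. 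Thus $\Phi$ is a relative Fourier--Mukai transform over $Y$ that is fibrewise an equivalence, and such a transform is an equivalence by the standard criterion of relative Fourier--Mukai theory for elliptic (more generally flat, Gorenstein-fibred) fibrations; cf.\ Bridgeland--Maciocia. Alternatively, I would verify directly that $\pi^{\ast}$ is a spherical functor: its right adjoint is $\dR\pi_{\ast}$, and since $\omega_X\cong\oO_X$ we have $\omega_{\pi}\cong\pi^{\ast}\omega_Y^{-1}$, so the left adjoint is $\dR\pi_{\ast}(-)\otimes\omega_Y^{-1}[1]$; using the splitting $\dR\pi_{\ast}\oO_X\cong\oO_Y\oplus\omega_Y[-1]$ (standard for a Weierstrass model, the $\oO_Y$-summand split off by the section $\iota$) the cotwist is $(-)\otimes\omega_Y[-2]$, an autoequivalence, from which sphericity, and hence the autoequivalence property of $\Phi=T_{\pi^{\ast}}$, follows.

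\textbf{Step 3: commutativity of the diagram.}
Both squares are base change for $\mathcal{P}$ along the fibre $f_y$. In kernel language, $\dL(\id_X\times i)^{\ast}\mathcal{P}$ (resp.\ $\dL(i\times\id_X)^{\ast}\mathcal{P}$) equals the pushforward to $X\times f_y$ (resp.\ $f_y\times X$) of $\Cone(\oO_{f_y\times f_y}\to\oO_{\Delta_{f_y}})$, by the same Tor-independence as in Step 2, which is the kernel of $\Phi_{f_y}$; this gives $\dL i^{\ast}\circ\Phi\cong\Phi_{f_y}\circ\dL i^{\ast}$ and $\Phi\circ i_{\ast}\cong i_{\ast}\circ\Phi_{f_y}$. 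Concretely: for the right square and $F\in D^b(\Coh(X))$, since $\pi i$ factors through the point $y$, flat base change yields a natural isomorphism $\dL i^{\ast}\pi^{\ast}\dR\pi_{\ast}F\cong\dR\Gamma(f_y,\dL i^{\ast}F)\otimes\oO_{f_y}$ under which the counit restricts to the evaluation map, so applying $\dL i^{\ast}$ to the triangle defining $\Phi(F)$ gives $\Phi_{f_y}(\dL i^{\ast}F)$; for the left square and $G\in D^b(\Coh(f_y))$, $\dR\pi_{\ast}i_{\ast}G$ is the skyscraper at $y$ with value $\dR\Gamma(f_y,G)$, base change gives $\pi^{\ast}\dR\pi_{\ast}i_{\ast}G\cong i_{\ast}(\dR\Gamma(f_y,G)\otimes\oO_{f_y})$ compatibly with the counit, and exactness of $i_{\ast}$ identifies $\Phi(i_{\ast}G)$ with $i_{\ast}\Phi_{f_y}(G)$.

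\textbf{Main obstacle.}
The genuine difficulty is that $\pi$ is not smooth: one must ensure that the splitting of $\dR\pi_{\ast}\oO_X$, the Tor-independence of the kernel restrictions, and the fibrewise-equivalence input all survive at the nodal and cuspidal fibres. This is precisely where the integral Gorenstein structure of Weierstrass fibres ($\omega_{f_y}\cong\oO_{f_y}$, fibrewise relative duality, Cohen--Macaulayness of $X\times_Y X$) is essential. A secondary, purely bookkeeping matter is checking the compatibility of the various base-change isomorphisms with the counit, so that the induced maps on cones are the intended ones.
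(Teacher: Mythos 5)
Your proposal is correct in outline, and your second route for Step 2 is essentially the paper's own argument made explicit. The paper's proof is two sentences: the decomposition $\dR\pi_{\ast}\oO_X=\oO_Y\oplus\omega_Y[-1]$ shows $\oO_X$ is EZ-spherical in the sense of Horja, the equivalence then follows from Horja's Theorem~2.11, and the commutativity of \eqref{diagram:twist} ``follows from the definition of $\Phi$.'' Your cotwist computation $C=(-)\otimes\omega_Y[-2]$ uses exactly the same input as the paper's one-line verification, and your Step~3 (base change of the kernel along $f_y$, identification of the restricted counit with the evaluation map, exactness of $i_{\ast}$ on cones) supplies precisely the detail the paper declines to write down. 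Your alternative route (a), via fibrewise equivalence plus a relative Fourier--Mukai criterion, is genuinely different from the paper and is legitimate, but it leans on two statements that need the right references: the criterion ``fibrewise equivalence implies equivalence'' in the form valid for flat families with \emph{singular} Gorenstein fibres (Hern\'andez Ruip\'erez--L\'opez Mart\'{\i}n--Sancho de Salas rather than Bridgeland--Maciocia, who work with smooth fibres), and the fact that the twist by the perfect spherical object $\oO_{f_y}$ is an equivalence on the nodal and cuspidal fibres, where Seidel--Thomas does not literally apply. You correctly identify this as the main obstacle.

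One inference in route (b) is not valid as stated: knowing only that the cotwist is an autoequivalence does not imply that $\pi^{\ast}$ is spherical. By the Anno--Logvinenko criterion one must verify a second of the four standard conditions, for instance that the canonical map $R\to CL[1]$ is an isomorphism. In the present situation it does hold: with $L=\dR\pi_{\ast}(-)\otimes\omega_Y^{-1}[1]$ and $C=(-)\otimes\omega_Y[-2]$ one finds $CL[1]\cong\dR\pi_{\ast}=R$, and this identity is exactly what Horja's EZ-spherical condition packages. So the gap is easily filled --- either by checking this second condition or by citing Horja's Definition~2.1 and Theorem~2.11 as the paper does --- but as written the sentence ``the cotwist is an autoequivalence, from which sphericity follows'' overclaims.
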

\begin{proof}
Since we have the decomposition
\begin{align}\label{decompose}
\dR \pi_{\ast}\oO_X=\oO_Y \oplus \omega_Y[-1],
\end{align}
the object $\oO_X$ is EZ-spherical in the sense 
of~\cite[Definition~2.1]{Horja}.
Therefore the equivalence of $\Phi$ follows from~\cite[Theorem~2.11]{Horja}. 
The commutative diagram (\ref{diagram:twist}) follows from the definition of $\Phi$. 
\end{proof}
We denote by $\Phi^{\times r}$ the $r$-times 
composition of $\Phi$. 
We define the following object
\begin{align}\label{object:I}
(\Phi^{\times r} \boxtimes \id_X)(\oO_{\Delta_X})[-1] 
\in D^b(\Coh(X \times X)),
\end{align}
and regard it as a family of objects over the second factor of $X \times X$. 
\begin{lem}\label{lem:family}
The object (\ref{object:I}) is a flat family of 
stable sheaves $F$ on $X$ with $[F]=r[f]$ and $\chi(F)=-1$. 
\end{lem}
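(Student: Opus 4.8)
The plan is to analyze the object \eqref{object:I} by base-changing to individual fibers of the second projection $p_2 \colon X \times X \to X$ and using the commutative diagram \eqref{diagram:twist} to reduce everything to a computation on a single elliptic (or singular plane cubic) curve. First I would note that over a point $x \in X$ lying in the fiber $f_y = \pi^{-1}(y)$, the derived restriction of $\oO_{\Delta_X}$ to $X \times \{x\}$ is $\oO_x$, so the fiber of \eqref{object:I} at $x$ is $\Phi^{\times r}(\oO_x)[-1]$. Since $\oO_x = i_{\ast}\oO_x$ for the inclusion $i \colon f_y \hookrightarrow X$, the right square of \eqref{diagram:twist} combined with $\dL i^{\ast}i_{\ast}\oO_x = \oO_x \oplus \oO_x[1]$ (using the normal bundle of $f_y$ being trivial of rank $3$, so actually $\bigoplus_{j=0}^{3}\oO_x[j]\binom{3}{j}$ — I would instead use the left square) lets me push the computation down: $\Phi^{\times r}(i_{\ast}G) = i_{\ast}(\Phi_{f_y}^{\times r}(G))$ for $G \in D^b(\Coh(f_y))$.

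Next I would identify $\Phi_{f_y}^{\times r}(\oO_x)[-1]$ on the fiber curve $f_y$. On a smooth elliptic curve, the spherical twist $\Phi_{f_y}$ along $\oO_{f_y}$ acts on the derived category as an autoequivalence whose action on the numerical Grothendieck group $\mathbb{Z}^2$ (rank, degree) is given by a unipotent matrix, specifically $(r', d') \mapsto (r', d' + r')$ or its inverse up to sign conventions — this is the standard description of twists on elliptic curves via $\mathrm{SL}_2(\mathbb{Z})$ actions. Starting from $\oO_x$, which has class $(0,1)$, after $r$ applications one lands on the class $(\pm r, \ast)$; the shift $[-1]$ and the sign of the cone arrange that $\Phi_{f_y}^{\times r}(\oO_x)[-1]$ is an honest sheaf, namely a stable vector bundle of rank $r$ and appropriate degree on $f_y$, hence (pushed to $X$) a stable sheaf $F$ with $[F] = r[f]$. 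A short Riemann–Roch computation on $f_y$, or tracking $\chi$ through the cone triangles defining $\Phi$, gives $\chi(F) = -1$.

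For the singular fibers (nodal or cuspidal plane cubics) I would invoke the same EZ-spherical formalism: $\Phi_{f_y}$ is still an autoequivalence of $D^b(\Coh(f_y))$ by \cite[Theorem~2.11]{Horja}, and stability of the resulting sheaves on integral singular curves of arithmetic genus one can be checked directly (torsion-free rank-one-type sheaves on such curves, or their analogues, remain stable), so no point of $X$ produces a non-sheaf or unstable object. Finally, flatness over the second factor follows because \eqref{object:I} has fiberwise-constant Hilbert polynomial (the numerical invariants $[F] = r[f]$, $\chi(F) = -1$ are independent of $x$) and is cohomologically concentrated in a single degree fiberwise — a sheaf, flat base change, and the local criterion for flatness then give that \eqref{object:I} is a sheaf on $X \times X$ flat over the second $X$.

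The main obstacle I expect is the bookkeeping in the singular-fiber case: verifying that $\Phi_{f_y}^{\times r}(\oO_x)[-1]$ remains a \emph{stable} sheaf (not merely a sheaf, and not a shift of one) when $f_y$ is a nodal or cuspidal cubic, since the clean $\mathrm{SL}_2(\mathbb{Z})$ picture and the notion of slope-stability are less transparent there. I would handle this by a semicontinuity / deformation argument — the locus in $X$ over which the fiber of \eqref{object:I} fails to be a stable sheaf is closed, and it is empty over the generic (smooth) fibers by the elliptic-curve analysis, so one only needs to rule out an entire singular fiber $f_y$ being bad, which follows from the fact that $\Phi$ is a global equivalence on $X$ preserving the heart up to the explicit shift dictated by \eqref{decompose}.
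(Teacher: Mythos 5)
Your first step --- restricting to $X\times\{x\}$, observing that the derived restriction of $\oO_{\Delta_X}$ is $\oO_x$, and using the left square of \eqref{diagram:twist} to reduce to the object $\Phi_{f_y}^{\times r}(\oO_x)[-1]$ on the single fiber $f_y$ --- is exactly how the paper's proof begins, and your closing remark that fiberwise concentration in a single degree yields flatness of the total object is the right (standard) way to finish. The gap is in the middle, and it is the one you yourself flag: the singular fibers. Your analysis of smooth fibers via the $\mathrm{SL}_2(\mathbb{Z})$-action on the numerical Grothendieck group of an elliptic curve is fine as far as it goes, but it gives you nothing on nodal or cuspidal cubics, and your two proposed patches do not work. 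First, semicontinuity only tells you that the ``bad locus'' is a closed subset of $X$ contained in $\pi^{-1}(\Delta)$, where $\Delta\subset\mathbb{P}^3$ is the discriminant; that preimage is a divisor, so the bad locus could perfectly well be nonempty (it need not even be a union of whole fibers), and openness of stability cannot rule this out. Second, the claim that ``$\Phi$ is a global equivalence on $X$ preserving the heart up to the explicit shift dictated by \eqref{decompose}'' is false: $\Phi$ is an EZ-spherical twist and does not carry $\Coh(X)$ into a shift of itself --- indeed already $\Phi(\oO_x)=i_{\ast}\oO_{f_y}(-x)[1]$ while $\Phi(\oO_{f_y})$ lands in a different shift, so no single shift works. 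Nor can you fall back on simplicity: $\Phi_{f_y}^{\times r}(\oO_x)$ is simple because $\Phi_{f_y}$ is an equivalence, but for rank $r>1$ sheaves on an integral singular curve of arithmetic genus one, simple does not imply stable.

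The paper closes this gap with a short induction on $r$ that treats all integral fibers (smooth, nodal, cuspidal) uniformly and never invokes the elliptic-curve autoequivalence picture. The base case is the computation $\Phi_{f_y}(\oO_x)=\oO_{f_y}(-x)[1]$. For the inductive step, one notes that $\dR\Gamma\bigl(\Phi_{f_y}^{\times r}(\oO_x)[-1]\bigr)=\mathbb{C}[-1]$ (this is forced by $\dR\Gamma\circ\Phi_{f_y}=\dR\Gamma[-1]$, or by the induction hypothesis), so the defining triangle of the next twist is a \emph{non-split} short exact sequence of sheaves
\begin{align*}
0 \to \Phi_{f_y}^{\times r}(\oO_x)[-1] \to \Phi_{f_y}^{\times (r+1)}(\oO_x)[-1] \to \oO_{f_y} \to 0 ,
\end{align*}
and a non-split extension of $\oO_{f_y}$ (slope $0$) by a stable sheaf of rank $r$ and Euler characteristic $-1$ (slope $-1/r$) is again stable. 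This is the step your proposal is missing; if you replace your smooth-fiber/semicontinuity argument by this extension argument, the rest of what you wrote assembles into a complete proof.
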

\begin{proof}
Let us take $x \in X$ and set $y=f(x)$. 
By the diagram (\ref{diagram:twist}),
we have 
\begin{align*}
(\Phi^{\times r} \boxtimes \id_X)(\oO_{\Delta_X})|_{X \times \{x\}}
=i_{\ast}\Phi_{f_y}^{\times r}(\oO_x). 
\end{align*}
It is enough to show that $\Phi_{f_y}^{\times r}(\oO_x)[-1]$
is a stable sheaf on $f_y$ with rank $r$ and Euler characteristic
 $-1$. We prove the claim by an induction on $r$. 
For $r=1$, we have 
$$\Phi_{f_y}(\oO_x)=\oO_{f_y}(-x)[1], $$ 
%\begin{YC}
%Should the above be $\oO_{f_y}(-x)[1]$ ?
%\end{YC}
so the claim is obvious. 
Suppose that the claim holds for $r$. 
By the induction hypothesis,  
$$\dR \Gamma(\Phi_{f_y}^{\times r}(\oO_x)[-1])=\mathbb{C}[-1]. $$ 
So there is a non-split 
extension of sheaves on $f_y$,
\begin{align*}
0 \to 
\Phi_{f_y}^{\times r}(\oO_x)[-1]
\to \Phi_{f_y}^{\times r+1}(\oO_x)[-1] \to \oO_{f_y} \to 0. 
\end{align*}
Since $\Phi_{f_y}^{\times r}(\oO_x)[-1]$ is stable 
and the above sequence is non-split, it is straightforward 
to check that $\Phi_{f_y}^{\times r+1}(\oO_x)[-1]$ is also stable. 
\end{proof}
We take the shifted derived dual of the object (\ref{object:I}):
\begin{align*}
\mathbb{F}^{(r)}:=\dR \mathcal{H}om_{X\times X}((\Phi^{\times r} \boxtimes \id_X)(\oO_{\Delta_X}), \oO_{X \times X})[4] \in D^b(\Coh(X \times X)). 
\end{align*}
Again we regard it as a family of objects over the second factor of 
$X \times X$.
\begin{lem}\label{unnorm univ}
The object $\mathbb{F}^{(r)}$ is a (non-normalized)
universal family of one dimensional stable sheaves $E$ on $X$ with 
$[E]=r[f]$ and $\chi(E)=1$. 
\end{lem}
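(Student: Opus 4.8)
The plan is to obtain $\mathbb{F}^{(r)}$ from the family of Lemma~\ref{lem:family} by derived dualization, and then to check fibrewise that one lands in a single cohomological degree, that the resulting sheaf has the asserted topological invariants, and that it is stable and universal. Write $G:=(\Phi^{\times r}\boxtimes\id_X)(\oO_{\Delta_X})$, so that $\mathbb{F}^{(r)}=\dR\mathcal{H}om_{X\times X}(G,\oO_{X\times X})[4]$. By Lemma~\ref{lem:family}, $G[-1]$ is a flat family over the second factor $X$ of stable one dimensional sheaves $F_x$ with $[F_x]=r[f]$ and $\chi(F_x)=-1$, so $G|_{X\times\{x\}}=F_x[1]$ for every closed point $x\in X$. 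Since $X\times X$ is smooth, $G$ is perfect, hence derived dualization commutes with derived restriction to fibres and
\[
\mathbb{F}^{(r)}|_{X\times\{x\}}\;=\;\dR\mathcal{H}om_X\!\big(F_x[1],\oO_X\big)[4]\;=\;\dR\mathcal{H}om_X(F_x,\oO_X)[3].
\]

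The key point is that the right-hand side is a genuine sheaf. As $F_x$ is stable it is pure of dimension one, so $\mathrm{depth}\,F_{x,p}\geqslant 1$ at every $p\in\Supp F_x$; Auslander--Buchsbaum over the regular local rings of the smooth fourfold $X$ bounds the projective dimension of $F_x$ by $3$, giving $\mathcal{E}xt^i_X(F_x,\oO_X)=0$ for $i>3$, while the codimension estimate gives vanishing for $i<3=\codim\Supp F_x$. Thus $\mathbb{F}^{(r)}|_{X\times\{x\}}=\mathcal{E}xt^3_X(F_x,\oO_X)=:F_x^{\vee}$ is a coherent sheaf, again pure of dimension one, and a Chern character computation using $K_X\cong\oO_X$ (equivalently Grothendieck--Riemann--Roch) yields $[F_x^{\vee}]=[F_x]=r[f]$ and $\chi(F_x^{\vee})=-\chi(F_x)=1$. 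Stability of $F_x^{\vee}$ follows because $\dR\mathcal{H}om_X(-,\oO_X)$ is an anti-autoequivalence of $D^b(\Coh(X))$ which on pure one dimensional sheaves is the classical duality $F\mapsto\mathcal{E}xt^3_X(F,\oO_X)$: it is involutive, exchanges sub- and quotient sheaves, and reverses the slope $\chi(-)/\deg(-)$.

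Since the derived restriction of $\mathbb{F}^{(r)}$ to every closed fibre of the second projection is a sheaf in degree $0$, $\mathbb{F}^{(r)}$ is itself quasi-isomorphic to a coherent sheaf on $X\times X$ flat over the second factor; it is therefore a flat family of stable one dimensional sheaves with $[E]=r[f]$ and $\chi(E)=1$, and induces a classifying morphism $c\colon X\to M_1(X,r[f])$. To finish I would argue that $c$ is an isomorphism: it is injective because $\Phi_{f_y}^{\times r}$ and $\dR\mathcal{H}om_X(-,\oO_X)$ are equivalences (distinct points in a fibre of $\pi$ give non-isomorphic $F_x^{\vee}$, and points in different fibres give different supports), and surjective because any such stable sheaf has connected support inside a single fibre $f_y$ of $\pi$ and $M_1(X,r[f])$ is glued from the compactified Jacobians of the $f_y$, which are swept out by the $F_x^{\vee}$ (for $r=1$, $c|_{f_y}$ is the Abel--Jacobi map $x\mapsto\oO_{f_y}(x)$; in general one composes with the determinant isomorphism used in Lemma~\ref{lem:fib:isom}). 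A bijective morphism between smooth projective varieties over $\mathbb{C}$ is an isomorphism, so $\mathbb{F}^{(r)}$ is a universal family.

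I expect the main obstacle to be the second paragraph: ensuring, via purity and Auslander--Buchsbaum, that the derived dual collapses onto the single sheaf $\mathcal{E}xt^3_X(F_x,\oO_X)$ carrying exactly the class $r[f]$ and Euler characteristic $1$, together with the (routine but essential) base-change compatibility of derived dualization. Once this degeneration is in place, stability and universality follow formally.
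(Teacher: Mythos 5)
Your argument is correct and follows essentially the same route as the paper: Lemma~\ref{lem:family} supplies the flat family, the classifying morphism $X \to M_1(X, r[f])$ is injective on closed points because $\Phi$ (and derived dualization) are equivalences, and Lemma~\ref{lem:fib:isom} then forces it to be an isomorphism. The one loose step is your surjectivity argument --- saying the moduli space ``is glued from compactified Jacobians swept out by the $F_x^{\vee}$'' is essentially asserting what is to be proved; the paper instead observes that the classifying map is an isomorphism on tangent spaces, hence an open immersion, and concludes from properness of $X$ and connectedness of $M_1(X,r[f])\cong X$ (alternatively, your injectivity together with properness and a dimension count already gives surjectivity, after which your bijective-plus-normal-target argument applies). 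Your second paragraph --- purity, Auslander--Buchsbaum, the collapse of the shifted dual onto $\mathcal{E}xt^3_X(F_x,\oO_X)$, and the sign bookkeeping giving $[F_x^{\vee}]=r[f]$ and $\chi(F_x^{\vee})=1$ --- is a correct expansion of details the paper leaves implicit in the phrase ``by Lemma~\ref{lem:family}, $\mathbb{F}^{(r)}$ is a flat family.''
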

\begin{proof}
By Lemma~\ref{lem:family}, the object $\mathbb{F}^{(r)}$ is a flat 
family of one dimensional stable sheaves $E$ on $X$ with 
$[E]=r[f]$ and $\chi(E)=1$. Therefore it induces the morphism
$X \to M_{r[f], 1}$. 
Since $\Phi$ is an equivalence, 
the above map is injective on closed points and 
isomorphisms on tangent spaces. 
Since $M_{r[f], 1}\cong X$ (Lemma \ref{lem:fib:isom}), the above 
morphism must be an isomorphism. 
Therefore $\mathbb{F}^{(r)}$ is a universal object. 
\end{proof}
We next determine the $K$-theory class of $\mathbb{F}^{(r)}$. 
We use the following diagram, where the middle square is 
a derived Cartesian
\begin{align*}
\xymatrix{
X \ar@<-0.5ex>@{^{(}->}[r]^-{\iota} 
\ar[d]^-{\pi}\ar@/^20pt/[rr]^-{\Delta_X} &
X \times_Y X \ar[ld]^-{\eta} \ar@<-0.5ex>@{^{(}->}[r]^-{i} \ar[d]_-{p} & X \times X \ar[d]_-{\pi \times \id_X} \ar[rd]^-{\pi\times \pi} & \\
Y\ar@/_20pt/[rrr]_-{\Delta_Y} & X \ar[l]^-{\pi} \ar@<-0.3ex>@{^{(}->}[r]_-{(\pi, \id_X)} & Y \times X 
\ar[r]_-{\id_Y \times \pi\quad} & Y\times Y. 
}
\end{align*}
Here $\iota$ is the diagonal embedding and 
$p$ is the second projection. 
\begin{lem}\label{lem:Ktheory}
We have the following identity in the K-theory of 
$X \times X$
\begin{align*}
[\mathbb{F}^{(r)}]=
-\left(\sum_{j=0}^{r-1}i_{\ast}\eta^{\ast}\omega_Y^{\otimes j}-\oO_{\Delta_X}\right)^{\vee}. 
\end{align*}
\end{lem}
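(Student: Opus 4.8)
The plan is to reduce the statement to a computation in $K(X\times X)$ of the class of $(\Phi^{\times r}\boxtimes\id_X)(\oO_{\Delta_X})$, and then to pin down that class by induction on $r$, reading off the effect of a single application of $\Phi\boxtimes\id_X$ from the triangle defining $\Phi$ together with flat base change along $\pi$.

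First I would observe that since $\mathbb{F}^{(r)}=\dR\mathcal{H}om_{X\times X}\bigl((\Phi^{\times r}\boxtimes\id_X)(\oO_{\Delta_X}),\oO_{X\times X}\bigr)[4]$ and $[\mathcal{E}[4]]=[\mathcal{E}]$ in the Grothendieck group, one has $[\mathbb{F}^{(r)}]=[(\Phi^{\times r}\boxtimes\id_X)(\oO_{\Delta_X})]^{\vee}$, where $(-)^{\vee}$ is the (additive) duality involution on $K(X\times X)$. Hence the lemma is equivalent to the identity
\[
[(\Phi^{\times r}\boxtimes\id_X)(\oO_{\Delta_X})]=[\oO_{\Delta_X}]-\sum_{j=0}^{r-1}[i_{\ast}\eta^{\ast}\omega_Y^{\otimes j}]\in K(X\times X),
\]
which one then dualizes term by term; it is worth recording separately (via Grothendieck duality and $K_X\cong\oO_X$) that $\oO_{\Delta_X}^{\vee}[4]\cong\oO_{\Delta_X}$, the incarnation of the symmetric role played by $\oO_{\Delta_X}$ on the two sides of the formula.

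Next I would compute the effect of one copy of $\Phi\boxtimes\id_X$ on $K$-theory. Since $\Phi=\Cone(\pi^{\ast}\dR\pi_{\ast}(-)\to(-))$, the operation $\Phi\boxtimes\id_X$ is $\mathcal{F}\mapsto\Cone\bigl((\pi\times\id_X)^{\ast}\dR(\pi\times\id_X)_{\ast}\mathcal{F}\to\mathcal{F}\bigr)$, so $[(\Phi\boxtimes\id_X)(\mathcal{F})]=[\mathcal{F}]-[(\pi\times\id_X)^{\ast}\dR(\pi\times\id_X)_{\ast}\mathcal{F}]$. For $\mathcal{F}=\oO_{\Delta_X}$: the map $\pi\times\id_X$ carries $\Delta_X$ isomorphically onto the graph $\Gamma_{\pi}\subset Y\times X$, so $\dR(\pi\times\id_X)_{\ast}\oO_{\Delta_X}=\oO_{\Gamma_{\pi}}$, and, using flatness of the Weierstrass fibration $\pi$, $(\pi\times\id_X)^{\ast}\oO_{\Gamma_{\pi}}=\oO_{X\times_Y X}=i_{\ast}\eta^{\ast}\omega_Y^{\otimes 0}$; this yields the $r=1$ case. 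For $\mathcal{F}=i_{\ast}\eta^{\ast}\omega_Y^{\otimes j}$: using the middle (derived) Cartesian square of the displayed diagram, so that $(\pi\times\id_X)\circ i=(\pi,\id_X)\circ p$, base change and the projection formula give $\dR(\pi\times\id_X)_{\ast}i_{\ast}\eta^{\ast}\omega_Y^{\otimes j}=(\pi,\id_X)_{\ast}\bigl(\pi^{\ast}\omega_Y^{\otimes j}\otimes\dR p_{\ast}\oO_{X\times_Y X}\bigr)$; flat base change for $p$ together with the decomposition $\dR\pi_{\ast}\oO_X=\oO_Y\oplus\omega_Y[-1]$ of \eqref{decompose} gives $\dR p_{\ast}\oO_{X\times_Y X}=\oO_X\oplus\pi^{\ast}\omega_Y[-1]$; and applying $(\pi\times\id_X)^{\ast}$ and base change once more (in the form $(\pi\times\id_X)^{\ast}(\pi,\id_X)_{\ast}=i_{\ast}p^{\ast}$) produces $(\pi\times\id_X)^{\ast}\dR(\pi\times\id_X)_{\ast}i_{\ast}\eta^{\ast}\omega_Y^{\otimes j}=i_{\ast}\bigl(\eta^{\ast}\omega_Y^{\otimes j}\oplus\eta^{\ast}\omega_Y^{\otimes(j+1)}[-1]\bigr)$. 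In $K$-theory this says $[(\Phi\boxtimes\id_X)(i_{\ast}\eta^{\ast}\omega_Y^{\otimes j})]=[i_{\ast}\eta^{\ast}\omega_Y^{\otimes(j+1)}]$.

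Finally I would run the induction: assuming the displayed identity for $r$, linearity of $\Phi\boxtimes\id_X$ on $K$-theory together with the two computations above (which act by $[\oO_{\Delta_X}]\mapsto[\oO_{\Delta_X}]-[i_{\ast}\eta^{\ast}\omega_Y^{\otimes 0}]$ and $[i_{\ast}\eta^{\ast}\omega_Y^{\otimes j}]\mapsto[i_{\ast}\eta^{\ast}\omega_Y^{\otimes(j+1)}]$) gives it for $r+1$, and dualizing concludes. I expect the main obstacle to be the careful bookkeeping of the base-change and projection formulas over the singular, non-smooth fiber product $X\times_Y X$: one must make sure that $\pi$ — hence $\pi\times\id_X$ and $p$ — is flat, so that the derived and ordinary fiber products coincide and the manipulations above are legitimate, and keep precise track of the shift $[4]$ and the duality involution. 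Beyond that the argument is formal, and I would use the $r=1$ output, whose fiber over $x\in X$ is $i_{\ast}\oO_{f_y}(-x)[1]$ (of class $[f]$ and Euler characteristic $-1$), as a consistency check against Lemma~\ref{lem:family}.
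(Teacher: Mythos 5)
Your proposal is correct and follows essentially the same route as the paper: reduce to the $K$-theoretic identity for $(\Phi^{\times r}\boxtimes\id_X)(\oO_{\Delta_X})$, establish the $r=1$ case by identifying $(\pi\times\id_X)^{\ast}\dR(\pi\times\id_X)_{\ast}\oO_{\Delta_X}$ with $i_{\ast}\oO_{X\times_Y X}$, and run the induction via the computation $[(\Phi\boxtimes\id_X)(i_{\ast}\eta^{\ast}\omega_Y^{\otimes j})]=[i_{\ast}\eta^{\ast}\omega_Y^{\otimes(j+1)}]$ using the decomposition \eqref{decompose} and the base-change identity $(\pi\times\id_X)^{\ast}(\pi,\id_X)_{\ast}=i_{\ast}p^{\ast}$. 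The only cosmetic difference is that you factor the pushforward through the graph $\Gamma_{\pi}$ and the projection formula where the paper writes $(\pi,\id_X)_{\ast}p_{\ast}\iota_{\ast}\oO_X$ directly; the substance is identical.
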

\begin{proof}
By the definition of $\mathbb{F}^{(r)}$, it is enough to show 
the identity in the $K$-theory
\begin{align}\label{id:r}
(\Phi^{\times r} \boxtimes \id_X)(\oO_{\Delta_X})=
-\sum_{j=0}^{r-1}i_{\ast}\eta^{\ast}\omega_Y^{\otimes j}+\oO_{\Delta_X}. 
\end{align}
We prove (\ref{id:r}) by the induction on $r$. 
For $r=1$, since 
\begin{align*}
(\pi \times \id_X)^{\ast}(\pi\times \id_X)_{\ast}\oO_{\Delta_X}
&\cong (\pi\times \id_X)^{\ast}(\pi, \id_X)_{\ast}p_{\ast} \iota_{\ast}\oO_X \\
&\cong i_{\ast}\oO_{X \times_Y X},
\end{align*}
thus we have 
$$(\Phi \boxtimes \id_X)(\oO_{\Delta_X})=i_{\ast}\oO_{X\times_Y X}(-\Delta_X)[1], $$ 
where $\oO_{X\times_Y X}(-\Delta_X) \subset \oO_{X\times_Y X}$ is the ideal 
sheaf of the diagonal. So (\ref{id:r}) holds for $r=1$. 

Suppose that (\ref{id:r}) holds for $r$. 
We have isomorphisms
\begin{align*}
(\pi \times \id_X)^{\ast}(\pi \times \id_X)_{\ast}i_{\ast}\eta^{\ast}\omega_Y^{\otimes j}&\cong 
(\pi \times \id_X)^{\ast}(\pi, \id_X)_{\ast} \dR p_{\ast}\eta^{\ast}\omega_Y^{\otimes j} \\
&\cong 
(\pi \times \id_X)^{\ast}(\pi, \id_X)_{\ast}(\pi^{\ast}\omega_Y^{\otimes j}
\oplus \pi^{\ast}\omega_Y^{\otimes j+1}[-1]) \\
&\cong i_{\ast}p^{\ast}(\pi^{\ast}\omega_Y^{\otimes j}
\oplus \pi^{\ast}\omega_Y^{\otimes j+1}[-1]).
\end{align*}
Here we have used (\ref{decompose}) for the second isomorphism. 
It follows that 
the object $(\Phi \boxtimes \id_X)(i_{\ast}\eta^{\ast}\omega_Y^{\otimes j})$
has $K$-theory class $i_{\ast}\eta^{\ast}\omega_Y^{\otimes j+1}$. 
By the induction hypothesis, we have 
\begin{align*}
(\Phi^{\times r+1} \boxtimes \id_X)(\oO_{\Delta_X})=
-\sum_{j=0}^{r-1}i_{\ast}\eta^{\ast}\omega_Y^{\otimes j+1}
-i_{\ast}\oO_{X\times_Y X} +\oO_{\Delta_X},
\end{align*}
which implies (\ref{id:r}) holds for $r+1$. 
\end{proof}

We define $\mathbb{F}^{(r)}_{\rm{norm}}$ to be
\begin{align*}
\mathbb{F}^{(r)}_{\rm{norm}}
\cneq \mathbb{F}^{(r)} \otimes p_M^{\ast}\pi^{\ast}\omega_Y^{\otimes r}
\in \Coh(X \times X). 
\end{align*}
Here $p_M \colon X \times X \to X$ is the second projection. 
\begin{lem}\label{lem:Fnorm}
The object $\mathbb{F}^{(r)}_{\rm{norm}}$
is the normalized universal sheaf of 
one dimensional stable sheaves $E$ on $X$ with $[E]=r[f]$ and $\chi(E)=1$. 
\end{lem}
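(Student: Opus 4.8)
The plan is to verify directly that $\mathbb{F}^{(r)}_{\rm norm}$ has the two properties characterizing the normalized universal sheaf: it is a universal family of one dimensional stable sheaves, and the determinant of its derived pushforward to $M_1(X,r[f])\cong X$ is trivial. The first property is immediate from Lemma~\ref{unnorm univ}, since $\mathbb{F}^{(r)}$ is already such a universal family and tensoring by the pullback $p_M^{\ast}\pi^{\ast}\omega_Y^{\otimes r}$ of a line bundle from the moduli factor preserves this. Hence the whole point is to compute $\det\big(\dR p_{M\ast}\mathbb{F}^{(r)}\big)$; granting the identity
\[
\det\big(\dR p_{M\ast}\mathbb{F}^{(r)}\big)\cong\pi^{\ast}\omega_Y^{\otimes -r},
\]
the normalization \eqref{norm univ sheaf} of $\mathbb{F}^{(r)}$ equals $\mathbb{F}^{(r)}\otimes p_M^{\ast}\big(\pi^{\ast}\omega_Y^{\otimes -r}\big)^{-1}=\mathbb{F}^{(r)}_{\rm norm}$, which is the assertion (and, together with the projection formula and $\rk\,\dR p_{M\ast}\mathbb{F}^{(r)}=\chi(E)=1$, it also confirms $\det(\dR p_{M\ast}\mathbb{F}^{(r)}_{\rm norm})\cong\oO_X$ directly).

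To prove the displayed identity I would push the $K$-theory class computed in Lemma~\ref{lem:Ktheory} forward along $p_M$; since the determinant line bundle factors through $K$-theory it is enough to compute $[\dR p_{M\ast}\mathbb{F}^{(r)}]\in K(X)$. The two inputs are: relative Grothendieck--Serre duality for the smooth morphism $p_M$, which because $X$ is Calabi--Yau takes the form $p_M^{!}\oO_X\cong\oO_{X\times X}[4]$, so that $\dR p_{M\ast}(\mathcal{G}^{\vee})\cong(\dR p_{M\ast}\mathcal{G})^{\vee}[-4]$ and the duality involution commutes with $\dR p_{M\ast}$ on $K$-theory; and flat base change in the derived Cartesian square of the paper's diagram, which exhibits $p\colon X\times_Y X\to X$ as a base change of $\pi$ and hence gives $\dR p_{\ast}\oO_{X\times_Y X}\cong\pi^{\ast}\dR\pi_{\ast}\oO_X\cong\oO_X\oplus\pi^{\ast}\omega_Y[-1]$ via \eqref{decompose}. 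Combining these with the projection formula and the relation $\eta=\pi\circ p$, one finds
\[
\dR p_{M\ast}\big[i_{\ast}\eta^{\ast}\omega_Y^{\otimes j}\big]=\big[\pi^{\ast}\omega_Y^{\otimes j}\big]-\big[\pi^{\ast}\omega_Y^{\otimes j+1}\big],\qquad\dR p_{M\ast}[\oO_{\Delta_X}]=[\oO_X].
\]
Substituting into Lemma~\ref{lem:Ktheory}, whose right-hand side is $-\sum_{j=0}^{r-1}[i_{\ast}\eta^{\ast}\omega_Y^{\otimes j}]^{\vee}+[\oO_{\Delta_X}]^{\vee}$, and applying the duality involution, the sum over $j$ telescopes and the $[\oO_X]$ contributions cancel, leaving $[\dR p_{M\ast}\mathbb{F}^{(r)}]=[\pi^{\ast}\omega_Y^{\otimes -r}]$; taking determinants gives the claim.

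The step requiring the most care is the bookkeeping of cohomological shifts and the attendant signs in $K$-theory — the $[4]$ in the definition of $\mathbb{F}^{(r)}$, the $[-1]$ in \eqref{decompose}, and the $[-4]$ from Grothendieck duality all have to be tracked consistently — together with the verification that $\dR p_{M\ast}\mathbb{F}^{(r)}$ genuinely has rank $1$, so that the twist by $p_M^{\ast}\pi^{\ast}\omega_Y^{\otimes r}$ shifts its determinant by exactly $\pi^{\ast}\omega_Y^{\otimes r}$; this rank statement is precisely the condition $\chi(E)=1$ furnished by Lemma~\ref{lem:family}. Beyond this, the argument is a routine computation in $K(X\times X)$, so I do not expect a substantive obstacle.
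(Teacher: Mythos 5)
Your proposal is correct and follows essentially the same route as the paper: both reduce to computing $[\dR p_{M\ast}\mathbb{F}^{(r)}]$ in $K$-theory from Lemma~\ref{lem:Ktheory}, using Grothendieck duality for $p_M$ (trivialized by the Calabi--Yau condition) together with base change and the projection formula to get $\dR p_{M\ast}(i_{\ast}\eta^{\ast}\omega_Y^{\otimes j})^{\vee}=\pi^{\ast}\omega_Y^{\otimes -j}-\pi^{\ast}\omega_Y^{\otimes -j-1}$, after which the sum telescopes to $\pi^{\ast}\omega_Y^{\otimes -r}$. Your added remarks on the rank-one normalization and the even cohomological shifts are correct fillings-in of details the paper leaves implicit.
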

\begin{proof}
By a similar computation in Lemma~\ref{lem:Ktheory}
and the Grothendieck duality, 
we have 
\begin{align*}
\dR p_{M\ast}(i_{\ast}\eta^{\ast}\omega_Y^{\otimes j})^{\vee}
\cong \pi^{\ast}\omega_Y^{\otimes -j}\oplus 
\pi^{\ast}\omega_Y^{\otimes -j-1}[1]. 
\end{align*}
Therefore 
by Lemma~\ref{lem:Ktheory} we have an identity in the $K$-theory: 
\begin{align*}
\dR p_{M\ast} \mathbb{F}^{(r)}
&=-\sum_{j=0}^{r-1}(\pi^{\ast}\omega_Y^{\otimes -j}-\pi^{\ast}\omega_Y^{\otimes
-j-1})+\oO_X \\
&\cong \pi^{\ast}\omega_Y^{\otimes -r}. 
\end{align*}
Therefore the lemma holds from the definition of normalized 
universal sheaf. 
\end{proof}

Using above lemmas, 
we can compute $\langle \tau_1(\alpha) \rangle_{r[f]}$
in the following. 
\begin{prop}\label{prop:elliptic}
Let $B,D\in H^2(X,\mathbb{Z})$ be divisors in \eqref{div:BE}.
For $\alpha=\alpha_1[D]+\alpha_2[B]$, 
we have 
%\begin{YC}
%$D=E$ ?
%\end{YC}
\begin{align*}
\langle \tau_1(\alpha) \rangle_{r[f]}
=(20-1920r^2)\alpha_1+960 \alpha_2, \quad\forall\,\, \alpha_1,\alpha_2\in \mathbb{R}.
\end{align*}
Here we have chosen the minus sign for 
the virtual class (\ref{Mvir}). 
\end{prop}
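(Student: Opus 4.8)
The plan is to use the isomorphism $M_1(X,r[f])\cong X$ of Lemma~\ref{lem:fib:isom} to convert the descendent invariant into an intersection number on $X\times X$, compute $\ch_4$ of the normalized universal sheaf from the $K$-theory class in Lemma~\ref{lem:Ktheory} and Lemma~\ref{lem:Fnorm}, and then evaluate. Concretely, after identifying $M_1(X,r[f])$ with $X$, the virtual class is $-\mathrm{PD}(c_3(X))$ (the minus sign being the chosen orientation), the two projections become $p_X,p_M\colon X\times X\to X$, and the universal sheaf becomes $\mathbb{F}^{(r)}_{\mathrm{norm}}=\mathbb{F}^{(r)}\otimes p_M^{\ast}\pi^{\ast}\omega_Y^{\otimes r}$. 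The projection formula then gives
\[
\langle\tau_1(\alpha)\rangle_{r[f]}=-\int_{X\times X}p_X^{\ast}\alpha\cup p_M^{\ast}c_3(X)\cup\ch_4(\mathbb{F}^{(r)}_{\mathrm{norm}}),
\]
so only the degree-$8$ part of $\ch(\mathbb{F}^{(r)}_{\mathrm{norm}})$ is needed, and by Lemma~\ref{lem:Ktheory} it is obtained from $\sum_{j=0}^{r-1}i_{\ast}\eta^{\ast}\omega_Y^{\otimes j}-\oO_{\Delta_X}$ by taking the shifted derived dual and twisting by $p_M^{\ast}\pi^{\ast}\omega_Y^{\otimes r}$.

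Next I would compute this Chern character by Grothendieck--Riemann--Roch. For the regular embedding $i\colon X\times_Y X\hookrightarrow X\times X$ the normal bundle is $\eta^{\ast}T_Y=\eta^{\ast}T_{\mathbb{P}^3}$ (the middle square in the diagram preceding Lemma~\ref{lem:Ktheory} is derived Cartesian, with $\Delta_Y$-normal bundle $T_Y$), so $\ch(i_{\ast}\eta^{\ast}\omega_Y^{\otimes j})=i_{\ast}\eta^{\ast}\big(e^{-4jh}\,\td(T_{\mathbb{P}^3})^{-1}\big)$, where $c_1(\omega_Y)=-c_1(\oO_Y(-K_Y))=-4h$; for $\Delta_X\colon X\hookrightarrow X\times X$ the normal bundle is $T_X$, so $\ch(\oO_{\Delta_X})=\Delta_{X\ast}\td(T_X)^{-1}$, whose odd pieces vanish since $c_1(X)=0$. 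Taking the shifted dual sends $\ch_k$ to $(-1)^k\ch_k$ (the shift $[4]$ is even, hence harmless), and the normalization twist multiplies by $e^{-4r\,p_M^{\ast}\pi^{\ast}h}$. Collecting degree-$8$ terms, the $j$-dependence enters through $\ch_4(i_{\ast}\eta^{\ast}\omega_Y^{\otimes j})=-(4j+2)\,i_{\ast}\eta^{\ast}h$ and through the rank-one term $i_{\ast}\eta^{\ast}1$ crossed with the linear part of the twist; since $\sum_{j=0}^{r-1}(4j+2)=2r^2$, this is exactly where the $r^2$-dependence is produced, and the expected outcome is $\ch_4(\mathbb{F}^{(r)}_{\mathrm{norm}})=[\Delta_X]-2r^2\,i_{\ast}\eta^{\ast}h$ up to the bookkeeping above.

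Then I would finish with intersection theory on $X\times X$. The $[\Delta_X]$-term restricts to $\int_X\alpha\cup c_3(X)$. The $i_{\ast}\eta^{\ast}h$-term pulls back to $X\times_Y X$, where, writing $p_1,p_2$ for its two projections to $X$ and using base change $p_{1\ast}p_2^{\ast}=\pi^{\ast}\pi_{\ast}$ for the square defining $X\times_Y X$, the integral becomes $(\alpha\cdot[f])\cdot\int_X c_3(X)\cup\pi^{\ast}h$; here $\alpha\cdot[f]=\alpha_1$ since $D\cdot[f]=1$ and $B\cdot[f]=0$, and $\int_X c_3(X)\cup\pi^{\ast}h=\int_X c_3(X)\cup B$. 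Plugging in the Chern numbers of the Weierstrass fourfold — which follow from its hypersurface description in $\mathbb{P}(\oO_Y\oplus\oO_Y(-2K_Y)\oplus\oO_Y(-3K_Y))$ together with $D^2=-4BD$, $\int_X B^3D=1$, $B^4=0$, as in \cite{CMT1}, giving $\int_X D\cup c_3(X)=-20$ and $\int_X B\cup c_3(X)=-960$ — yields $\langle\tau_1(\alpha)\rangle_{r[f]}=-\int_X\alpha\cup c_3(X)+2r^2\alpha_1\int_X B\cup c_3(X)=(20-1920r^2)\alpha_1+960\alpha_2$.

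The main obstacle is bookkeeping rather than a conceptual difficulty: $X\times_Y X$ is a singular (only quasi-smooth derived) scheme, so the GRR and base-change manipulations must be run in the derived sense, which in practice means systematically replacing $\dR\eta_{\ast}$ and the $\dR p_{i\ast}$ by the splitting $\dR\pi_{\ast}\oO_X=\oO_Y\oplus\omega_Y[-1]$ of \eqref{decompose}, exactly as in the proofs of Lemma~\ref{lem:Ktheory} and Lemma~\ref{lem:Fnorm}. The other delicate point is keeping the normalization twist and the duality sign straight; a convenient internal check is that the coefficient of $r^2\alpha_1$ must equal $2\int_X B\cup c_3(X)$, i.e. twice the coefficient of $\alpha_2$, which is visibly the case in the stated formula.
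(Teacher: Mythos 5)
Your proposal is correct and takes essentially the same route as the paper's proof: identify $M_1(X,r[f])$ with $X$, compute $\ch_4(\mathbb{F}^{(r)}_{\rm{norm}})=[\Delta_X]-2r^2\,i_{\ast}\eta^{\ast}h$ by Grothendieck--Riemann--Roch from Lemma~\ref{lem:Ktheory} (the paper likewise arrives at $r[X\times_Y X]-2r^2 i_{\ast}\eta^{\ast}h+\Delta_X+\cdots$ after the dual and the normalization twist), and then pair with $-c_3(X)$ using $D\cdot c_3(X)=-20$ and $B\cdot c_3(X)=-960$. Both sources of the $r^2$-dependence you identify (the sum $\sum_{j}(4j+2)=2r^2$ and the cross term with the twist $e^{-4r p_M^{\ast}\pi^{\ast}h}$) combine exactly as in the paper, and your intersection-theoretic evaluation via $p_{2\ast}p_1^{\ast}=\pi^{\ast}\pi_{\ast}$ reproduces the paper's $p_{M\ast}(\ch_4(\mathbb{F}^{(r)}_{\rm{norm}})\cdot p_X^{\ast}\alpha)=\alpha_1(D-2r^2B)+\alpha_2 B$.
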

\begin{proof}
Using Lemma~\ref{lem:Ktheory} and 
Grothendieck Riemann-Roch theorem, we can easily
calculate the Chern character of $\mathbb{F}^{(r)}_{\rm{norm}}$
as follows
(we only describe the terms with $\ch_3$ and $\ch_4$)
\begin{align*}
\ch(\mathbb{F}_{\rm{norm}}^{(r)})&=
-\left(\sum_{j=0}^{r-1}\ch(i_{\ast}\eta^{\ast}\omega_Y^{\otimes j})
-\ch(\oO_{\Delta_X})\right)^{\vee} \cdot 
e^{-4rp_M^{\ast}\pi^{\ast}h} \\
&=-\left(\sum_{j=0}^{r-1}i_{\ast}(e^{-4j\eta^{\ast}h}\cdot 
\eta^{\ast}
 \mathrm{td}_{Y/Y\times Y}^{-1})-\ch(\oO_{\Delta_X})\right)^{\vee}
\cdot 
e^{-4rp_M^{\ast}\pi^{\ast}h} \\
&=-\left\{  \sum_{j=0}^{r-1}i_{\ast}(1-4j\eta^{\ast}h-2\eta^{\ast}h+\cdots)^{\vee}+\Delta_X+ \cdots\right\}\cdot 
e^{-4rp_M^{\ast}\pi^{\ast}h} \\
&=\Big(r[X\times_Y X]+2r^2 i_{\ast}\eta^{\ast}h+\Delta_X+\cdots\Big) \cdot 
e^{-4rp_M^{\ast}\pi^{\ast}h}\\
&=r[X\times_Y X]-2r^2 i_{\ast}\eta^{\ast}h+\Delta_X+\cdots.
\end{align*}
%\begin{align*}
%\ch_4(\mathbb{F}^{(r)}_{\rm{norm}})=-2r^2 \cdot i_{\ast}\eta^{\ast}h+\Delta_X. %\end{align*}
Therefore we have 
\begin{align*}
p_{M\ast}(\ch_4(\mathbb{F}^{(r)}_{\rm{norm}}) \cdot p_X^{\ast}\alpha)
=\alpha_1(D-2r^2B)+\alpha_2 B.
\end{align*}
The proposition follows from $D \cdot c_3(X)=-20$ and $B \cdot c_3(X)=-960$. 
Here we note that the divisor $E$ in \cite[pp. 38-39]{KP} satisfies $E=D+4B$, then 
$D \cdot c_3(X)=-20$ follows
from $E \cdot c_3(X)=-3860$ and $B \cdot c_3(X)=-960$. 
\end{proof}
We next compute the meeting numbers. 
%We have the following. 
\begin{lem}
For any $r_1>0$, $r_2>0$, we have 
\begin{align}\label{id:meet} 
m_{r_1[f], r_2[f]}=46080.
\end{align}
\end{lem}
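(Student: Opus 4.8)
The plan is to compute the meeting invariant $m_{r_1[f], r_2[f]}$ directly from the recursion rules (i)--(iv) defining $m_{\beta_1,\beta_2}$ in \eqref{meeting invs}, using the genus zero GV type invariants of the Weierstrass CY 4-fold $X$ that are already available in the literature. The key observation is that all classes involved, $r_1[f]$ and $r_2[f]$, are proportional (multiples of the fiber class $[f]$), so the recursion becomes a recursion purely in the positive integers $r_1, r_2$. First I would record the genus zero GV invariants $n_{0, r[f]}(S)$ for $S$ ranging over a basis of the free part of $H^4(X,\mathbb{Z})$; by the structure of the elliptic fibration these are supported on the fiber class, and one expects $n_{0, r[f]}(S)$ to be independent of $r$ (each is the contribution of the unique one-dimensional family of fibral rational curves, weighted by $S\cdot(\text{something})$), reducing to the known value $n_{0,[f]}(S)$. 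The relevant numerical inputs — intersection numbers like $B\cdot c_3(X) = -960$, $D\cdot c_3(X)=-20$, and the values of $n_{0,[f]}(S_i)g^{ij}n_{0,[f]}(S_j)$ — are either quoted from \cite{KP} or computed as in Proposition \ref{prop:elliptic}.

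Next I would set up the recursion. Write $q := \sum_{i,j} n_{0,[f]}(S_i) g^{ij} n_{0,[f]}(S_j)$ for the ``self-meeting'' quadratic quantity in the fiber class, and let $c := n_{0,[f]}(c_2(X))$. Using proportionality, the guess is that $m_{r_1[f], r_2[f]}$ depends only on $\gcd$-type data; in fact I expect $m_{r_1[f], r_2[f]}$ to be the \emph{same constant} $46080$ for all $r_1,r_2>0$, so the cleanest route is to prove $m_{r_1[f],r_2[f]} = m$ is independent of $(r_1,r_2)$ and then pin down the constant. For $r_1 \neq r_2$, rule (iii) gives
\begin{align*}
m_{r_1[f], r_2[f]} = q + m_{r_1[f], (r_2 - r_1)[f]} + m_{(r_1 - r_2)[f], r_2[f]},
\end{align*}
(with the convention that terms with a non-positive coefficient vanish by rule (ii)), while rule (iv) gives $m_{r[f],r[f]} = c + q - \sum_{a+b=r} m_{a[f], b[f]}$. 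Running the Euclidean-algorithm structure of this recursion — exactly the combinatorics already exploited in the proof of Lemma \ref{lem:GV0} — one shows by induction on $r_1 + r_2$ that the solution is constant, and evaluating the base case $m_{[f],[f]} = c + q$ (since the sum $\sum_{a+b=1}$ is empty) identifies the constant. So the final step is the arithmetic check that $c + q = 46080$ with the values of $n_{0,[f]}(c_2(X))$ and the $n_{0,[f]}(S_i)$ read off from the Weierstrass model.

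The main obstacle I anticipate is not the recursion itself — that is a bookkeeping exercise parallel to Lemma \ref{lem:GV0} — but rather assembling the correct genus zero GV inputs: one must be careful that $n_{0, r[f]}(\gamma)$ really is independent of $r$ and correctly normalized (the multiple cover contributions in the definition $\sum_\beta n_{0,\beta}(\gamma) \sum_d d^{-2} q^{d\beta}$ must be untangled), and one must match conventions with \cite{KP}, where the divisor is $E = D + 4B$ rather than $D$. Getting these normalizations right is exactly what makes the final number come out to $46080$ rather than some rescaling of it; once the inputs are fixed, the identity \eqref{id:meet} follows by the induction sketched above.
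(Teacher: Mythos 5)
Your overall strategy --- run the recursion (i)--(iv) purely in the positive integers $r_1,r_2$ and induct on $r_1+r_2$ --- is the same as the paper's, but the numerical structure you assume for the inputs is wrong in a way that breaks the induction. You posit that $n_{0,r[f]}(S)$ is independent of $r$, so that both the constant term $c=n_{0,r[f]}(c_2(X))$ and the quadratic term $q$ in your recursion are genuine constants. In fact, taking $\gamma_1=D\cdot B$ and $\gamma_2=B^2$ as the relevant basis of $H^4(X)$, one has $n_{0,r[f]}(\gamma_1)=960r$ and $n_{0,r[f]}(\gamma_2)=0$ (see \cite[Section~6.3]{KP}), so with $c_2(X)=48\gamma_1+182\gamma_2$ one gets $n_{0,r[f]}(c_2(X))=46080r$, growing linearly in $r$. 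This linear growth is not a normalization subtlety to be fixed at the end; it is precisely what makes rule (iv) close up: $m_{r[f],r[f]}=46080r-\sum_{r'+r''=r}m_{r'[f],r''[f]}=46080r-46080(r-1)=46080$. With a constant $c$, rule (iv) would instead give $m_{r[f],r[f]}=c+q-(r-1)m_0$, which is not constant in $r$, so the induction you describe cannot be run as stated.

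The second gap is the quadratic term. You leave $q$ as an unevaluated constant to be absorbed into a final ``arithmetic check $c+q=46080$''. In fact the term $\sum_{i,j}n_{0,r_1[f]}(S_i)g^{ij}n_{0,r_2[f]}(S_j)$ vanishes identically for all pairs: the $(4,4)$ K\"unneth component of the diagonal is $\gamma_1\boxtimes\gamma_2+\gamma_2\boxtimes\gamma_1-4\gamma_2\boxtimes\gamma_2$, and every summand carries a $\gamma_2$ factor, against which $n_{0,r[f]}(\gamma_2)=0$. This vanishing is also what collapses the off-diagonal rule (iii) to the single Euclidean step $m_{r_1[f],r_2[f]}=m_{(r_1-r_2)[f],r_2[f]}$ for $r_1>r_2$ (the other term dies by rule (ii)). So the correct inputs are $c=46080r$ and $q=0$, not ``$c$ and $q$ constant with $c+q=46080$''; with these in place your induction on $r_1+r_2$ goes through exactly as in the paper.
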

\begin{proof}
We set $\gamma_1=D \cdot B$ and $\gamma_2=B^2$. 
By~\cite[Section~6.3]{KP}, we have 
$$n_{0, r[f]}(\gamma_1)=960r, \quad n_{0, r[f]}(\gamma_2)=0. $$ 
As in \cite[Section~6.3]{KP}, to compute $n_{0,\beta}(\gamma)$ for some $\gamma\in H^4(X)$, it is enough to 
take $\gamma_1$, $\gamma_2$ as a basis and consider projections of $\gamma$ to them.
Therefore, we write 
%The only diagonal components which contribute to meeting invariants is 
\begin{align}\label{diagonal}
\Delta=\gamma_1\boxtimes \gamma_2+\gamma_2 \boxtimes \gamma_1-4\gamma_2 \boxtimes \gamma_2, \quad 
c_2(X)=48\gamma_1+182\gamma_2. 
\end{align}
We prove (\ref{id:meet}) by an induction on $r_1+r_2$. 
If $r_1=r_2=1$, we have 
\begin{align*}
m_{[f], [f]}=n_{0, [f]}(c_2(X))=48 \cdot 960=46080.
\end{align*}
For general $(r_1, r_2)$, suppose that $r_1>r_2$. 
Then because of $n_{0, r_i[f]}(\gamma_2)=0$
and the description of the diagonal (\ref{diagonal}), we have 
\begin{align*}
m_{r_1[f], r_2[f]}=m_{(r_1-r_2)[f], r_2[f]}=46080,
\end{align*}
by the induction hypothesis. 
The same applies to the case of $r_1<r_2$. 
Finally for $r_1=r_2=r$, 
using the induction hypothesis we obtain 
\begin{align*}
m_{r[f], r[f]}&=n_{0, r[f]}(c_2(X))-\sum_{\begin{subarray}{c}r'+r''=r \\  r'>0, r''>0 \end{subarray}}m_{r'[f], r''[f]} \\   
&=46080r-46080(r-1) \\
&=46080.
\end{align*}
Therefore the identity (\ref{id:meet}) holds. 
\end{proof}

\begin{thm}\label{thm:elliptic}
Conjecture~\ref{main conj} holds for $\beta=r[f]$ with $r\geqslant 1$. 
\end{thm}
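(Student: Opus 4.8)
The plan is to verify the three terms on the right hand side of Conjecture~\ref{main conj} separately and check they sum to the left hand side computed in Proposition~\ref{prop:elliptic}. First I would record the GV type invariants needed. From \cite[Section~6.3]{KP} together with the basis $\gamma_1=D\cdot B$, $\gamma_2=B^2$ of the relevant part of $H^4(X)$ and the expansions \eqref{diagonal}, one has $n_{0,r[f]}(\gamma_1)=960r$, $n_{0,r[f]}(\gamma_2)=0$, and I will also need the genus one invariants $n_{1,r[f]}$, which are likewise computed in \cite{KP} (they are proportional to the generic-fiber Euler class data; in fact $n_{1,r[f]}$ is independent of $r$ by the same fiberwise argument). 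Then for $\alpha=\alpha_1[D]+\alpha_2[B]$ I compute $\alpha\cdot\beta=\alpha\cdot r[f]$, using $D\cdot f=1$ and $B\cdot f=0$, so $\alpha\cdot r[f]=r\alpha_1$; and $\alpha^2=\alpha_1^2 D^2+2\alpha_1\alpha_2 DB+\alpha_2^2 B^2$, which I re-express in the basis $\gamma_1,\gamma_2$ using the intersection ring of $X$ (here $B^2=\gamma_2$, $D\cdot B=\gamma_1$, and $D^2$ is a known combination of $\gamma_1,\gamma_2$ coming from the Weierstrass relation $D^2=-D\cdot(\text{a multiple of }B)$, cf.\ the section class identity $D=E-4B$ and $E\cdot D$-type relations in \cite{KP}).

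Next I would assemble the right hand side. The third term is $\sum_{k\geqslant 1,\,k|r}\frac{r\alpha_1}{k}\,n_{1,(r/k)[f]}$; since all $n_{1,(r/k)[f]}$ equal the common value $n_{1,[f]}$, this is $n_{1,[f]}\,r\alpha_1\sigma(r)/r=n_{1,[f]}\,\alpha_1\,\sigma(r)$ — but I expect $n_{1,[f]}=0$ in this Weierstrass example (the genus one GV invariant of a generic Weierstrass fibration in pure fiber classes vanishes, as noted in \cite{KP}), so this term drops out and the contribution to the linear function is zero. The second term involves the meeting numbers, for which the preceding lemma gives $m_{r_1[f],r_2[f]}=46080$ for all $r_1,r_2>0$; writing $\beta_1=r_1[f]$, $\beta_2=r_2[f]$ with $r_1+r_2=r$, one has $(\alpha\cdot\beta_1)(\alpha\cdot\beta_2)=r_1 r_2\alpha_1^2$, so the second term is $-\frac{46080\,\alpha_1^2}{4r\alpha_1}\sum_{r_1+r_2=r}r_1 r_2=-\frac{46080\,\alpha_1}{4r}\cdot\frac{(r-1)r(r+1)}{6}$, a polynomial in $r$ times $\alpha_1$. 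The first term is $\frac{n_{0,r[f]}(\alpha^2)}{2r\alpha_1}$, where $n_{0,r[f]}(\alpha^2)$ is obtained from $n_{0,r[f]}(\gamma_1)=960r$, $n_{0,r[f]}(\gamma_2)=0$ and the expansion of $\alpha^2$ in the $\gamma_i$; this produces a term proportional to $\alpha_1^2/(r\alpha_1)$ hence linear in $\alpha_1$, plus a term proportional to $\alpha_1\alpha_2/(r\alpha_1)$ hence proportional to $\alpha_2$. Adding the three contributions should reproduce exactly $(20-1920r^2)\alpha_1+960\alpha_2$ from Proposition~\ref{prop:elliptic}; in particular the $\alpha_2$-coefficient must come out $960$ from the first term alone, and the $\alpha_1$-coefficient combines the $\alpha_1$-parts of the first and second terms.

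The main obstacle is bookkeeping with the intersection ring of $X$: I must pin down $D^2$ (equivalently the self-intersection data of the section, which in the Weierstrass model satisfies $D^2=-D\cdot c_1(\text{normal bundle})$, a specific multiple of $D\cdot B$ and $B^2$), so that $\alpha^2$ is correctly decomposed into $\gamma_1,\gamma_2$ and the apparent pole at $\alpha_1=0$ genuinely cancels — this cancellation is guaranteed abstractly by Theorem~\ref{constraint holds}, but here I want the explicit numbers. A cross-check is that, setting $\alpha=[B]$ (so $\alpha\cdot\beta=0$), the constraint \eqref{constraint} must hold with $n_{0,r[f]}(B^2)=0$ and $m_{r_1[f],r_2[f]}=46080$: indeed $\frac12\sum_{r_1+r_2=r}(B\cdot r_1[f])(B\cdot r_2[f])\cdot 46080=0$ since $B\cdot f=0$, consistent. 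Once the ring data is fixed, the remaining computation is the elementary identity $\frac{960r}{2r\alpha_1}\cdot(\text{coeff of }\gamma_1\text{ in }\alpha^2)-\frac{46080}{24}(r^2-1)\alpha_1 = (20-1920r^2)\alpha_1+960\alpha_2$, which I would verify by matching the coefficients of $\alpha_1$ and $\alpha_2$; checking $-\frac{46080}{24}=-1920$ already matches the $r^2$-coefficient, and the constant $20$ on the $\alpha_1$ side comes from the first term's $\alpha_1$-contribution after the $D^2$ decomposition, leaving a routine finish.
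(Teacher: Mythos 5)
Your overall strategy is the same as the paper's: take the left hand side from Proposition~\ref{prop:elliptic}, compute the three terms of the right hand side using $n_{0,r[f]}(D\cdot B)=960r$, $n_{0,r[f]}(B^2)=0$, the Weierstrass relation $D^2=-4\,D\cdot B$, and $m_{r_1[f],r_2[f]}=46080$, and match coefficients of $\alpha_1,\alpha_2$. The first two terms are handled correctly in outline: the first term gives $960(-2\alpha_1+\alpha_2)$ and the meeting-number term gives $-\frac{46080}{24}(r^2-1)\alpha_1=-1920(r^2-1)\alpha_1$.

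The genuine gap is your treatment of the genus one invariants. You assert that $n_{1,r[f]}$ is independent of $r$ and, moreover, that $n_{1,[f]}=0$, so that the third term drops out. Both claims are false: from \cite[Table~7]{KP} one has $n_{1,[f]}=-20$ while $n_{1,r[f]}=0$ for $r>1$, so the third term contributes exactly $-\,(\alpha\cdot r[f]/r)\,n_{1,[f]}=+20\alpha_1$ (only $k=r$ survives). Without this the bookkeeping cannot close: the first term contributes $-1920\alpha_1+960\alpha_2$, the meeting-number term contributes $-1920r^2\alpha_1+1920\alpha_1$, and these $\alpha_1$-constants cancel, leaving $-1920r^2\alpha_1+960\alpha_2$ — short of the target $(20-1920r^2)\alpha_1+960\alpha_2$ by precisely $20\alpha_1$. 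Your proposed repair, namely that the constant $20$ should emerge from the $D^2$ decomposition inside the first term, cannot work: that decomposition yields only the $r$-independent $-1920\alpha_1$ already accounted for. The missing $20\alpha_1$ is exactly the genus one contribution $n_{1,[f]}=-20$, which is also the source of the $20\alpha_1$ in $\langle\tau_1(\alpha)\rangle_{r[f]}=(20-1920r^2)\alpha_1+960\alpha_2$ via $D\cdot c_3(X)=-20$. So you need to correct the input data for $n_{1,r[f]}$; with the correct values the rest of your computation goes through as in the paper.
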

\begin{proof}
For $\alpha=\alpha_1[D]+\alpha_2[B]$
with $\alpha_1, \alpha_2 \in \mathbb{R}$, 
we have 
\begin{align*}
&\frac{n_{0, r[f]}(\alpha^2)}{2(\alpha \cdot r[f])}
-\frac{1}{4}\sum_{\begin{subarray}{c}r_1+r_2=r \\  r_i\in \mathbb{Z}_{>0} \end{subarray}}
\frac{(\alpha \cdot r_1[f])(\alpha \cdot r_2[f])}{(\alpha \cdot r[f])}m_{r_1[f], r_2[f]}-\sum_{k|r}(\alpha \cdot r[f]/k)n_{1, r[f]/k} \\
&=960(-2\alpha_1+\alpha_2)-\frac{1}{4}\sum_{\begin{subarray}{c}r_1+r_2=r \\  r_i\in \mathbb{Z}_{>0} \end{subarray}}
\frac{r_1 \alpha_1 \cdot r_2 \alpha_1}{r\alpha_1}\cdot 46080
+20\alpha_1 \\
&=960(-2\alpha_1+\alpha_2)-\frac{1}{4} \cdot 
\frac{1}{6}(r^2-1)\alpha_1\cdot 46080 
+20\alpha_1 \\
&=(20-1920r^2)\alpha_1+960\alpha_2. 
\end{align*}
Here we have used $n_{1, [f]}=-20$ and $n_{1, r[f]}=0$ 
for $r>1$ from~\cite[Table~7]{KP}. 
By comparing with Proposition~\ref{prop:elliptic}, we obtain the theorem. 
\end{proof}

\subsection{Product of CY 3-fold with elliptic curve}
We next consider the case of a CY 4-fold given by the 
product of a smooth projective CY 3-fold and an elliptic curve. 
\begin{prop}\label{prop on product}
Let $X=Y\times E$ be the product of a smooth projective Calabi-Yau 3-fold $Y$ with an elliptic curve $E$. Then 
Conjecture \ref{main conj} holds in the following cases:
\begin{enumerate}
\item 
$\beta=r[E]$ with $r\geqslant 1$, 
\item any $\beta\in H_2(Y,\mathbb{Z})\subset H_2(X,\mathbb{Z})$.
\end{enumerate}
\end{prop}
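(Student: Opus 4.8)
The plan is to treat the two families of classes separately; in each case I would (a) identify $M_1(X,\beta)$ together with its $\DT_4$-virtual class, (b) compute the descendent invariant $\langle\tau_1(\alpha)\rangle_\beta$, and (c) compute the right-hand side of Conjecture \ref{main conj} directly from the Gromov--Witten theory of $X=Y\times E$, and then compare the two.

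\textbf{Fiber classes $\beta=r[E]$.} A one-dimensional stable sheaf of class $r[E]$ is scheme-theoretically supported on a single fiber $Y_e:=Y\times\{e\}$ of $\mathrm{pr}_Y$ (a curve of class $r[E]$ is contracted by $\mathrm{pr}_Y$, and a sheaf spread over distinct fibers would be decomposable), and its pushforward is a rank $r$ degree $1$ stable bundle on the elliptic curve $e$; via the determinant map one gets $M_1(X,r[E])\cong Y\times M_E(r,1)\cong Y\times E$. I would then run the $\DT_4$ obstruction computation along $\iota\colon Y_e\hookrightarrow X$: the normal bundle is the trivial rank-$3$ bundle $\mathcal O_{Y_e}\otimes T_eY$, so a Koszul computation gives $\Ext^2_X(\iota_*F,\iota_*F)\cong \mathrm{pr}_Y^*(T_Y\oplus\Lambda^2 T_Y)$, which (since $K_Y\cong\mathcal O_Y$) is a hyperbolic pair $V\oplus V^\vee$ with $V=\mathrm{pr}_Y^*T_Y$ maximal isotropic for the Serre pairing; as $M_1(X,r[E])$ is smooth we are in the situation reviewed in Section~\ref{subsec:DT4}, and
\[
[M_1(X,r[E])]^{\mathrm{vir}}=\pm\,\mathrm{PD}\!\big(\mathrm{pr}_Y^* c_3(Y)\big)=\pm\,\chi(Y)\,[\{y\}\times E].
\]
For the descendent, the normalized universal sheaf restricted to $X\times(\{y\}\times M_E(r,1))$ is $\iota_{y*}$ of the normalized universal bundle on $E\times M_E(r,1)$, and since $\iota_y$ has codimension $3$ with trivial normal bundle, $\ch_4$ reduces to the local calculation already carried out in Section~\ref{heuristic argue}, giving $\int_{\{y\}\times M_E(r,1)}\tau_1(\alpha)=\tfrac{\alpha\cdot\beta}{r}$ and hence $\langle\tau_1(\alpha)\rangle_{r[E]}=\pm\,\chi(Y)\,\tfrac{\alpha\cdot\beta}{r}$. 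On the Gromov--Witten side there are no genus $0$ stable maps of class $r[E]$ (such a map is contracted by $\mathrm{pr}_Y$, and a rational component cannot cover $E$), so $n^X_{0,r[E]}(\gamma)=0$ for all $\gamma$; the recursion \eqref{meeting invs} then forces $m^X_{r_1[E],r_2[E]}=0$; and since $c_2(X)=\mathrm{pr}_Y^* c_2(Y)$, the Klemm--Pandharipande genus $1$ formula collapses to $\sum_r \mathrm{GW}^X_{1,r[E]}q^r=\sum_r n^X_{1,r[E]}\sum_d\tfrac{\sigma(d)}{d}q^{dr}$. Using $\overline M_{1,0}(X,r[E])\cong Y\times\overline M_{1,0}(E,r)$ and the fact that the $Y$-direction contributes $e(\mathrm{pr}_Y^*T_Y)$ to the obstruction, one gets $\mathrm{GW}^X_{1,r[E]}=\chi(Y)\cdot\mathrm{GW}^E_{1,r}$, so the classical value $\mathrm{GW}^E_{1,r}=\sigma(r)/r$ for the elliptic curve yields $n^X_{1,[E]}=\chi(Y)$ and $n^X_{1,r[E]}=0$ for $r>1$. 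Substituting this into Conjecture~\ref{main conj} gives exactly $-\chi(Y)\,\tfrac{\alpha\cdot\beta}{r}$, which matches $\langle\tau_1(\alpha)\rangle_{r[E]}$ precisely when one picks the orientation giving the minus sign.

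\textbf{Classes $\beta\in H_2(Y)$.} Here any one-dimensional stable sheaf of class $\beta$ is supported on a single fiber $Y\times\{e\}$ of $\mathrm{pr}_E$ and is the pushforward of a stable sheaf $G$ on $Y$ with $[G]=\beta$, $\chi(G)=1$, so $M_1(X,\beta)\cong M_1(Y,\beta)\times E$. Now $Y\times\{e\}\hookrightarrow X$ has trivial normal line bundle, so the $\DT_4$ obstruction $\Ext^2_X(\iota_*G,\iota_*G)\cong\mathcal{E}xt^2_Y(G,G)\oplus\mathcal{E}xt^1_Y(G,G)$ is the hyperbolic double of the $\DT_3$ obstruction of $M_1(Y,\beta)$; the $\DT_4$-virtual class is the ``shifted-cotangent''-type class built from $\mathcal{E}xt^2_Y$, and the resulting dimensional reduction gives
\[
[M_1(X,\beta)]^{\mathrm{vir}}=\pm\,[M_1(Y,\beta)]^{\mathrm{vir}}_{\DT_3}\boxtimes[E]\in H_0(M_1(Y,\beta))\otimes H_2(E).
\]
For the descendent I would use that $\mathbb F_{\mathrm{norm}}$ is $j_*$ of (a twist by a line bundle from $M_1(Y,\beta)$ of) the normalized universal sheaf $\mathbb G$ on $Y\times M_1(Y,\beta)$, along the diagonal copy of $E$; since $j$ has codimension $1$ with trivial normal bundle, $\ch_4(\mathbb F_{\mathrm{norm}})=j_*\ch_3(\mathbb G')$ by Grothendieck--Riemann--Roch. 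Writing $\alpha=\alpha_Y+c\,\omega_E+(\cdots)$ with $c=\alpha\cdot[\text{fiber of }\mathrm{pr}_Y]$, only the $\omega_E$-term survives in the $H_2(E)$-direction after $\pi_{M\ast}$, and the key input $\int_Y\ch_3(G)=\chi(G)=1$ (using $c_1(Y)=0$) gives $\langle\tau_1(\alpha)\rangle_\beta=\pm\,c\cdot\deg[M_1(Y,\beta)]^{\mathrm{vir}}_{\DT_3}$. On the Gromov--Witten side $\mathrm{GW}^X_{0,\beta}(\gamma)=0$ unless $\gamma$ has an $H^2(Y)\otimes H^2(E)$ component, in which case $\mathrm{GW}^X_{0,\beta}(\gamma'\otimes\omega_E)=(\gamma'\cdot\beta)\,\mathrm{GW}^Y_{0,\beta}$ by the divisor axiom, whence $n^X_{0,\beta}(\gamma'\otimes\omega_E)=(\gamma'\cdot\beta)n^Y_{0,\beta}$ and $n^X_{0,\beta}(\alpha^2)=2c\,(\alpha_Y\cdot\beta)\,n^Y_{0,\beta}$. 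Since the Künneth pairing on $H^4(X)$ vanishes between the classes that support nonzero $n^X_0$, the recursion forces $m^X_{\beta_1,\beta_2}=0$ for $\beta_i\in H_2(Y)$; and $\mathrm{GW}^X_{1,\beta}=0$ (the Hodge line of the genus $1$ domain is pulled back from the virtually $0$-dimensional $\overline M_{1,0}(Y,\beta)$, so $e(\mathbb E)$ kills the virtual class; equivalently $\chi(E)=0$), so $n^X_{1,\beta}=0$. Plugging in, the RHS of Conjecture~\ref{main conj} collapses to $c\cdot n^Y_{0,\beta}$.

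\textbf{The main obstacle.} Comparing, Conjecture~\ref{main conj} for $\beta\in H_2(Y)$ becomes exactly the identity $\deg[M_1(Y,\beta)]^{\mathrm{vir}}_{\DT_3}=n^Y_{0,\beta}$, i.e.\ the genus $0$ Gopakumar--Vafa/Katz conjecture for the $3$-fold $Y$ (equivalently, Conjecture~\ref{g=0 one dim sheaf conj} for $Y$); this is the genuine content for case (2), and granting it the conjecture follows with the orientation on $M_1(X,\beta)$ induced from $M_1(Y,\beta)$. Fiber classes, by contrast, are unconditional once one imports the classical value of $\mathrm{GW}^E_{1,r}$; there the delicate points are purely bookkeeping: tracking the sign/orientation so that a single choice simultaneously fits Conjectures~\ref{g=0 one dim sheaf conj} and~\ref{main conj}, and pinning down the genus $1$ Gromov--Witten contribution of $Y\times E$ (including the reduction $\overline M_{1,0}(X,r[E])\cong Y\times\overline M_{1,0}(E,r)$ and the obstruction-bundle analysis). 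I expect the reduction of the descendent $\DT_4$ integrals to $3$-fold / elliptic-curve data, and the orientation bookkeeping, to be the most error-prone parts of the write-up.
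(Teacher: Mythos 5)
Your overall strategy coincides with the paper's: identify $M_1(X,\beta)$ and its virtual class by dimensional reduction, compute $\ch_4$ of the normalized universal sheaf by Grothendieck--Riemann--Roch, and compute the right-hand side from the product structure of the Gromov--Witten theory. The one genuine divergence is in case (1) for $r>1$. The paper asserts $M_1(X,r[E])=\emptyset$ for $r>1$ and declares both sides of the conjecture zero; you instead keep $M_1(X,r[E])\cong Y\times M_E(r,1)\cong Y\times E$ and compute the nonzero contribution $\pm\chi(Y)\,(\alpha\cdot[E])$. Your version is the internally consistent one: the $k=r$ term of $-\sum_{k|\beta}\frac{\alpha\cdot\beta}{k}n_{1,\beta/k}$ equals $-(\alpha\cdot[E])\,n_{1,[E]}=-(\alpha\cdot[E])\,\chi(Y)$, which does not vanish, and the pushforward of a rank $r$ degree $1$ stable bundle on a fiber $\{y\}\times E$ is a stable sheaf on $X$ of class $r[E]$ with $\chi=1$ (exactly the $M_E(r,1)$ that the paper's own Section \ref{heuristic argue} invokes for multiple covers of elliptic curves). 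Two loose ends on your side: the fiber of $\mathrm{pr}_Y$ is $\{y\}\times E$, not $Y\times\{e\}$ as you write; and to get $M_1(X,r[E])$ \emph{equal} to $Y\times M_E(r,1)$ (and the obstruction bundle globally equal to $\mathrm{pr}_Y^{\ast}(T_Y\oplus\Lambda^2T_Y)$) you must rule out stable sheaves supported on infinitesimal thickenings of a fiber, e.g.\ by an iterated-extension argument using that $\Ext^1_X(\iota_{\ast}A,\iota_{\ast}B)\cong \Ext^1_E(A,B)\oplus \Hom_E(A,B)\otimes T_yY$ and that the relevant $\Hom_E$'s vanish for slope reasons.

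For case (2) you and the paper arrive at the same two quantities: the left-hand side reduces (via $\ch_4(\mathbb{F})=\ch_3(\mathbb{F}_Y)\cdot[\Delta_E]$ and $(\pi_{M_Y})_{\ast}\ch_3(\mathbb{F}_Y)=\chi=1$) to $\pm\bigl(\int_E\alpha_2\bigr)\deg[M_1(Y,\beta)]^{\rm{vir}}$, and the right-hand side to $\bigl(\int_E\alpha_2\bigr)\,n^Y_{0,\beta}$. You state explicitly that closing this gap is precisely Katz's genus-zero conjecture for the $3$-fold $Y$; the paper instead compares the left-hand side with $\langle\tau_0(\alpha^2)\rangle_{\beta}/(2\,\alpha\cdot\beta)$ via \cite[Corollary 2.7]{CMT1}, so what is actually established there is the compatibility of Conjecture \ref{main conj} with Conjecture \ref{g=0 one dim sheaf conj} in this geometry rather than an unconditional identity. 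Your conditional phrasing is the more accurate account of what is proved; apart from that, and from the $r>1$ discussion above, your argument is correct and essentially the paper's.
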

\begin{proof}
(1) By \cite[Lemma 2.16]{CMT2}, genus zero GW invariants vanish and 
$$n_{1,[E]}=\chi(Y), \quad n_{1,r[E]}=0, \quad \mathrm{if}\,\, r>1. $$
We have isomorphisms 
$$M_1(X,[E])\cong Y\times \Pic^1(E), \quad M_1(X,r[E])=\emptyset, \quad \mathrm{if}\,\, r>1, $$
whose virtual class satisfies 
$$[M_1(X,[E])]^{\mathrm{vir}}=-\chi(Y)\cdot[\Pic^1(E)], \quad [M_1(X,r[E])]^{\mathrm{vir}}=0, \quad \mathrm{if}\,\, r>1, $$
for certain choice of orientations.
Then Conjecture \ref{main conj} obviously holds when $r>1$. 

As for $r=1$ case, let $\Delta_Y\hookrightarrow Y\times Y$ and $\Delta_E\hookrightarrow E\times E$ be the diagonals. Then  
$$\mathbb{F}=\oO_{\Delta_Y}\boxtimes \oO_{E\times E}(\Delta_E) $$
is the normalized universal sheaf of $M_1(X,[E])$. 
By the Grothendieck-Riemann-Roch formula,  
$$\ch_4(\mathbb{F})=[\Delta_Y\times \Delta_E]. $$
For $\alpha=\alpha_1\oplus \alpha_2\in H^2(Y)\oplus H^2(E)$, we have 
$\tau_1(\alpha)=\alpha_1\oplus \alpha_2$. Therefore
$$\int_{[M_1(X,[E])]^{\mathrm{vir}}}\tau_1(\alpha)=-\chi(Y)\cdot \int_E\alpha_2=-(\alpha\cdot [E])\,n_{1,[E]}.$$
(2) By \cite[Lemma 2.16]{CMT2}, $n_{1,\beta}=0$. By \cite[Lemma 2.6]{CMT1}, we have an isomorphism
$$M_1(X,\beta)\cong M_1(Y,\beta)\times E, $$
under which the virtual class satisfies 
$$[M_1(X,\beta)]^{\mathrm{vir}}=[M_1(Y,\beta)]^{\mathrm{vir}}\otimes [E], $$
for certain choice of orientations.

Let $\mathbb{F}_Y$ be the normalized universal sheaf of $M_1(Y,\beta)$ and $\Delta_E\hookrightarrow E\times E$ be the diagonal embedding. Then 
$$\mathbb{F}=\mathbb{F}_Y\boxtimes \oO_{\Delta_E}$$
is the normalized universal sheaf of $M_1(X,\beta)$ whose Chern character satisfies 
$$\ch_3(\mathbb{F})=\ch_2(\mathbb{F}_Y)\cdot [\Delta_E], \quad \ch_4(\mathbb{F})=\ch_3(\mathbb{F}_Y)\cdot [\Delta_E]. $$
Let $\pi_{M_Y}:M_1(Y,\beta)\times Y\to M_1(Y,\beta)$ be the projection. Then 
\begin{align*}
\int_{[M_1(X,\beta)]^{\mathrm{vir}}}\tau_1(\alpha)
&=\int_E\alpha_2\cdot \int_{[M_1(Y,\beta)]^{\mathrm{vir}}}(\pi_{M_Y})_*\ch_3(\mathbb{F}_Y) \\
&=\int_E\alpha_2\cdot \int_{[M_1(Y,\beta)]^{\mathrm{vir}}}\ch_0((\pi_{M_Y})_*\mathbb{F}_Y) \\
&=\bigg(\int_E\alpha_2\bigg)\cdot\deg[M_1(Y,\beta)]^{\mathrm{vir}},  
\end{align*}
where the second equality is by the Grothendieck-Riemann-Roch formula.

By \cite[Corollary 2.7]{CMT1}, for certain choice of orientations, we have 
\begin{align*}
\int_{[M_1(X,\beta)]^{\mathrm{vir}}}\tau_0(\alpha^2)=2\bigg(\int_\beta\alpha\bigg)\cdot \bigg(\int_E\alpha_2\bigg)\cdot \deg[M_1(Y,\beta)]^{\mathrm{vir}}. \end{align*}
Note that meeting invariants vanish obviously in this product case, therefore Conjecture \ref{main conj} holds in this case.
\end{proof}

\subsection{Two local surfaces}
In this section, we consider local surfaces 
\begin{align*}X=\mathrm{Tot}_{\mathbb{P}^2}(\oO(-1)\oplus \oO(-2)), \quad 
\mathrm{Tot}_{\mathbb{P}^1\times \mathbb{P}^1}(\oO(-1,-1)\oplus \oO(-1,-1)). \end{align*}
Although they are non-compact, the moduli spaces $M_1(X,\beta)$ of one dimensional stable sheaves are smooth projective varieties
(ref. \cite[Section 3.1]{CMT1}). So it makes sense to study Conjecture \ref{main conj} for these examples.

\begin{prop}\label{prop on local P2}
Let $X=\mathrm{Tot}_{\mathbb{P}^2}(\oO(-1)\oplus \oO(-2))$ and $\beta=d\,[\mathbb{P}^1]\in H_2(X,\mathbb{Z})$.
Then Conjecture \ref{main conj} holds for $d\leqslant 3$.
\end{prop}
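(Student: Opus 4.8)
The plan is to verify the identity of Conjecture~\ref{main conj} for $\beta=d[\mathbb{P}^1]$, $d\leqslant 3$, by computing both sides, reducing the left hand side to classical intersection theory on a moduli space of one dimensional sheaves on $\mathbb{P}^2$ in the spirit of \cite[Section~3.1]{CMT1}. Write $Y:=\mathrm{Tot}_{\mathbb{P}^2}(\oO(-1))$, so that $X=K_Y$ and the zero section $\iota\colon\mathbb{P}^2\hookrightarrow X$ has normal bundle $N=\oO(-1)\oplus\oO(-2)$. Since neither $\oO(-1)$ nor $\oO(-2)$ has a nonzero section along a plane curve of degree $\leqslant 3$, every stable sheaf $F$ with $[F]=d[\mathbb{P}^1]$, $\chi(F)=1$ and $d\leqslant 3$ is scheme-theoretically supported on the zero section, and the extra summands $\Hom_{\mathbb{P}^2}(F,F(-1))$, $\Hom_{\mathbb{P}^2}(F,F(-2))$ of $\Ext^1_X(\iota_*F,\iota_*F)$ vanish by stability; hence $M_1(X,d[\mathbb{P}^1])\cong M_1(\mathbb{P}^2,d,1)$. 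For $d=1,2,3$ the latter is the smooth projective variety $(\mathbb{P}^2)^{\vee}$, $\mathbb{P}^5=|\oO_{\mathbb{P}^2}(2)|$, respectively the universal plane cubic $\{(p,C):p\in C\}\subset\mathbb{P}^2\times|\oO_{\mathbb{P}^2}(3)|$, with universal sheaf $\iota_*\mathbb{F}_{\mathbb{P}^2}$, where $\mathbb{F}_{\mathbb{P}^2}$ is the structure sheaf of the universal curve (for $d=1,2$) or its twist by the relative $\oO(1)$ (for $d=3$), normalised as in \eqref{norm univ sheaf}.

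Next I would determine the $\DT_4$ virtual class. Serre duality on $\mathbb{P}^2$ gives $\Ext^1_{\mathbb{P}^2}(F,F(-1))\cong\Ext^1_{\mathbb{P}^2}(F,F(-2))^{\vee}$ and $\Hom_{\mathbb{P}^2}(F,F(-3))=0$, so $\Ext^2_X(\iota_*F,\iota_*F)\cong W_F\oplus W_F^{\vee}$ with $W_F:=\Ext^1_{\mathbb{P}^2}(F,F(-1))$; thus $M_1(X,d[\mathbb{P}^1])$ is of shifted-cotangent type in the sense of \cite{CL1}, with obstruction bundle $\mathcal{W}:=\mathcal{E}xt^1_{\pi_M}(\mathbb{F}_{\mathbb{P}^2},\mathbb{F}_{\mathbb{P}^2}(-1))$ locally free of rank $d^2$, and virtual class $\pm c_{d^2}(\mathcal{W})\cap[M_1(\mathbb{P}^2,d,1)]$. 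Since $c_1(Y)\cdot d[\mathbb{P}^1]=2d$ is even, the preferred orientation \eqref{prefer choice} fixes the sign: $[M_1(X,d[\mathbb{P}^1])]^{\mathrm{vir}}=-c_{d^2}(\mathcal{W})\cap[M_1(\mathbb{P}^2,d,1)]$. The Chern classes of $\mathcal{W}$ then follow from a resolution of $\mathbb{F}_{\mathbb{P}^2}$ by Grothendieck--Riemann--Roch.

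Now I would compute the descendent insertion and the right hand side. From $\ch(\iota_*\mathbb{F}_{\mathbb{P}^2})=(\iota\times\id)_*\!\big(\ch(\mathbb{F}_{\mathbb{P}^2})\,\td(N)^{-1}\big)$ and the projection formula (so that no factor $e(N)$ survives), for $\alpha=a\,h\in H^2(X)$, with $h$ the generator of $H^2(X)$, the class $\tau_1(\alpha)$ equals the push-forward along $\mathbb{P}^2\times M_1(\mathbb{P}^2,d,1)\to M_1(\mathbb{P}^2,d,1)$ of $a\,h^2$ times the codimension-$2$ component of $\ch(\mathbb{F}_{\mathbb{P}^2})\,\td(N)^{-1}$, and similarly $\tau_0(h^2)$ is obtained from its codimension-$1$ component. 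For $d=1$ one finds $\tau_1(\alpha)=\tfrac12(\alpha\cdot\beta)\,\tau_0(h^2)$; since in this case there are no meeting terms and $n_{1,1}=0$ (no elliptic curve has class $[\mathbb{P}^1]$), Conjecture~\ref{main conj} then reduces exactly to the already-verified Conjecture~\ref{g=0 one dim sheaf conj} for $\beta=[\mathbb{P}^1]$. For $d=2,3$ one plugs in: $n_{0,d}(h^2)$ and $n_{1,d}$ are as recorded in \cite{KP} (with $n_{1,2}=0$, again since no elliptic curve has class $2[\mathbb{P}^1]$), and the meeting invariants $m_{1,1}$ and $m_{1,2}$ follow from the recursion \eqref{meeting invs}, which on $\mathbb{P}^2$ only involves $n_{0,d}(h^2)$ and $n_{0,d}(c_2(X))=-4\,n_{0,d}(h^2)$. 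Matching the two sides for $d=1,2,3$ proves the proposition, and the agreement also confirms that the minus sign in front of the genus one term is the correct one for the orientation \eqref{prefer choice}.

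I expect the $d=3$ case to be the main obstacle. There $M_1(\mathbb{P}^2,3,1)$ is not a projective space, so the computation has to be carried out in its Chow ring (e.g.\ that of the universal cubic, a bidegree-$(3,1)$ divisor in $\mathbb{P}^2\times\mathbb{P}^9$); moreover the universal sheaf carries a nontrivial normalisation twist $\det(\pi_{M*}\mathbb{F})$, so that the rank-$9$ bundle $\mathcal{W}$ and the class $\tau_1(\alpha)$ must be produced by Grothendieck--Riemann--Roch and then intersected explicitly. By contrast the cases $d=1,2$ are short computations on $(\mathbb{P}^2)^{\vee}$ and $\mathbb{P}^5$.
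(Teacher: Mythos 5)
Your strategy coincides with the paper's: identify $M_1(X,d[\mathbb{P}^1])$ with $M_1(\mathbb{P}^2,d)$ via the zero section, realise the $\DT_4$ virtual class as $\pm e(\mathcal{W})$ for the rank-$d^2$ bundle $\mathcal{W}=\mathcal{E}xt^1_{\pi_M}(\mathbb{F},\mathbb{F}(-1))$ (the maximal isotropic piece of $\Ext^2_X$), fix the sign by \eqref{prefer choice} (correctly $-1$ since $c_1(Y)\cdot\beta=2d$), compute $\tau_1$ by Grothendieck--Riemann--Roch, and compare with the right hand side assembled from \cite{KP} and the meeting-invariant recursion. Your structural claims check out, including the $d=1$ reduction $\tau_1(\alpha)=\tfrac12(\alpha\cdot\beta)\tau_0(h^2)$.

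The gap is that the content of the proposition for $d=2,3$ lies precisely in the computations you defer, and two of the ingredients you wave at are not yet pinned down. First, the meeting invariants: the Künneth term $\sum_{i,j}n_{0,\beta_1}(S_i)g^{ij}n_{0,\beta_2}(S_j)$ is not expressible from $n_{0,d}(h^2)$ alone without knowing the coefficient with which the diagonal of the (compactified) $X$ restricts to the zero section; the paper computes it via $\overline{X}=\mathbb{P}(\oO(-2)\oplus\oO(-1)\oplus\oO)$ and the resulting coefficient is $2=\deg e(\oO(-1)\oplus\oO(-2))$, giving $m_{1,1}=6$, $m_{1,2}=4$ (and note $n_{1,3}=-1\neq 0$ here, so the "no elliptic curves" heuristic you use for $d=1,2$ must be replaced by citing \cite{KP} in general). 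Second, for $d=3$ the universal sheaf over the universal cubic $\mathcal{C}=\{(p,C)\}$ is the family $\oO_C(p)=\mathcal{H}om(I_{C,p},\oO_C)$, which is \emph{not} a line-bundle twist of $\oO_{\mathcal{C}}$; the paper handles it by writing the $K$-theory class of the universal ideal sheaf on $\mathbb{P}^9\times\mathbb{P}^2\times\mathbb{P}^2$ via the Beilinson resolution, dualising, and normalising, after which $e(\mathcal{W})=H_1^7H_2^2|_{\mathcal{C}}$ and $\ch_4$ of the normalised sheaf give $\langle\tau_1([\mathbb{P}^1])\rangle_3=\tfrac32$. Until these explicit classes and intersection numbers are produced and matched against $\tfrac{1}{6}n_{0,3}(\mathrm{pt})-\tfrac13 m_{1,2}-3n_{1,3}=\tfrac32$, the proposition is not proved; your outline would, however, lead to the same computation.
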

\begin{proof}
We present the proof for $d=3$ case (the $d=1,2$ cases are easier versions of the same approach).
We first compute the meeting invariants, consider a compactification of $X$: 
$$\overline{X}=\mathbb{P}(\oO(-2)\oplus \oO(-1)\oplus \oO)$$
with the projection $\pi:\overline{X}\to \mathbb{P}^2$ and a section $\iota: \mathbb{P}^2\to \overline{X}$. Then 
$$H^4(\overline{X},\mathbb{Z})=\mathbb{Z}\langle T_1,T_2,T_3 \rangle, $$ 
where $T_1=[\iota(\mathbb{P}^2)]$, $T_2=[\pi^{-1}(\mathrm{pt})]$ and $T_3$ is the class of a surface in the infinite divisor. Their intersection matrix is 
\begin{align*}
g=\begin{pmatrix}
2 & 1 & 0 \\
1 & 0 & 0 \\
0 & 0 & \ast
\end{pmatrix}
\quad \mathrm{with}  \quad 
g^{-1}=\begin{pmatrix}
0 & 1 & 0 \\
1 & -2 & 0 \\
0 & 0 & (\ast)^{-1}
\end{pmatrix}.
\end{align*}
For any $\beta\in H_2(X)\subset H_2(\overline{X})$, we have 
$$n_{0,\beta}(T_1)=(T_1\cdot T_1)\cdot n_{0,\beta}([\mathrm{pt}]), \quad 
n_{0,\beta}(T_2)=n_{0,\beta}([\mathrm{pt}]),\quad n_{0,\beta}(T_3)=0. $$
Then
$$m_{1,1}=n_{0,1}(c_2(X))+\sum n_{0,1}(T_i)g^{ij}n_{0,1}(T_j)=6, $$
$$m_{1,2}=m_{1,1}+\sum n_{0,1}(T_i)g^{ij}n_{0,2}(T_j)=4. $$
So for $\alpha=[\mathbb{P}^1]\in H^2(X)$, the RHS of Conjecture \ref{main conj} is  
$$\frac{n_{0,3}([\mathrm{pt}])}{6}-\frac{1}{3}m_{1,2}-3\,n_{1,3}=\frac{3}{2},$$
where we use $n_{0,3}([\mathrm{pt}])=n_{1,3}=-1$ (ref. \cite[pp. 22]{KP}). 

Next we compute the $\DT_4$ descendent invariant. By \cite[Prop. 3.1]{CMT1}, there is an isomorphism 
\begin{align}\label{isom:P2}
M_1(\mathbb{P}^2,d) \stackrel{\cong}{\to}
M_1(X,d)
\end{align}
given by push-forward along the zero section 
$\mathbb{P}^2 \hookrightarrow X$. 
Under the above isomorphism, 
the virtual class can be computed as the Euler class of a 
certain obstruction bundle on $M_1(\mathbb{P}^2,d)$.

More specifically, there is a natural support morphism to the linear system of degree $3$ curves
$$M_1(\mathbb{P}^2,3) \rightarrow |\mathcal{O}(3)| = \mathbb{P}^9.$$  
Moreover, if we denote 
$$\mathcal{C} \hookrightarrow \mathbb{P}^9 \times \mathbb{P}^2$$
to be the universal curve over this linear system,
there is an isomorphism
$\mathcal{C} \cong M_1(\mathbb{P}^2,3)$
which sends the pair $(C,p)$ to the dual (on $C$) of the ideal sheaf $I_{C,p}$.

Let $\mathcal{V}$ denote the vector bundle on $M_1(\mathbb{P}^2,3)$ whose fiber at a point $[E]$ is 
$$\Ext^1_{\mathbb{P}^2}(E,E(-1)) = \RHom_{\mathbb{P}^2}(I_{C,p},I_{C,p}(-1))[1].$$  
Its top Chern class can be computed via the diagram:
\begin{align*}
\xymatrix{
\cC \times \mathbb{P}^2
\ar@<-0.3ex>@{^{(}->}[r]^-{j}
\ar[d]_{\pi_{\cC}}\ar@{}[dr]|\square &
 \mathbb{P}^9 \times \mathbb{P}^2 \times \mathbb{P}^2
 \ar[d]^{\pi_{1, 2}} \\
\cC \ar@<-0.3ex>@{^{(}->}[r]^-{j} & \mathbb{P}^9 \times \mathbb{P}^2. 
}
\end{align*}
By Beilinson's resolution of diagonal, the $K$-theory class of the universal ideal sheaf
$\iI$ on $\mathcal{C}\times\mathbb{P}^2$ is given by the pullback
$[\mathcal{I}] = j^{\ast}[\fF]$, 
where $[\fF]$ is 
the $K$-theory class of $\mathbb{P}^9 \times \mathbb{P}^2 \times \mathbb{P}^2$
given by
\begin{align*}
[\fF] &= \pi_{1,3}^{\ast}[\mathcal{O}_{\mathcal{C}}] - \pi_{2,3}^{*}[\mathcal{O}_{\Delta}] \\
&= 3[\mathcal{O}(0,-1,0)] - [\mathcal{O}(-1,0,-3)] - [\mathcal{O}(0,-1,1)] - [\mathcal{O}(0,-2,-1)].
\end{align*}
The $K$-theory class of the corresponding universal sheaf of $M_1(\mathbb{P}^2,3)$ is the pull-back of $[\fF^{\vee}\otimes\oO(0,0,-3)[1]]$.
Its push-forward to the moduli space
$M_1(\mathbb{P}^2,3)$
 has determinant $\oO(1,0)$. Therefore we define normalized universal sheaf (as an element in $K$-theory):
\begin{align*}[\eE]&=-[\fF^{\vee}\otimes\oO(0,0,-3)\otimes\oO(-1,0,0)] \\
&=-3[\oO(-1,1,-3)]+[\oO(0,0,0)]+[\oO(-1,1,-4)]+[\oO(-1,2,-2)].
\end{align*}
From this, we obtain 
\begin{align*}
[\mathcal{V}] =j^{\ast}\pi_{1,2, \ast}(-[\eE]^{\vee}\otimes[\eE]\otimes\mathcal{O}(0,0,-1))=(H_1^7\cdot H_2^2)|_{\mathcal{C}}.
\end{align*}
Moreover $\ch_4(\eE)$
is written as
\begin{align*}
\ch_4(\eE) &=-\frac{3}{2}H_1\cdot H_3+\mathrm{other}\,\,\mathrm{terms},
\end{align*}
where the other terms do not contribute to the descendent 
$\DT_4$-invariant. 
Therefore 
 \begin{align*}
 \int_{[M_1(X,3)]^{\mathrm{vir}}}\tau_1([\mathbb{P}^1])
 =(-1)\cdot \int_{\mathbb{P}^9\times \mathbb{P}^2}(H_1^7\cdot H_2^2)(-\frac{3}{2}H_1\cdot H_3)(H_1+3H_2) =\frac{3}{2}, 
\end{align*}
where the minus sign $(-1)^{c_1(Y)\cdot\beta-1}=-1$ (here $Y=\mathrm{Tot}_{\mathbb{P}^2}(\oO(-1))$) is the preferred choice of orientations \eqref{prefer choice}.
Therefore Conjecture \ref{main conj} holds in this case.
\end{proof}

\begin{prop}\label{prop on local P1 product} 
Let $X=\mathrm{Tot}_{\mathbb{P}^1\times \mathbb{P}^1}(\oO(-1,-1)\oplus \oO(-1,-1))$ and $\beta=(d_1,d_2)\in H_2(X,\mathbb{Z})$.
Then Conjecture \ref{main conj} holds for $d_1,d_2\leqslant 2$.
\end{prop}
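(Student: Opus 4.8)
The plan is to run, for $S=\mathbb{P}^1\times\mathbb{P}^1$, the same argument used in the proof of Proposition~\ref{prop on local P2} (and in particular its $d=3$ part). By the symmetry interchanging the two factors it suffices to treat $\beta=(1,1)$, $(2,1)$ and $(2,2)$. The curves of class $(d_1,d_2)$ have arithmetic genus $(d_1-1)(d_2-1)$, so $(1,1)$ and $(2,1)$ are the genus-zero cases, where $\chi(\oO_C)=1$ and $M_1(S,\beta)=|\oO(d_1,d_2)|$ is a projective space; these are the easy analogues of $d=1,2$ on local $\mathbb{P}^2$. The class $(2,2)$ is genus one ($\chi(\oO_C)=0$) and is the substantial case, exactly parallel to $\beta=3$ on local $\mathbb{P}^2$: there $M_1(S,(2,2))$ is identified with the universal curve $\mathcal{C}\hookrightarrow\mathbb{P}^8\times S$ over $|\oO(2,2)|=\mathbb{P}^8$ via $(C,p)\mapsto$ the $\oO_C$-dual of $I_{C,p}$.

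For the right-hand side of Conjecture~\ref{main conj}, first I compute the meeting invariants. Compactify to $\overline{X}=\mathbb{P}(\oO(-1,-1)\oplus\oO(-1,-1)\oplus\oO)$ with projection $\pi\colon\overline X\to S$ and zero section $\iota$, and take a basis of $H^4(\overline X,\mathbb{Z})$ given by $T_1=[\iota(S)]$, the fibre class $T_2=[\pi^{-1}(\mathrm{pt})]$, and classes supported on the divisor at infinity. Since the curves of class $\beta$ all lie on $\iota(S)$, the genus-zero invariants vanish on the infinity classes and satisfy $n_{0,\beta}(T_1)=(T_1\cdot T_1)\,n_{0,\beta}([\mathrm{pt}])=2\,n_{0,\beta}([\mathrm{pt}])$ and $n_{0,\beta}(T_2)=n_{0,\beta}([\mathrm{pt}])$. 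Plugging these into the recursion \eqref{meeting invs}, with the values of $n_{0,\beta}([\mathrm{pt}])$ and $n_{0,\beta}(c_2(X))$ obtained from $\mathrm{GW}_{0,(d_1,d_2)}([\mathrm{pt}])=\binom{d_1+d_2}{d_1}^2/(d_1+d_2)^2$ (\cite{KP}, Example~\ref{exam on constrain}) together with the multiple cover formula, determines all the $m_{\beta_1,\beta_2}$. Moreover $\alpha^2|_{S}=2\alpha_1\alpha_2\,[\mathrm{pt}]$ for $\alpha=\alpha_1H_1+\alpha_2H_2$, so $n_{0,\beta}(\alpha^2)=2\alpha_1\alpha_2\,n_{0,\beta}([\mathrm{pt}])$; combining with the $n_{1,\beta}$ read off from \cite{KP}, the right-hand side becomes an explicit linear function of $(\alpha_1,\alpha_2)$.

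For the left-hand side, use the isomorphism $M_1(S,\beta)\stackrel{\cong}{\to}M_1(X,\beta)$ by push-forward along the zero section (\cite{CMT1}). Since $\oO(-1,-1)=K_S\otimes\oO(-1,-1)^{-1}$ is Serre-dual to itself, the two normal directions contribute a hyperbolic summand $\Ext^1_S(E,E\otimes\oO(-1,-1))^{\oplus 2}$ to $\Ext^2_X(i_{\ast}E,i_{\ast}E)$ (the remaining pieces $\Ext^2_S(E,E)$ and $\Hom_S(E,E\otimes K_S)$ vanishing), so $[M_1(X,\beta)]^{\mathrm{vir}}=(-1)^{c_1(Y)\cdot\beta-1}\,\mathrm{PD}(e(\mathcal{V}))$ for the obstruction bundle $\mathcal{V}$ with fibre $\Ext^1_S(E,E\otimes\oO(-1,-1))$ (of ranks $2,4,8$ for $\beta=(1,1),(2,1),(2,2)$), where $Y=\mathrm{Tot}_S(\oO(-1,-1))$ so that $c_1(Y)\cdot\beta=d_1+d_2$, using the preferred orientation \eqref{prefer choice}. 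I then compute the $K$-theory class of the universal sheaf on $\mathcal{C}\times S$ (resp. of $\oO_{\mathcal{C}}$ for $\beta=(1,1),(2,1)$) by pulling back the Koszul / K\"unneth resolution of the diagonal $\Delta_S$, namely the external product of the two Beilinson resolutions $0\to\oO(-1)\boxtimes\oO(-1)\to\oO\boxtimes\oO\to\oO_{\Delta_{\mathbb{P}^1}}\to 0$; twisting so that its push-forward to $M_1(S,\beta)$ has trivial determinant produces $\mathbb{F}_{\mathrm{norm}}$, from which $\ch_4(\mathbb{F}_{\mathrm{norm}})$ (keeping only the terms that survive integration), $\tau_1(\alpha)$, and finally $\langle\tau_1(\alpha)\rangle_\beta=(-1)^{d_1+d_2-1}\int e(\mathcal{V})\cdot\tau_1(\alpha)$ are obtained as a classical intersection number on $\mathbb{P}^N\times S$. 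Comparing this linear function of $(\alpha_1,\alpha_2)$ with the right-hand side from the previous paragraph proves the proposition.

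The main obstacle is the $\beta=(2,2)$ case of the left-hand side: identifying the rank-$8$ obstruction bundle $\mathcal{V}$ explicitly on $\mathcal{C}\subset\mathbb{P}^8\times S$ and pushing the normalized universal sheaf and its $\ch_4$ through the K\"unneth resolution of $\Delta_S$, which is bulkier than the single Beilinson resolution on $\mathbb{P}^2$. Once these $K$-theory classes are in hand, everything reduces to bookkeeping of monomials in the hyperplane class of $\mathbb{P}^8$ and the two hyperplane classes on $\mathbb{P}^1\times\mathbb{P}^1$, and the cases $(1,1)$ and $(2,1)$ are routine by the same method, just as $d=1,2$ are routine next to $d=3$ on local $\mathbb{P}^2$.
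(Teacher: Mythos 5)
Your proposal follows essentially the same route as the paper's proof: compactify to $\mathbb{P}(\oO(-1,-1)^{\oplus 2}\oplus\oO)$ to compute the meeting invariants from the recursion and the Klemm--Pandharipande values of $n_{0,\beta}([\mathrm{pt}])$ and $n_{1,\beta}$; then identify $M_1(X,(2,2))$ with the universal $(1,2,2)$-divisor $\cC\subset\mathbb{P}^8\times\mathbb{P}^1\times\mathbb{P}^1$, extract the normalized universal sheaf and the (rank-$8$) obstruction bundle in $K$-theory from the resolution of the diagonal of $\mathbb{P}^1\times\mathbb{P}^1$, and evaluate $\langle\tau_1(\alpha)\rangle_\beta$ with the preferred orientation \eqref{prefer choice} as a classical intersection number, matching $-2(a+b)$ on both sides. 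The computations you defer to ``bookkeeping'' are exactly the ones the paper carries out explicitly, so the plan is sound and no new idea is missing.
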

\begin{proof}
Similar to the proof of Proposition \ref{prop on local P2}, we present the proof for $(2,2)$ class here. 
Consider a compactification of $X$: 
$$\overline{X}=\mathbb{P}(\oO(-1,-1)\oplus \oO(-1,-1)\oplus \oO), $$
with the projection $\pi:\overline{X}\to \mathbb{P}^1\times \mathbb{P}^1$ and a section $\iota: \mathbb{P}^1\times \mathbb{P}^1\to \overline{X}$. Then 
$$H^4(\overline{X},\mathbb{Z})=\mathbb{Z}\langle T_1,T_2,T_3 \rangle, $$ 
where $T_1=[\iota(\mathbb{P}^1\times \mathbb{P}^1)]$, $T_2=[\pi^{-1}(\mathrm{pt})]$ and $T_3$ is the class of a surface in the infinite divisor. Their intersection matrix is 
\begin{align*}
g=\begin{pmatrix}
2 & 1 & 0 \\
1 & 0 & 0 \\
0 & 0 & \ast
\end{pmatrix}
\quad \mathrm{with}  \quad 
g^{-1}=\begin{pmatrix}
0 & 1 & 0 \\
1 & -2 & 0 \\
0 & 0 & (\ast)^{-1}
\end{pmatrix}.
\end{align*}
For any $\beta\in H_2(X)\subset H_2(\overline{X})$, we have 
$$n_{0,\beta}(T_1)=(T_1\cdot T_1)\cdot n_{0,\beta}([\mathrm{pt}]), \quad 
n_{0,\beta}(T_2)=n_{0,\beta}\left([\mathrm{pt}]\right),\quad n_{0,\beta}(T_3)=0. $$
By the definition of meeting invariants, we have 
\begin{align*}
&m_{(0,1),(2,1)}=2\,n_{0,(0,1)}\left([\mathrm{pt}]\right)n_{0,(2,1)}\left([\mathrm{pt}]\right)+2\,n_{0,(0,1)}\left([\mathrm{pt}]\right)n_{0,(2,0)}\left([\mathrm{pt}]\right)=2, \\ 
& m_{(0,2),(2,0)}=2\,n_{0,(0,2)}([\mathrm{pt}])^2=0,  \\
&m_{(1,0),(1,2)}=2\,n_{0,(1,0)}([\mathrm{pt}])n_{0,(1,2)}([\mathrm{pt}])+2\,n_{0,(1,0)}([\mathrm{pt}])n_{0,(0,2)}([\mathrm{pt}])=2, \\
&m_{(1,1),(1,1)}=-2\,n_{0,(1,1)}([\mathrm{pt}])+2\,n_{0,(1,1)}([\mathrm{pt}])^2-4\,n_{0,(1,0)}([\mathrm{pt}])^2=-4.
\end{align*}
For $\alpha=(a,b)\in H^2(X)$, the RHS of Conjecture \ref{main conj} is 
\begin{align*}
\frac{2ab\cdot n_{0,(2,2)}([\mathrm{pt}])}{4(a+b)}-\frac{1}{8(a+b)}\sum_{\beta_1+\beta_2=(2,2)}(\alpha\cdot\beta_1)(\alpha\cdot\beta_2)m_{\beta_1,\beta_2}- 2(a+b)n_{1,(2,2)}=-2(a+b),
\end{align*}
where we use $n_{0,(2,2)}([\mathrm{pt}])=2$, $n_{1,(2,2)}=1$ (ref. \cite[pp. 24]{KP}).

Next, we compute the $\DT_4$ descendent invariant. 
The method is similar to the proof of Proposition \ref{prop on local P2}. 
Similarly to (\ref{isom:P2}), we have an isomorphism 
\begin{align*}
M_1(\mathbb{P}^1\times\mathbb{P}^1 ,\beta) \stackrel{\cong}{\to}
M_1(X,\beta).
\end{align*}
For $\beta=(2,2)$, it is isomorphic to the $(1,2,2)$ divisor 
\begin{align*}
\mathcal{C}\hookrightarrow \mathbb{P}^8\times \mathbb{P}^1\times \mathbb{P}^1. 
\end{align*}
We have a commutative diagram 
\begin{align*}
\xymatrix{
\cC \times(\mathbb{P}^1\times\mathbb{P}^1)  
\ar@<-0.3ex>@{^{(}->}[r]^-{j}
\ar[d]_{\pi_{\cC}}\ar@{}[dr]|\square  &
 \mathbb{P}^8 \times (\mathbb{P}^1\times\mathbb{P}^1) \times (\mathbb{P}^1\times\mathbb{P}^1)
 \ar[d]^{\pi_{1, 2}} \\
\cC \ar@<-0.3ex>@{^{(}->}[r]^-{j} & \mathbb{P}^8 \times (\mathbb{P}^1\times\mathbb{P}^1). 
}
\end{align*}
The $K$-theory class of the normalized universal sheaf of $M_1(X,\beta)$ 
is the pullback of the following $K$-theory class of 
$\mathbb{P}^8 \times (\mathbb{P}^1 \times \mathbb{P}^1) \times 
(\mathbb{P}^1 \times \mathbb{P}^1)$, 
\begin{align*}
[\eE]=[\oO]-[\oO(-1,1,0,-1,-2)]-[\oO(-1,0,1,-2,-1)]+[\oO(-1,1,1,-1,-1)]. 
\end{align*}
From this, we can compute the virtual class 
$$[M_1(X,(2,2))]^{\mathrm{vir}}=-2H_1^6H_2H_3|_{\mathcal{C}}, $$
for certain choice of orientations and 
\begin{align*}
\int_{[M_1(X,(2,2))]^{\mathrm{vir}}}\tau_1(\alpha)&=
(-1)\int_{\mathbb{P}^8 \times\mathbb{P}^1 \times\mathbb{P}^1}(-2)H_1^6H_2H_3(H_1+2H_2+2H_3)(a+b)(-H_1) \\
&=-2(a+b),
\end{align*}
where the minus sign $(-1)^{c_1(Y)\cdot\beta-1}=-1$ (here $Y=\mathrm{Tot}_{\mathbb{P}^1\times \mathbb{P}^1}(\oO(-1,-1))$) is the preferred choice of orientations \eqref{prefer choice}.
Therefore Conjecture \ref{main conj} holds in this case.
\end{proof}

\subsection{Local Fano 3-fold}

\begin{prop}\label{local Fano 3}
Let $X=K_{\mathbb{P}^3}$ and $\beta=d\,[\mathbb{P}^1]\in H_2(X,\mathbb{Z})$. Then Conjecture \ref{main conj} holds if $d\leqslant 3$.
\end{prop}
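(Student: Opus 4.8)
The plan is to run the argument of Propositions~\ref{prop on local P2} and~\ref{prop on local P1 product}. Since $X=K_{\mathbb{P}^3}=\mathrm{Tot}_{\mathbb{P}^3}(\oO(-4))$ is the total space of the canonical bundle of the Fano $3$-fold $Y=\mathbb{P}^3$, push-forward along the zero section $i\colon\mathbb{P}^3\hookrightarrow X$ gives an isomorphism $i_\ast\colon M_1(\mathbb{P}^3,d)\xrightarrow{\ \cong\ }M_1(X,d)$ (as in \cite[Prop.~3.1]{CMT1}), and under the preferred orientation \eqref{prefer choice} one has $[M_1(X,d[\mathbb{P}^1])]^{\mathrm{vir}}=(-1)^{4d-1}[M_1(\mathbb{P}^3,d[\mathbb{P}^1])]^{\mathrm{vir}}=-[M_1(\mathbb{P}^3,d[\mathbb{P}^1])]^{\mathrm{vir}}$, where the latter is the virtual class of the perfect obstruction theory on $M_1(\mathbb{P}^3,d)$ whose obstruction bundle $\mathcal{V}$ has fibre $\Ext^2_{\mathbb{P}^3}(E,E)$ at $[E]$. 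Because $H^2(X)=\mathbb{Z}\langle H\rangle$ with $H$ the pullback of the hyperplane class, it suffices to check Conjecture~\ref{main conj} for $\alpha=aH$; then $\alpha\cdot\beta=ad$, $n_{0,d[\mathbb{P}^1]}(\alpha^2)=a^2\,n_{0,d}(H^2)$, and the right hand side of the conjecture equals
\begin{align*}
\frac{a}{2d}\,n_{0,d}(H^2)-\frac{a}{4d}\sum_{d_1+d_2=d}d_1d_2\,m_{d_1,d_2}-ad\sum_{k\mid d}\frac1k\,n_{1,d/k}.
\end{align*}

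\textbf{The right hand side.} I would take the genus $0$ and genus $1$ GV type invariants $n_{0,d}(H^2)$ and $n_{1,d}$ for $d\leqslant3$ from the local $\mathbb{P}^3$ tables in \cite{KP}, and compute the meeting invariants (for $d\leqslant3$ only $m_{1,1}$ and $m_{1,2}$ occur) from the recursion \eqref{meeting invs}. As in the two local surface cases, this requires passing to the compactification $\overline{X}=\mathbb{P}_{\mathbb{P}^3}(\oO(-4)\oplus\oO)$, with section class $T_1=[\iota(\mathbb{P}^3)]$, fibre class $T_2=[\pi^{-1}(\mathrm{pt})]$ and the class $T_3$ at infinity, so that $c_2(X)$ and the K\"unneth components $g^{ij}$ of the diagonal are available; using $n_{0,\beta}(T_1)=(T_1^2)\,n_{0,\beta}([\mathrm{pt}])$, $n_{0,\beta}(T_2)=n_{0,\beta}([\mathrm{pt}])$ and $n_{0,\beta}(T_3)=0$ one obtains the explicit numerical value of the right hand side for each $d\leqslant3$.

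\textbf{The left hand side for $d=1,2$.} For $d=1$, $M_1(\mathbb{P}^3,1)\cong\mathrm{Gr}(2,4)$ is the Grassmannian of lines; for $d=2$, planarity is forced by $\chi(\oO_C)=1$, so every sheaf is $\oO_C$ for a planar conic $C$ and $M_1(\mathbb{P}^3,2)$ is the smooth projective $\mathbb{P}^5$-bundle over $\check{\mathbb{P}}^3$ parametrizing planar conics in $\mathbb{P}^3$. In both cases a universal sheaf has a $K$-theory class obtained from a Beilinson-type resolution of the diagonal, exactly as in the proof of Proposition~\ref{prop on local P2}; I would normalize it to have push-forward with trivial determinant, isolate the part of $\ch_4(\mathbb{F}_{\mathrm{norm}})$ that pairs with $\pi_X^{\ast}H$, compute $\mathcal{V}$ (with fibre $\Ext^2_{\mathbb{P}^3}(E,E)$) from the same resolution, and evaluate
\begin{align*}
\langle\tau_1(aH)\rangle_{d[\mathbb{P}^1]}=-\int_{M_1(\mathbb{P}^3,d)}e(\mathcal{V})\cup\tau_1(aH)
\end{align*}
by classical intersection theory on $\mathrm{Gr}(2,4)$, respectively on the projective bundle over $\check{\mathbb{P}}^3$. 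Comparing with the right hand side finishes $d=1,2$.

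\textbf{The case $d=3$ and the main obstacle.} For $d=3$ a direct computation is awkward because $M_1(\mathbb{P}^3,3)$ is reducible: it has a component of twisted cubics (where $F=\oO_C$) and a component coming from plane cubic curves $C$ of arithmetic genus one (where $F$ is a rank-one degree-one torsion-free sheaf on $C$). Instead I would show that $F\mapsto(\oO_X\xrightarrow{\,s\,}F)$, with $s$ the (up to scale unique) nonzero section, identifies $M_1(X,3)$ with the moduli space $P_1(X,3)$ of PT stable pairs: for every stable $F$ one has $\dim\Hom(\oO_X,F)=1$ by purity together with a slope estimate, and the image of $s$ is $\oO_C$ on the degree-$3$ support curve $C$, so $\mathrm{coker}(s)$ is $0$-dimensional. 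Checking that this is an isomorphism of schemes compatible with the $\DT_4$ and PT virtual classes and orientations is the key step and the main obstacle. Granting it, $X=K_{\mathbb{P}^3}$ is a toric Calabi--Yau $4$-fold and $3[\mathbb{P}^1]$ is a toric class, so $\langle\tau_1(H)\rangle_{3[\mathbb{P}^1]}$ reduces by torus localization to a finite sum of contributions of torus-fixed stable pairs, computable with the $\mathrm{CY}_4$ stable pair vertex of \cite{CK2,CKM}; carrying out this equivariant computation (via S.~Monavari's program) gives the value matching the right hand side, and assembling the cases $d=1,2,3$ completes the proof.
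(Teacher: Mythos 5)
Your proposal follows essentially the same route as the paper: the same compactification and recursion for the meeting invariants, the same intersection-theoretic evaluation of $e(\mathcal{V})\cup\tau_1(\alpha)$ on the smooth moduli spaces for $d\leqslant 2$ (with the preferred orientation sign $(-1)^{4d-1}=-1$), and for $d=3$ the same identification $M_1(X,3)\cong P_1(X,3)$ via the Freiermuth--Trautmann description of stable sheaves on twisted and plane cubics, followed by the $\mathrm{CY}_4$ stable pair vertex computation. The only slip is notational: since $\overline{X}=\mathbb{P}(\oO(-4)\oplus\oO)$ is a $\mathbb{P}^1$-bundle over $\mathbb{P}^3$, the group $H^4(\overline{X},\mathbb{Z})$ has rank two with basis $T_1=[\iota(\mathbb{P}^2)]$ and $T_2=[\pi^{-1}(\mathbb{P}^1)]$, so there is no independent class $T_3$ at infinity and $T_1$ is not $[\iota(\mathbb{P}^3)]$ (which would be a divisor class).
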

\begin{proof}
We first consider $d=2$ case (same method applies to $d=1$ case). 
Consider a compactification of $X$: 
$$\overline{X}=\mathbb{P}(\oO_{\mathbb{P}^3}(-4) \oplus \oO_{\mathbb{P}^3})$$
with the projection $\pi:\overline{X}\to \mathbb{P}^3$ and a section $\iota: \mathbb{P}^3\to \overline{X}$. Then 
$$H^4(\overline{X},\mathbb{Z})=\mathbb{Z}\langle T_1,T_2 \rangle, $$ 
where $T_1=[\iota(\mathbb{P}^2)]$, $T_2=[\pi^{-1}(\mathbb{P}^1)]$. Their intersection matrix is 
\begin{align*}
g=\begin{pmatrix}
-4 & 1   \\
1 & 0 
\end{pmatrix}
\quad \mathrm{with}  \quad 
g^{-1}=\begin{pmatrix}
0 & 1   \\
1 & 4 
\end{pmatrix}.
\end{align*}
For any $\beta\in H_2(X)\subset H_2(\overline{X})$, we have 
$$n_{0,\beta}(T_1)=(T_1\cdot T_1) \,n_{0,\beta}\left([\mathbb{P}^1]\right), \quad 
n_{0,\beta}(T_2)=n_{0,\beta}\left([\mathbb{P}^1]\right). $$
By a direct calculation, we have 
$$c_2(X)=-10\in  \mathbb{Z}\cong H^4(X,\mathbb{Z}).$$
Then meeting invariants can be calculated as 
\begin{align*}
&m_{1,1}=n_{0,1}\left(c_2(X)\right)+\sum n_{0,1}(T_i)g^{ij}n_{0,1}(T_j)=-1400, 
\\
&m_{1,2}=m_{1,1}+\sum n_{0,1}(T_i)g^{ij}n_{0,2}(T_j)=-67000, 
\end{align*}
where we use (ref. \cite[Table 1, pp. 31]{KP}):
$$n_{0,1}\left([\mathbb{P}^1]\right)=-20, \quad n_{0,2}\left([\mathbb{P}^1]\right)=-820, \quad  
n_{0,3}\left([\mathbb{P}^1]\right)=-68060. $$
Hence for $\alpha=[\mathbb{P}^2]\in H^2(X)$, the RHS of Conjecture \ref{main conj} in this case is
\begin{align}
\label{-30}
\frac{1}{4}n_{0,2}([\mathbb{P}^1])-\frac{1}{8}m_{1,1}=-30. 
\end{align}
Next we compute the $\DT_4$ descendent invariant. 
It is easy to see that for
any one dimensional stable sheaf $F$ on $\mathbb{P}^3$
with $[F]=2[\mathbb{P}^1]$ and $\chi(F)=1$, 
there is an unique plane $H \subset \mathbb{P}^3$ and a conic 
$C \subset H$ such that $F \cong \oO_C$. 
Therefore $M_{1}(\mathbb{P}^3, 2)$ is a projective bundle
over $|\oO_{\mathbb{P}^3}(1)| \cong \mathbb{P}^3$ and we denote the projection by 
\begin{align*}
g \colon 
M_1(\mathbb{P}^3, 2) \cong \mathbb{P}_{|\oO_{\mathbb{P}^3}(1)|}
(\eE) \to 
|\oO_{\mathbb{P}^3}(1)|. 
\end{align*}
Here $\eE=f_{\ast}\oO_{\hH}(2)$ and 
$\hH$ is the universal hyperplanes in the following diagram 
\begin{align*}
\xymatrix{
\hH \ar@<-0.3ex>@{^{(}->}[r] \ar[d]_-{f} 
& \mathbb{P}^3 \times |\oO_{\mathbb{P}^3}(1)| \ar[ld] \\
|\oO_{\mathbb{P}^3}(1)|. & 
}
\end{align*}
Let $\cC$ be the relative universal conics
\begin{align*}
i \colon 
\cC \hookrightarrow \hH \times_{|\oO_{\mathbb{P}^3}(1)|} \mathbb{P}_{|\oO_{\mathbb{P}^3}(1)|}(\eE) \hookrightarrow \mathbb{P}^3 \times 
\mathbb{P}_{|\oO_{\mathbb{P}^3}(1)|}(\eE). 
\end{align*}
Then the normalized universal sheaf 
$\mathbb{F}_{\rm{norm}}$ is given by $i_{\ast}\oO_{\cC}$, which 
has the following $K$-theory class
\begin{align}\label{univ:K}
i_{\ast}\oO_{\cC}=[\oO]-[\oO(-1, -h_2)]-[\oO(-2, -h_1)]+[\oO(-3, -h_1-h_2)],
\end{align}
where $h_1=c_1(\oO_{g}(1))$ and $h_2=g^{\ast}\oO(1)$. 
A direct calculation shows 
the identity in $K$-theory
\begin{align*}
&\quad \,\, \dR \pi_{M\ast} \dR \hH om_{\mathbb{P}^3 \times \mathbb{P}_{|\oO_{\mathbb{P}^3}(1)|}(\eE)}(i_{\ast}\oO_{\cC}, i_{\ast}\oO_{\cC})[1]
\\
&=-4[\oO]+8[\oO(h_2)]+20[\oO(h_1)]-4[\oO(h_1-h_2)]-20[\oO(h_1+h_2)]. 
\end{align*}
Since $\mathbb{P}_{|\oO_{\mathbb{P}^3}(1)|}
(\eE)$ is smooth whose tangent bundle has the 
$K$-theory class:
\begin{align*}
T_{\mathbb{P}_{|\oO_{\mathbb{P}^3}(1)|}
(\eE)}=4[\oO(h_2)]+10[\oO(h_1)]-4[\oO(h_1-h_2)]-2[\oO],
\end{align*}
the obstruction bundle $\mathrm{Obs}$
for $\mathrm{DT}_3$ virtual class on $M_1(\mathbb{P}^3, 2)$
has the following $K$-theory class
\begin{align*}
\mathrm{Obs} &=T_{\mathbb{P}_{|\oO_{\mathbb{P}^3}(1)|}}
+\dR \pi_{M\ast} \dR \hH om_{\mathbb{P}^3 \times \mathbb{P}_{|\oO_{\mathbb{P}^3}(1)|}(\eE)}(i_{\ast}\oO_{\cC}, i_{\ast}\oO_{\cC})
-[\oO] \\
&=20 [\oO(h_1+h_2)]+[\oO] -10 [\oO(h_1)]-4 [\oO(h_2)]. 
\end{align*}
By taking the Euler class, the 
$\mathrm{DT}_3$ virtual class on $M_1(\mathbb{P}^3, 2)$
is given by 
\begin{align*}
[M_1(\mathbb{P}^3, 2)]^{\rm{vir}}=120 h_1^7+840 h_1^6 h_2+3080h_1^5 h_2^2+7700 h_1^4 h_2^3. 
\end{align*}
On the other hand for $\alpha=[\mathbb{P}^2]\in H^2(X)$, 
using (\ref{univ:K}), we easily compute
\begin{align*}
\pi_{M\ast}(\alpha \boxtimes \ch_4(i_{\ast}\oO_C))
=-\frac{1}{2}h_1. 
\end{align*}
Therefore
\begin{align*}
\int_{[M_1(\mathbb{P}^3, 2)]^{\rm{vir}}}
\tau_1(\alpha)
&=\left(120 h_1^7+840 h_1^6 h_2+3080h_1^5 h_2^2+7700 h_1^4 h_2^3\right) \cdot \left(-\frac{1}{2}h_1\right) \\
&=-60h_1^8-420h_1^7 h_2-1540h_1^6h_2^2-3850 h_1^5 h_2^3 \\
&=30. 
\end{align*}
Here we have used equalities
$h_1^8=-4$, $h_1^7 h_2=6$, $h_1^6 h_2^2=-4$ and $h_1^5 h_2^3=1$. 
By taking the sign $(-1)^{c_{1}(\mathbb{P}^3)\cdot \beta-1}=-1$ as the preferred choice of orientations \eqref{prefer choice}, 
we obtain a matching with (\ref{-30}). 

Finally when $d=3$, we consider the PT moduli space $P_1(X,3)$ of stable pairs 
$(F,s)$ on $X$, where $F$ is a pure 1-dimensional sheaf with $[F]=3[\mathbb{P}^1]$ and $\chi(F) =1$,  
$s \in H^0(F)$ is a section with at most 0-dimensional cokernel. 
If $F$ is supported on a twisted cubic $C$, then $F=\oO_C$ is a stable sheaf \cite[Lemma 3.1]{FT}. 
If $F$ is supported on a plane cubic $C$, then $F$ is a non-split extension of $\oO_C$ with the structure sheaf of one point, hence 
also stable. 

Therefore there is a well-defined forgetful map 
\begin{align*}
f \colon P_1(X, \beta)\to M_1(X, \beta), \quad (s \colon \oO_X\to F)
\mapsto F,  
\end{align*}
which is in fact an isomorphism (see e.g. \cite[Lemma 3.2]{FT}). Therefore our descendent $\DT_4$ invariants \eqref{des invs} can be calculated by similar descendent invariants on stable pair moduli spaces. In this case all stable pairs are scheme theoretically supported on 
the zero section $\mathbb{P}^3\subset X$ and the moduli space $P_1(X,3)$ has a natural orientation (up to an overall global sign) coming from $P_1(\mathbb{P}^3,3)$ (see e.g. \cite[Proposition 3.3]{CMT2}). 
Using the 4-fold stable pair vertex \cite{CK2, CKM}, one can explicitly compute the invariant and 
obtain\,\footnote{We are very grateful to Sergej Monavari for sharing his computation to us \cite{M}.}  
$\langle\tau_1([\mathbb{P}^2]) \rangle_{3}=-22610$. 

In this case, the RHS of Conjecture \ref{main conj} is 
\begin{align*}
&\quad \quad \frac{1}{6}n_{0,3}([\mathbb{P}^1])-\frac{1}{3}m_{1,2}-3\,n_{1,3}=\frac{-68060}{6}-\frac{1}{3}\times (-67000)-3\times 11200=-22610,
\end{align*}
which matches with our descendent invariant. Here we have used $n_{1,3}=11200$ 
from~\cite[Table 1]{KP}.
\end{proof}

\providecommand{\bysame}{\leavevmode\hbox to3em{\hrulefill}\thinspace}
\providecommand{\MR}{\relax\ifhmode\unskip\space\fi MR }
\providecommand{\MRhref}[2]{%
  \href{http://www.ams.org/mathscinet-getitem?mr=#1}{#2}}
\providecommand{\href}[2]{#2}

\end{document}